\documentclass[12pt]{amsart}
\usepackage{amsthm,amstext,amsmath,amscd,amssymb,latexsym,mathrsfs}

\usepackage{color}
\usepackage{amsfonts,array}
\usepackage{todonotes}
\usepackage{verbatim}
\usepackage{float}	
\usepackage{pgf,tikz}
\usepackage[toc,page]{appendix}
\usepackage[all,cmtip]{xy}
\usepackage{stmaryrd}
\usepackage{xparse}
\usepackage{listings}
\usepackage{enumitem}
\setlist{nosep, left=0pt}

\usepackage{tabularx,booktabs}

\usepackage{ragged2e} 
\usepackage{needspace}   
\usepackage{ltablex}   
\keepXColumns         
\usepackage{placeins} 

\newcolumntype{L}{>{\RaggedRight\arraybackslash}p{3.5cm}}
\newcolumntype{Y}{>{\RaggedRight\arraybackslash}X}

\usepackage{caption}
\usepackage{subcaption}
\usepackage[T1]{fontenc}
\usepackage{lmodern}
\usepackage{etoolbox}
\usepackage{xcolor}

\usepackage[shortalphabetic]{amsrefs}
\usepackage[all]{xy}
\usepackage{newtxtext}
\usepackage{newtxmath}
\usepackage{framed}
\usepackage{graphicx}
\usepackage{wasysym}
\usepackage{pdfpages}

\usepackage[thinlines,thiklines]{easybmat}

\usepackage{microtype} 

\usepackage{mathtools}  

\usepackage{bbm}
\usepackage[bookmarks=true, colorlinks=true, linkcolor=blue!50!black,
citecolor=GoetheBlue, urlcolor=orange!50!black, pdfencoding=unicode]{hyperref}
\usepackage[left=3.5cm,right=3.5cm,top=3cm,bottom=3cm,footskip=1cm
]{geometry}
\makeatletter
\def\thm@space@setup{%
  \thm@preskip=5pt \thm@postskip=5pt
}
\makeatother

\makeatletter
\renewcommand\@biblabel[1]{[#1]\quad}%

\@ifpackageloaded{amsrefs}{%
  \ifcsname BibLabel\endcsname
    \usepackage{etoolbox}
    \patchcmd{\BibLabel}{\hfill}{}{}{}%
    \renewcommand{\BibLabel}{%
      \ifcsname Hy@raisedlink\endcsname
        \Hy@raisedlink{\hyper@anchorstart{cite.\CurrentBib}\hyper@anchorend}%
        [\thebib]\quad
      \else
        [\thebib]\quad
      \fi
    }%
  \fi
}{}%
\makeatother

\newcommand{\restrict}{\,\rule[-5pt]{0.4pt}{12pt}\,{}}

\numberwithin{equation}{section}

\newcommand{\equ}{\ensuremath{\,=\,}}

\newcommand{\deq}{\ensuremath{\stackrel{\textrm{def}}{=}}}

\DeclareMathOperator{\length}{length}

\newcommand{\Ra}{`\ensuremath{\Rightarrow}'  }
\newcommand{\La}{`\ensuremath{\Leftarrow}'  }
\newcommand{\LI}{`\ensuremath{\subseteq}'  }
\newcommand{\RI}{`\ensuremath{\supseteq}'  }

\newcommand{\BC}{{\mathbb{C}}}

\newcommand{\BN}{{\mathbb{N}}}

\newcommand{\BP}{{\mathbb{P}}}
\newcommand{\BQ}{{\mathbb{Q}}}
\newcommand{\BR}{{\mathbb{R}}}

\newcommand{\BZ}{{\mathbb{Z}}}

\newcommand{\Fa}{{\mathfrak{a}}}

\newcommand{\Fp}{{\mathfrak{p}}}
\newcommand{\Fq}{{\mathfrak{q}}}

\newcommand{\CF}{{\mathcal F}}
\newcommand{\CG}{{\mathcal G}}
\newcommand{\CH}{{\mathcal H}}

\newcommand{\CL}{{\mathcal L}}
\newcommand{\CM}{{\mathcal M}}

\newcommand{\CO}{{\mathcal O}}

\newcommand{\CQ}{{\mathcal Q}}

\DeclareMathOperator{\Spec}{Spec}
\DeclareMathOperator{\Proj}{Proj}
\newcommand{\DSpec}{D\!-\!\Spec}

\DeclareMathOperator{\supp}{supp}

\DeclareMathOperator{\rank}{rank}

\DeclareMathOperator{\Pic}{Pic}
\DeclareMathOperator{\Cl}{Cl}

\DeclareMathOperator{\rad}{rad}

\DeclareMathOperator{\Cox}{Cox}

\DeclareMathOperator{\Rel}{Rel}

\newcommand{\Quot}{\mathop{\rm Quot}\nolimits}

\newcommand{\Gen}{{\rm Gen}}

\definecolor{GoetheBlue}{RGB}{0,97,143}
\usepackage[bookmarks=true, colorlinks=true, linkcolor=blue!50!black,
citecolor=GoetheBlue, urlcolor=orange!50!black, pdfencoding=unicode]{hyperref}
\usepackage[left=3.5cm,right=3.5cm,top=3cm,bottom=3cm,footskip=1cm
]{geometry}
\makeatletter
\def\thm@space@setup{%
  \thm@preskip=5pt \thm@postskip=5pt
}
\makeatother

\newtheorem*{theorem*}{Theorem}

\newtheorem{theorem}{Theorem}[section]
\newtheorem{lemma}[theorem]{Lemma}

\newtheorem{proposition}[theorem]{Proposition}
\newtheorem{corollary}[theorem]{Corollary} 

\theoremstyle{definition}
\newtheorem{definition}[theorem]{Definition}
\newtheorem{example}[theorem]{Example}

\newtheorem{remark}[theorem]{Remark}

\newtheorem{introthm}{Theorem}

\def\phi{\varphi}
\def\epsilon{\varepsilon}
\def\setminus{\smallsetminus}
\let\oldbullet\bullet
\def\bullet{{\mathchoice{\oldbullet}%
                        {\oldbullet}%
                        {\scriptscriptstyle\oldbullet}%
                        {\oldbullet}}}

\let\emptyset\varnothing

\title{Topological and scheme-theoretic properties of the $D$-graded Proj construction} 

\author{Felix G\"obler}

\AtBeginDocument{\def\MR#1{}} 

\begin{document}





\begin{abstract}
We generalize the topological description of the $\BN$-graded Proj construction to the multigraded Proj construction for factorially graded rings that are graded by finitely generated abelian groups $D$.
However, there is one big structural difference: While the classical description is given by the space of homogeneous prime ideals not containing the irrelevant ideal, we characterize the multigraded Proj setting using $D$-prime ideals, i.e.\ ideals that have the prime property, but only for homogeneous factorizations. In particular, we establish a multigraded version of the Nullstellensatz. \newline
Additionally, we present algebraic conditions for separability in terms of factorially graded rings, and observe that $\Proj^D(S)$ is not separated in many cases.

Finally, building on Mayeux--Riche's definition of Serre twists, we give a criterion for their freeness.
\end{abstract}
\maketitle

\tableofcontents


\section*{Introduction}
Projective space is often introduced as the set of `directions' in affine space. Points that differ only by a scalar are identified, and the familiar patchwork of affine charts is glued together along overlaps. 
That construction, however, is only one way to see the same object. Projective space also sits naturally as a topological subspace of the prime spectrum of a graded coordinate ring, and this topological viewpoint highlights how the global shape of a projective variety is encoded in the way its affine pieces meet.

When the grading becomes multi–dimensional (coming, for example, from divisor classes), the combinatorics of degrees enriches that glueing: affine charts can have different dimensions, overlaps can behave unexpectedly, and new phenomena such as persistent non-separatedness appear. 

In particular, the notion of `graded prime' no longer captures the correct geometry, as many graded primes either lie inside the irrelevant part or produce affine patches of the wrong dimension. Thus, the familiar correspondence between degree-zero localizations at homogeneous elements $f$ and their non-vanishing loci $D_+(f)$ in the graded Zariski topology can fail. This paper explains why those failures occur and settles a notion on graded ideals that recovers the classical correspondence.

Let us recall the construction of the scheme structure on the $\BN$-graded $\Proj$ construction via topological spaces (for example, \cite{Bosch} section 9.1). Starting point is a $\BN$-graded ring $S = \bigoplus_{n\ge 0} S_n$ with \emph{irrelevant ideal} $S_+ = \bigoplus_{n > 0} S_n$. The \emph{homogeneous prime spectrum} of $S$ is then defined as
\begin{align*}
    \Proj(S) \deq \{\Fp \in \Spec(S) \mid \Fp \text{ graded}, S_+ \subsetneq \Fp\}.
\end{align*}
Also, let $D_+(f) = \{\Fp \in \Proj(S) \mid f \not\in\Fp\}$ for homogeneous $f \in S$.
The idea is to equip $\Proj(S)$ with the scheme structure obtained from the local homeomorphisms
\begin{align*}
    \psi_f \colon D_+(f) \to \Spec(S_{(f)}), \Fp \mapsto \Fp S_f \cap S_{(f)}.
\end{align*}
Now, the description of $\psi_f$ and the corresponding interpretation of $\Proj^D(S)$ in terms of graded prime ideals have not yet been studied for general multigraded rings.
Therefore, we introduce the notion of $D$-prime ideals and prove that $\psi_f$ is also a homeomorphism in the $D$-graded Proj setting, if we assume $S$ to be factorially $D$-graded.

A homogeneous element $f \in S$ is called \emph{relevant} if the group of units $D^f$ of $S_f$ has finite index in $D$, and the \emph{irrelevant ideal} $S_+$ is defined to be the ideal generated by all relevant elements in $S$.
For flexibility, we might consider subsets of the `irrelevant ideal'. 
Therefore, we call a pair $(S, B)$ a \emph{conical ring} if $S$ is an arbitrary multigraded ring and $B \unlhd S$ is a subset $B \subseteq S_+$.

Then the \emph{$D$-graded Proj of $(S, B)$} is defined as 
\begin{align*}
    \Proj^D_B(S) \deq \bigcup_{f \in B \text{ relevant}} \Spec(S_{(f)}).
\end{align*}

In this setting, we proved the following statement.

\begin{introthm}[\autoref{thm:Proj_D=Proj^D}]
    Let $S$ be a factorially and effectively $D$-graded integral domain and $(S, B)$ a conical ring. Then we can equip its multihomogeneous $D$-prime spectrum 
    \begin{align*}
    \Proj_D^B(S) \deq \{\Fp \unlhd \mid \Fp \text{ is $D$-prime and } B\not\subset \Fp\}
\end{align*}
    with the structure of a scheme in such a way, that for relevant $f \in B$ the homeomorphisms $\psi_f\colon D_+(f) \to \Spec(S_{(f)})$ of topological spaces become isomorphisms of schemes and thus give rise to an affine open covering of $\Proj^B_D(S)$ by the sets $D_+(f)$. The resulting scheme is $\Proj^D_B(S)$, so we have an isomorphism $\Proj^D_B(S) \cong \Proj^B_D(S)$. Therefore, we also call $\Proj^D_B(S)$ the $\Proj$ scheme associated to $(S, B)$.
\end{introthm}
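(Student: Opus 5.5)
The proof follows the classical $\BN$-graded template (as in \cite{Bosch}, 9.1). We first show that each $\psi_f$ is a homeomorphism; the scheme structure then comes from gluing the affine schemes $\Spec(S_{(f)})$ along these charts. Throughout, an ideal $\Fp$ is $D$-prime if it is graded and $ab \in \Fp$ with $a,b$ homogeneous implies $a\in\Fp$ or $b\in\Fp$.

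\textbf{Step 1: $\psi_f$ is well defined.} For relevant $f\in B$ and a $D$-prime $\Fp$ with $f\notin\Fp$, we check that $\Fp S_f\cap S_{(f)}$ is a prime ideal of $S_{(f)}$. Take degree-zero fractions $a/f^k$ and $b/f^l$ whose product lies in $\Fp S_f$. Since $\Fp$ is graded, the $D$-prime property applies to the homogeneous numerators, and the product lying in $\Fp S_f$ forces $a\in\Fp$ or $b\in\Fp$.

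\textbf{Step 2: the inverse map.} For $\Fq\in\Spec(S_{(f)})$ we set
\begin{align*}
\phi_f(\Fq) \deq \bigl\langle g \text{ homogeneous} \bigm| g^m/f^n \in \Fq \text{ for some } m,n \text{ with } \deg(g^m)=n\deg(f) \bigr\rangle .
\end{align*}
The exponents $m$ and $n$ exist because $f$ is relevant: $D^f$ has finite index in $D$, so some power of each homogeneous $g$, possibly multiplied by a unit of $S_f$, has degree in $\BZ\deg f$. If the units of $S_f$ have degrees outside $\BZ\deg f$, we instead use them to normalize $g$ into degree zero.

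We then verify that $\phi_f(\Fq)$ is $D$-prime, avoids $f$, and that $\phi_f$ and $\psi_f$ are mutually inverse. This step is the \textbf{main obstacle}. The problem is that $\phi_f(\Fq)$ need not be a prime ideal in the ordinary sense, only $D$-prime, so closure under addition and the precise description of the homogeneous elements of $\phi_f(\Fq)$ both need care.

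Here I would use factoriality of the grading. Homogeneous elements of $S_f$ factor uniquely into homogeneous primes. Hence $g\in\phi_f(\Fq)$ can be tested on its homogeneous prime factors, and for those the test reduces to a radical-type condition in $S_{(f)}$. Effectiveness, together with $S$ being a domain, ensures that the saturation $\Fp S_f\cap S = \Fp$ holds for $D$-primes $\Fp$ not containing $f$. That gives $\phi_f\circ\psi_f=\mathrm{id}$. The identity $\psi_f\circ\phi_f=\mathrm{id}$ follows by clearing denominators.

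\textbf{Step 3: topology and compatibility.} Both maps preserve inclusions. Moreover, for homogeneous $g$ with $\deg g^m = n\deg f$,
\begin{align*}
\psi_f\bigl(D_+(fg)\bigr) = D\bigl(g^m/f^n\bigr),
\end{align*}
so $\psi_f$ is a homeomorphism onto $\Spec(S_{(f)})$. On overlaps there are canonical identifications $S_{(fg)}\cong (S_{(f)})_{g^m/f^n}$, which make the transition maps $\psi_g\circ\psi_f^{-1}$ isomorphisms of affine schemes over $D_+(fg)$. The cocycle condition holds because all the maps are localizations inside the common field $\Quot(S)$.

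\textbf{Step 4: covering and conclusion.} The sets $D_+(f)$ for relevant $f\in B$ cover $\Proj_D^B(S)$. Indeed, if $B\not\subset\Fp$, then some homogeneous element of $B$ lies outside $\Fp$, and since $B\subseteq S_+$ it is a product or sum of relevant elements; by $D$-primality one relevant factor lies outside $\Fp$. Gluing the affine schemes $\Spec(S_{(f)})$ along the identifications of Step 3 therefore gives a scheme structure on $\Proj_D^B(S)$ in which each $\psi_f$ is an isomorphism. Since $\Proj^D_B(S)$ is defined by exactly the same gluing data, the two schemes are isomorphic, $\Proj^D_B(S)\cong\Proj^B_D(S)$.
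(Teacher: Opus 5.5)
\emph{Genuine gap: Step 2 as written does not construct the inverse, and the tool you propose for the open step cannot close it.}

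Your displayed definition of $\phi_f(\Fq)$, and likewise the element $g^m/f^n$ in Step 3, only admits homogeneous $g$ with $\deg(g^m)\in\BZ\deg f$. For $\rank D\ge 2$ this subgroup has infinite index, so most $g$ never qualify and the construction fails. Take Example~\ref{ex:double_origin_C_grading} with $f=xz$ and $\Fq=(xy/z)=\psi_f((y))$. The admitted $g$ live in degrees $k(2,1)$, and those with $g/f^k\in\Fq$ are spanned by monomials divisible by $x^2y$. So your formula gives $(x^2y)$, which is neither $D$-prime nor equal to $(y)$. Similarly, for $g=yz$ the equation $m(1,2)=n(2,1)$ has no solution with $m,n>0$, so Step 3 has no element $g^m/f^n$ to localize at. The unit normalization you mention in passing must be built into the definition. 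Call a homogeneous $g\in S$ admissible if $g^m u\in\Fq$ for some $m\ge 1$ and some homogeneous unit $u\in S_f$ with $\deg(g^m u)=0$. Such $u$ exist by relevance. Admissibility does not depend on the choice of $m$ and $u$, since after raising to suitable powers two choices differ by a unit of $S_{(f)}$. With this definition, Step 3 becomes correct in the form $S_{(fg)}=(S_{(f)})_{g^m u}$ and $\psi_f(D_+(fg))=D(g^m u)$.

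The genuine gap is the step you yourself flag: you must show that the admissible elements of a fixed degree are closed under addition, i.e.\ that the homogeneous elements of the ideal they generate are exactly the admissible ones. You need this both for $D$-primality and for $\psi_f(\phi_f(\Fq))\subseteq\Fq$. Factoriality cannot provide it, since unique factorization of homogeneous elements is purely multiplicative and says nothing about $g+g'$. The missing idea is the binomial trick, as in step (a) of Proposition~\ref{cor:psi_f_surj}. Suppose $g,g'\in S_e$ with $g^m u,\,g'^m u\in\Fq$. Each term $\binom{2m}{i}g^i g'^{2m-i}u^2$ of $\bigl((g+g')^m u\bigr)^2$ is $g^m u$ or $g'^m u$ times a degree-zero element of $S_f$, hence lies in $\Fq$. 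Primality then gives $(g+g')^m u\in\Fq$. Stability under homogeneous multiples and $D$-primality follow from primality of $\Fq$ alone: choose $m$ with $m\deg x,\,m\deg y\in D^f$ and write $(xy)^m u=(x^m u_x)(y^m u_y)\cdot v$ with $v$ a unit of $S_{(f)}$. So factoriality is not needed anywhere in the bijection.

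Two smaller points. First, the saturation $\Fp S_f\cap S=\Fp$ follows from gradedness and $D$-primality, because the homogeneous components of $f^n g$ are the $f^n g_e$; effectiveness plays no role. Second, in Step 4 the relevant $f_i\notin\Fp$ you find need not lie in $B$. Replace it by $bf_i$: it lies in $B$, avoids $\Fp$, and is relevant because $S_{bf_i}=(S_{f_i})_b$ gives $D^{bf_i}\supseteq D^{f_i}$.
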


Regarding the separatedness of the Proj construction, there is only a single criterion for detecting large open separated subsets known so far (see \cite{BS}, Proposition 3.3).
We establish new separation criteria in terms of relevant elements (cf.\  Theorem~\ref{thm:lin_dep_not_sep}).
In fact, the only necessary property for $\Proj^D(S)$ to be separated, is that all multiplication maps $\mu_{(fg)}\colon S_{(f)} \otimes S_{(g)} \to S_{(fg)}$ for relevant $f, g \in S$ are surjective.
We translate the surjectivity of $\mu_{(fg)}$ into an algebraic condition for relevant $f$ and $g$ (see Definition~\ref{def:weak} and Proposition~\ref{prop:surj_cond}).

Ultimately, we show that the full Proj construction is not separated in most cases. The reason is that one can typically find a pair $(f, g)$ satisfying the abovementioned algebraic condition, if the grading is torsion-free and $\rank(D) > 1$. 
The existence of such pairs is closely tied to the equations in $D$ that arise from the $D$-graded ring $S$. 
The non-separatedness is mostly induced by the new phenomena in the $D$-graded setting that homogeneous elements can have linearly independent degrees.

\begin{introthm}[\autoref{thm:lin_dep_not_sep}]
Let $S$ be a factorially and effectively $D$-graded integral domain. Then the following holds true:
\begin{enumerate}
    \item We say that there are only linear dependencies of length $1$, if every linear dependency in $D_\BR$ is of type $\deg(g^k) = \deg(f^l)$ for homogeneous $f, g\in S$. In this case, $\Proj^D(S)$ is separated.

    \item We say that there exists at least one non-trivial irreducible dependency if there is at least one linear dependency that is not of length $1$. Then $\Proj^D(S)$ is not separated.
\end{enumerate}
\end{introthm}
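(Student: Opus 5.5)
The plan is to reduce separatedness of $\Proj^D(S)$ to the surjectivity of the multiplication maps
\begin{align*}
\mu_{(fg)}\colon S_{(f)} \otimes S_{(g)} \to S_{(fg)}
\end{align*}
for relevant $f,g$. Since the charts $D_+(f)=\Spec(S_{(f)})$ cover $\Proj^D(S)$ and $D_+(f)\cap D_+(g)=D_+(fg)$ (using $D$-primality and the homeomorphism $\psi_f$ of Theorem~\ref{thm:Proj_D=Proj^D}), the standard affine criterion applies: the scheme is separated if and only if every $\mu_{(fg)}$ is surjective. By Proposition~\ref{prop:surj_cond}, this surjectivity is equivalent to the algebraic condition of Definition~\ref{def:weak} on the pair $(f,g)$. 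So both parts become statements about when this condition holds or fails. Factoriality of the grading lets me reduce to homogeneous prime elements and their degrees, because every homogeneous element of $S_{fg}$ factors into $D$-primes.

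For (1), take relevant $f,g$ and a degree-zero fraction $h/(f^ag^b)$. I will write $h$ as a product of homogeneous primes and try to split it as $h_1h_2$ with $\deg h_1 \in \BZ_{\ge0}\deg f$ and $\deg h_2\in \BZ_{\ge 0}\deg g$ modulo the relation $\deg h = a\deg f+b\deg g$. The point of the hypothesis is this: every relation in $D_\BR$ among the degrees involved has length $1$, i.e.\ it is of the form $\deg(p^k)=\deg(q^l)$. Decomposing the relation $\deg h = a\deg f + b\deg g$ into irreducible dependencies therefore expresses each prime factor's degree as a rational multiple of $\deg f$ or $\deg g$. Multiplying numerator and denominator by suitable powers of $f,g$ clears the torsion and the denominators, since relevance gives finite index. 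The result writes the fraction as a product of an element of $S_{(f)}$ and an element of $S_{(g)}$, which gives surjectivity.

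For (2), I take a non-trivial irreducible dependency $\sum_i k_i\deg(p_i)=\sum_j l_j\deg(q_j)$ of length at least $2$. From it I will build relevant $f,g$, for instance $f=\prod p_i\cdot r$ and $g=\prod q_j\cdot r$, with $r$ a product of primes chosen to make both relevant (effectiveness is used here). I then exhibit a degree-zero element of $S_{(fg)}$, such as $\prod p_i^{k_i}/\prod q_j^{l_j}$ suitably adjusted, that is not in the image. If it were in the image, comparing prime factorizations in the factorial grading would force a splitting of the dependency into sub-dependencies of length $1$, contradicting irreducibility. Hence the condition of Definition~\ref{def:weak} fails, and $\Proj^D(S)$ is not separated.

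The main obstacle is this last step. I must ensure that $f,g$ can be chosen relevant without introducing new dependencies that allow a splitting. I must also make the factorization comparison rigorous: unique factorization holds only up to homogeneous units, and degrees can carry torsion, which is why I would pass to $D_\BR$.
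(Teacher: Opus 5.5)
Your reduction (separatedness of $\Proj^D(S)$ $\Leftrightarrow$ surjectivity of every $\mu_{(fg)}$ $\Leftrightarrow$ absence of weak pairs, via Proposition~\ref{prop:surj_cond}) matches the paper. The construction in part (2), however, does not work, and the problem is not a missing detail. With $f=\prod_i p_i\cdot r$ and $g=\prod_j q_j\cdot r$, each side of the dependency divides one of $f$, $g$. So your candidate $\prod_i p_i^{k_i}/\prod_j q_j^{l_j}$ has a denominator dividing a power of $g$, and it already lies in $S_{(g)}$ (its inverse lies in $S_{(f)}$). It is therefore trivially in the image of $\mu_{(fg)}$.

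Worse, this design need not produce any non-surjective pair at all. Take $S=\BC[x,y,z]$ with $\deg x=e_1$, $\deg y=e_2$, $\deg z=e_1+e_2$ and the dependency $\deg(xy)=\deg(z)$. Relevance of $g=z\cdot r$ forces $x\mid r$ or $y\mid r$. Then $D_+(f)\cup D_+(g)$ lies in $D_+(xy)\cup D_+(xz)$ or in $D_+(xy)\cup D_+(yz)$, each of which is isomorphic to $\BP^1_\BC$. Hence $D_+(f)\cup D_+(g)$ is separated, and $\mu_{(fg)}$ is surjective for every choice of $r$ and every ``adjustment''.

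The missing idea is that a witness needs a degree-zero generator whose denominator divides $fg$ but no power of $f$ and no power of $g$. So the long side of the dependency must be \emph{split} between $f$ and $g$, not kept whole inside one of them. This is the paper's Proposition~\ref{prop:weak_cond}. For $(h,s)$ non-trivial irreducible dependent with $\length(h)>1$, distribute the prime factors of $h$ between $f'$ and $g'$, so that $h\mid f'g'$ but $h$ divides no power of $f'$ or of $g'$. Then place the other side in both factors: $(f,g)=(f's,g's)$. Now $s/h\in S_{(fg)}$, and condition (ii) of irreducibility is what rules out factoring it into degree-zero fractions with denominators coming from $f$ and from $g$. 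So $(f,g)$ is weak, and Proposition~\ref{prop:surj_cond} gives non-surjectivity. In the example this is the pair $(xz,yz)$, with $z/(xy)\notin\BC[\tfrac{xy}{z}]$, the image of $\mu_{(xz\cdot yz)}$ (Example~\ref{ex:double_origin_sep}).

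A smaller point in (1): your claim that each prime factor of the numerator gets a degree that is a rational multiple of $\deg f$ or $\deg g$ is false once $f$ involves primes of independent degrees. For example, take $f=g=xy$ with $\deg x=e_1$, $\deg y=e_2$. The length-$1$ hypothesis only makes each such degree proportional to the degree of some prime factor of $f$ or of $g$. You then have to regroup the factors accordingly, which is what the paper's ``no weak pair'' argument encodes.
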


In the $\BN$-graded case, morphisms $f\colon T \to \BP^n$ are given in terms of isomorphism classes of pairs $(\CL, \varphi)$, where
\begin{align*}
  \varphi\colon \CO_T^{n+1} \to \CL
\end{align*}
is an epimorphism, $T$ is any scheme and $\CL$ an ample line bundle. However, it seems that there is no generalization to that in the general multigraded case.
The natural generalization would be studying twisting sheaves of $\Proj^D(S)$ for polynomial rings $S$ and ample families, collections $\CL_1, \ldots, \CL_r$ of invertible sheaves, such that $\CL^d = \CL_1^{d_1} \otimes \ldots\otimes \CL_r^{d_r}$, $d \in \BZ^r$, satisfies certain properties, generalizing the properties of ample line bundles.

There is no natural ample family that recovers the $D$-graded Proj in general, because constructing one requires a choice of a basis of $\Pic(X)$, which is not canonical.
Concretely, this is since $\Cl(X)$ may contain torsion or has rank greater than one.

Finally, we were able to give a criterion for the freeness of twisting sheaves $\CO_{\Proj^D(S)}(d)$ in terms of the finite index subgroups $D^f$ arising from relevant $f \in S$.

Concretely, let $S$ be an effectively $D$-graded integral domain, $d\in D$ and $(f_i)_{i\in I}$ a system of generators for the irrelevant ideal $S_+$. Then $\CO_X(d)$ is free if and only if $d \in \bigcap_{i \in I} D^{f_i}$ (see Corollary~\ref{thm:O(d)_free}).


{\normalfont\bfseries Organization of the article.}
In Section~\ref{sec:mult_ring}, we quickly recall the definition and some properties of the $D$-graded Proj construction. 
Section~\ref{sec:mult_spectra} deals with the generalization of the classical Proj description in terms of $D$-prime ideals and the homogeneous Zariski topology.
In Section~\ref{sec:sep_crit} we present several conditions implying non-separatedness of the Proj construction.
Finally, in Section~\ref{subsec:sheaves}, we recall the construction of twisting sheaves for multigraded rings in the lines of \cite{May} and prove a condition for freeness.\\


{\normalfont\bfseries Acknowledgements.}
Again, I would like to thank my (former) supervisor, Alex Küronya, for his tremendous support and insightful feedback.
I am also very grateful to Johannes Horn, Andrés Jaramillo Puentes, Kevin Kühn, Jakob Stix, Martin Ulirsch, and Stefano Urbinati for many helpful discussions and valuable comments.

Partially funded by the Deutsche Forschungsgemeinschaft (DFG, German Research Foundation) TRR 326 \textit{Geometry and Arithmetic of Uniformized Structures}, project number 444845124, and by the LOEWE grant \emph{Uniformized Structures in Algebra and Geometry}.













\section{$D$-graded Proj}\label{sec:mult_ring}
We give a quick overview of the $D$-graded Proj construction. It was first described by \cite{BS} in 2000, and further studied by \cite{KU}, \cite{KSU}, and \cite{May}. For more details on the $D$-graded Proj construction, such as quotient presentations (in the sense of GIT) and stability conditions, we refer to our previous work \cite{paper1}, \cite{paper2}.

Overall, we work with multigraded rings, i.e.\ commutative unital rings that are graded by a finitely generated abelian group. All multigraded rings are assumed to be effectively graded, which means that the grading group $D$ is generated by its support, i.e.\ $D = ( \{d\in D\mid S_d \neq 0\})$. This way, the relevant locus of $S$ is uniquely determined (cf.\ \cite{paper1}, Example 1.12).

We aim to generalize the classical topological description of Proj (cf.\ \cite{Bosch}, §9.1) in Chapter~\ref{sec:mult_spectra}. But in general, we will need to impose several assumptions on $S$.

\begin{definition}\label{def:fact_graded}
    Let $D$ be a finitely generated abelian group and let $S$ be a $D$-graded integral domain. We denote the set of homogeneous ring elements in $S$ by $h(S)$.
    \begin{enumerate}[label=(\arabic*)]
        \item A nonzero nonunit $f \in h(S)$ is a \emph{$D$-prime element} if for all $g, h \in h(S)$ satisfying $f \mid gh$, either $f \mid g$ or $f \mid h$.
        \item Consequently, a graded ideal $\Fp \unlhd S$ is called a \emph{$D$-prime} ideal, if for all homogeneous $f, g \in S$ satisfying $fg \in \Fp$ either $f \in \Fp$ or $g \in \Fp$.
        \item We say that $S$ is \emph{factorially $D$-graded} if every nonzero nonunit $f \in h(S)$ is a product of $D$-prime elements.
    \end{enumerate}
\end{definition}

Recall that being prime and $D$-prime is not equivalent for general $D$-gradings. For example, for $S = \BC[x]$ and $D=\BZ/2\BZ$, where $\deg(x) = 1_D$, the ideal $(x^2-1)$ is $D$-prime but not prime (cf.\ \cite{paper1}, Example 1.7).

The most important notion is that of relevant elements in multigraded rings.
\begin{definition}
    A multigraded ring $S$ is called \emph{periodic} if the degrees of the homogeneous units $f \in S^\times \cap h(S)$ form a finite index subgroup of the grading group.
An element $f \in S$ is called \emph{relevant} if
    \begin{enumerate}[label=(\roman*)]
        \item $f$ is homogeneous and
        \item the localization $S_f$ is \emph{periodic}.
    \end{enumerate}
 We denote the set of relevant elements of $S$ by $\Rel^D(S)$.
For homogeneous $f \in h(S)$, we define the \emph{support group} of homogeneous units in $S_f$ to be
\begin{align*}
    D^f \deq \{\deg_D(g) \mid g \in (S_f)^\times \cap h(S_f)\}.
\end{align*}
Finally, we define the \emph{irrelevant ideal $S_+$} of $S$ to be the ideal generated by all relevant elements of $S$, i.e.\ $S_+ = (\Rel^D(S))$. 
\end{definition}

If $S$ is factorially graded and noetherian, we can choose a minimal generating system of $S$ by $D$-prime elements and hence construct minimal generators for $S_+$ via monomials in those $D$-prime elements. We denote a corresponding minimal generating set for $S_+$ by $\Gen^D(S)$. If $S$ is a polynomial ring, we say that $f \in S$ homogeneous is \emph{monomic relevant} if $f$ is a product of exactly $r$ distinct variables of $S$, such that $D^f$ has finite index in $D$, $f$ is square-free, and its coefficient is $1_S$. In this case, $\Gen^D(S)$ is given in terms of monomic relevant elements (cf. \cite{paper1}, Lemma 1.20).

There are many useful criteria for relevance (see \cite{paper1}, Lemma 1.15). However, we make heavy use of the following conclusion when proving surjectivity of $\psi_f$. The proof can be found in \cite{paper1}, Corollary 1.17.

\begin{corollary}\label{cor:rel_many_el_deg_zero}
    Let $h \in h(S)$ be homogeneous and $f \in S$ relevant. Then there exists an element $g_h \in h(S)$, an element $k \in \BZ$ and an element $N > 0$ such that
    \begin{align*}
        \deg\left(\frac{h^N g_h}{f^k}\right) \equ 0 .
    \end{align*}
\end{corollary}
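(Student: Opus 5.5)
Since $f$ is relevant, $S_f$ is periodic, so the subgroup $D^f \subseteq D$ has finite index. I would use only this fact and elementary group theory. Put $m \deq [D : D^f]$. Let $d \deq \deg(h) \in D$. Because $D/D^f$ is a finite group of order $m$, we have $m \cdot d \in D^f$. Take $N \deq m > 0$. Then $\deg(h^N) = N d$ lies in $D^f$.

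The next step is to unpack membership in $D^f$. By definition, $Nd = \deg(u)$ for some homogeneous unit $u \in (S_f)^\times \cap h(S_f)$. Every element of $S_f$ has the form $a/f^j$ with $a \in S$ and $j \ge 0$. Since $u$ is homogeneous, we may take $a \in h(S)$. This holds because homogeneous elements of the localization at a homogeneous element are exactly the fractions $a/f^j$ with $a$ homogeneous; the fraction is homogeneous of degree $\deg a - j\deg f$. The inverse $u^{-1}$ is then also a homogeneous unit, of degree $-Nd$. Write $u^{-1} = g_h / f^{k}$ with $g_h \in h(S)$ and $k \ge 0$.

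To conclude, compute
\begin{align*}
    \deg\left(\frac{h^N g_h}{f^k}\right) = Nd + \deg(g_h) - k\deg(f) = Nd + \deg(u^{-1}) = Nd - Nd = 0,
\end{align*}
which is the desired identity with $k \in \BZ$ and $N > 0$.

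The only point needing care is the representation of a homogeneous unit of $S_f$ as $g/f^k$ with $g$ homogeneous in $S$. I would justify it by reducing to the case $u \neq 0$, which is automatic for a unit. Then write $u = a/f^j$ and decompose $a = \sum_e a_e$ into homogeneous components. Homogeneity of $u$ forces $a_e/f^j = 0$ in $S_f$ for all but one degree $e$. So one can replace $a$ by the single component $a_{e_0}$, after possibly multiplying by a power of $f$ to kill the other components. The case where $S_f$ is the zero ring is degenerate; there the degree of $0$ is arbitrary and the statement holds trivially with $g_h = 0$, or it is excluded if $S$ is a domain. I expect no genuine obstacle beyond this bookkeeping.
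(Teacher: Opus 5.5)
Your proof is correct and is essentially the argument the paper relies on. The paper defers the proof to Corollary 1.17 of its earlier work and sketches the same idea in step (b) of the proof of Proposition~\ref{cor:psi_f_surj}: finite index of $D^f$ yields $N>0$ with $N\deg(h)\in D^f$, and a homogeneous unit of $S_f$ of degree $-N\deg(h)$, written as $g_h/f^k$ with $g_h$ homogeneous, gives the required fraction. The only difference is cosmetic: the paper phrases the last step via homogeneous divisors of powers of $f$, whereas you use the inverse unit directly.
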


Now $\Proj^D(S)$ is defined to be the space obtained from gluing all the degree-zero localizations of the correct dimension (cf.\ \cite{paper1}, Lemma 2.18) inside a quotient space $Q(S)$ that exists in the category of ringed spaces. We showed that, on basic affine opens, the structure sheaf of $\Proj^D(S)$ coincides with the restriction of the structure sheaf of $Q(S)$. This is the very reason why we can glue the collection of all $\Spec(S_{(f)})$ for relevant $f\in S$ `inside' $Q(S)$, giving rise to the structure of a scheme. More details on why this embedding works can be found in \cite{paper1}, Section 1.3.

\begin{definition}[Multigraded spectrum of a multigraded ring]\label{def:multihom_spec}\makebox{}{}\\
The \emph{multigraded spectrum of $S$} is defined as the scheme
\begin{align*}
    \Proj^D(S) \deq \bigcup_{f \in \Rel^D(S)} \Spec(S_{(f)}) \subseteq \CQ(S) .
\end{align*}
\end{definition}

\begin{example}\label{ex:torsion_2Z_1st}
   Let $D= \BZ/2\BZ$ and $S = \BC[x]$ graded by $\deg(x) = 1_D$. Then, as $D$ is finite, any homogeneous element in $S$ is relevant. In particular, $1 \in S$ is relevant and $\Proj^D(S) = \Spec(S_0) = \Spec(\BC[x^2])$.
\end{example}


\section{Multihomogeneous \texorpdfstring{$D$}{D}-prime Spectra}\label{sec:mult_spectra}
Let $D$ be a finitely generated abelian group of rank $r$ and $S$ be an effectively $D$-graded commutative ring.
Generalizing the construction of projective space, we attached a scheme, namely the Brenner--Schröer Proj, to a multigraded ring, so that it is compatible with the GIT quotient obtained from the natural action of $G = \Spec(S_0[D])$ on $\Spec(S)\setminus V(S_+)$ (cf. \cite{paper1}, Proposition 2.13).

In this chapter, we want to give the generalization of the Proj construction via topology along the lines of \cite{Bosch}, §9.1, to factorially $D$-graded integral domains.
We start with an example that indicates that $f$ being relevant could be equivalent to 
\begin{align*}
    \psi_f\colon D_+(f) \to \Spec(S_{(f)}),\ \Fp \mapsto \Fp S_f \cap S_{(f)}
\end{align*}
being a homeomorphism.  

\begin{example}\label{ex:not_rel_no_homeo}
Let $S = \BC[x, y, z]$, with grading $x\mapsto e_1$, $y\mapsto e_2$, $z \mapsto e_1+e_2$.
    It holds that
    \begin{align*}
        S_{(xy)} \equ \BC[\frac{z}{xy}] , \ 
        S_{(xz)} \equ \BC[\frac{xy}{z}] \equ S_{(yz)}, 
    \end{align*}
    and all intersections coincide:
    \begin{align*}
        S_{(x^2yz)} \equ S_{(xy^2z)} \equ S_{(xyz^2)} \equ \BC[\frac{xy}{z}, \frac{z}{xy}].
    \end{align*}
    On the other side, the grading is torsion-free, so that the graded prime ideals not containing $S_+ = (xy, xz, yz)$ are given by
    \begin{align*}
       \Proj_D(S) \deq \{(x), (y), (z), (0), (\alpha z+ xy) \mid \alpha \in \BC^\times\}.
    \end{align*}
    Note that $(x,y), (x,z)$ and $(y, z)$ each contains $S_+$, so they do not contribute to $\Proj_D(S)$.
With notation as in \cite{Bosch}, Proposition 9.1/12 (i.e. $D_+(f) = \{\Fp \in \Spec(S) \mid S_+ \not\subset \Fp \text{ and } \Fp \text{ is a graded prime ideal}\}$) we compute
\begin{align*}
    D_+(xy) &\equ \{ (0), (z), (xy-z), (z-xy), \ldots\} \\
    D_+(xz) &\equ \{ (0), (y), (xy-z), (z-xy), \ldots\} \\
    D_+(yz) &\equ \{ (0), (x), (xy-z), (z-xy), \ldots\} \\
    D_+(xyz) &\equ \{(0), (xy-z), (z-xy), \ldots\},
\end{align*}
so $\psi_f$ is a homeomorphism for $f$ monomic relevant. 
Note that $S_{(xz)} = S_{(yz)} = S_{(z)}$, so we might say $z$ is relevant. But
\begin{align*}
    D_+(z) \equ \{(0), (x), (y), (xy-z), (z-xy), \ldots\}
\end{align*}
and therefore $\psi_z(x) = \psi_z(y) = (\frac{xy}{z}) \in \Spec(S_{(z)}) \subseteq \Proj^D(S)$. In particular, $\psi_z$ is not a homeomorphism.
\end{example}

In general, $\psi_f$ need not be a homeomorphism in the Zariski topology. This phenomenon was already noted in \cite{BS}, Remark 2.3. We now provide an example illustrating this:

\begin{example}\label{ex:D_prime_homeo}
    Let $S = \BC[x]$ and $D=\BZ/2\BZ$, where $\deg(x) = 1_D$. As $1$ is relevant, we want $\psi_1 \colon D_+(1) \to \Spec(S_{(1)}) = \Spec(S_0)$ to be a homomorphism. But the graded prime ideals of $S$ are given by $(0)$ and $(x)$. Hence $x^2\pm 1 \in \Spec(S_0)$ does not have a preimage. It is immediate that $\psi_1$ becomes a homeomorphism if we define $D_+(1)$ to be the set of all $D$-prime ideals of $S$, as $x^2\pm 1 \in D_+(1)$ in this case.
\end{example}


\subsection{Homogeneous Zariski Topology}\label{sec:D_Spec}

Motivated by Example~\ref{ex:D_prime_homeo}, we define a new topology that allows us to generalize \cite{Bosch}, Proposition 9.1/13 (also see \cite{EGA2}, Proposition (2.3.6)).
To the best of our knowledge, this point of view has not been addressed in the literature yet.

\begin{definition}\label{def:D_prime_spec}
    Let $S = \bigoplus_{d\in D}S_d$ be a multigraded ring. Then
    \begin{align*}
        \DSpec(S) \deq \{\Fp \unlhd S \mid \Fp \text{ is $D$-prime}\}
    \end{align*}
    is called the \emph{$D$-prime spectrum} of $S$ and
    \begin{align*}
        \Proj_D(S) \deq \{\Fp \in \DSpec(S) \mid S_+ \not\subset \Fp\}
    \end{align*}
    is called the \emph{multihomogeneous $D$-prime spectrum} of $S$.
    For a homogeneous subset $E \subseteq S$ (or the graded ideal $\Fa \unlhd S$ generated by $E$) we define
\begin{align*}
    V_+(E) \deq \{\Fp \in \Proj_D(S) \mid E \subseteq \Fp\}.
\end{align*}
\end{definition}

\begin{remark}\label{lem:DSpec_functorial}
The $D$-prime spectrum still satisfies the necessary functorial properties: 

Let $\varphi \colon R\to S$ be a morphism of multigraded rings and $\Fp \unlhd S$ a $D_S$-prime ideal. Then $\varphi^{-1}(\Fp) \unlhd R$ is $D_R$-prime.
\end{remark}


\begin{proposition}\label{prop:hom_top}
    By defining the sets of type $V_+(E)$ for $E \subseteq S$ homogeneous (and radical, if $D$ has torsion) as closed subsets of $X = \Proj_D(S)$, we get a topology on $X$, called the \emph{homogeneous Zariski topology}. Concretely, the following properties hold true for homogeneous subsets $E, E' \subseteq S$ and families of homogeneous subsets $(E_\lambda)_{\lambda \in \Lambda}$ respectively:
    \begin{enumerate}[label=(\roman*)]
        \item $V_+(0) \equ \Proj_D(S)$, $V(1) \equ \emptyset$.
    
        \item If $E \subseteq E'$, then $V_+(E') \subseteq V_+(E)$. 
        \item $V_+(\bigcup_{\lambda \in \Lambda} E_\lambda) \equ V_+(\sum_{\lambda \in \Lambda} E_\lambda) \equ \bigcap_{\lambda \in \Lambda} V_+(E_\lambda)$.
        
        \item $V_+(EE') \equ V_+(E) \cup V_+(E')$.
        \item $V_+(E) \equ V_+(\rad(E))$.
    \end{enumerate}
    In particular, the sets of type 
    \begin{align*}
        D_+(f) \deq \{\Fp \in \Proj_D(S) \mid f \not\in\Fp\}
    \end{align*}
    for relevant $f \in S$ form a basis of the topology on $X$. 
\end{proposition}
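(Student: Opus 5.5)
The plan is to check (i)–(v) directly from the definition of $D$-prime ideals, then deduce the topology and the basis statement. Items (i) and (ii) are immediate. Since $V_+(1)=\emptyset$, no $D$-prime ideal contains $1$; by convention $D$-prime ideals are proper. For (iii), note that a graded ideal contains $\bigcup_\lambda E_\lambda$ if and only if it contains $\sum_\lambda E_\lambda$, if and only if it contains every $E_\lambda$. This needs nothing about primality, so arbitrary intersections of closed sets are closed.

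For (iv), the inclusion $V_+(E)\cup V_+(E')\subseteq V_+(EE')$ is clear. For the converse, let $\Fp\supseteq EE'$ with $E\not\subseteq\Fp$. Pick a homogeneous $f\in E\setminus\Fp$. For each homogeneous $g\in E'$ we have $fg\in\Fp$ with $f,g$ homogeneous, so the $D$-prime property gives $g\in\Fp$. Hence $E'\subseteq\Fp$. The essential point is that $E$ and $E'$ consist of homogeneous elements, so the $D$-prime condition applies exactly. Combined with (iii), this shows finite unions of closed sets are closed.

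The step I expect to be the main obstacle is (v). Here $\rad(E)$ means the radical of the ideal generated by $E$; I will restrict to the homogeneous part of that radical, or to $E$ already radical as the statement allows when $D$ has torsion. The inclusion $V_+(\rad E)\subseteq V_+(E)$ is trivial. For the converse, take $\Fp\supseteq E$ and a homogeneous $h$ with $h^n\in(E)\subseteq\Fp$. Induction using the $D$-prime property on the homogeneous factors $h\cdot h^{n-1}$ gives $h\in\Fp$. So $\Fp$ contains all homogeneous elements of $\rad(E)$. When $D$ is torsion-free, the radical of a graded ideal is graded, so this yields $\rad(E)\subseteq\Fp$. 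When $D$ has torsion, radicals of graded ideals need not be graded, and a $D$-prime ideal need not contain non-homogeneous radical elements (compare Example~\ref{ex:D_prime_homeo}). This is exactly why the statement adds the radicality hypothesis, so I will only claim (v) for the homogeneous part of the radical, or under that hypothesis.

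Finally, for the basis claim, write a closed set $V_+(E)$ via (iii) as $\bigcap_{h\in E}V_+(h)$, so its complement is $\bigcup_{h\in E}D_+(h)$ with $h$ homogeneous. Given $\Fp\in D_+(h)$, choose a relevant $f$ with $f\notin\Fp$. Such an $f$ exists: $S_+\not\subseteq\Fp$ and $S_+$ is generated by relevant elements. Since $\Fp$ is $D$-prime, $hf\notin\Fp$, so $\Fp\in D_+(hf)\subseteq D_+(h)$. It remains to show $hf$ is relevant. The homogeneous unit $f$ of $S_f$ stays a unit in $S_{hf}$, and so do all homogeneous units of $S_f$, because $S_{hf}$ is a further localization of $S_f$. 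Hence $D^f\subseteq D^{hf}$, and $D^{hf}$ has finite index in $D$. Thus the sets $D_+(f)$ with $f$ relevant cover each open set pointwise, so they form a basis. They are also closed under finite intersections, since $D_+(f)\cap D_+(g)=D_+(fg)$ by the $D$-prime property and $fg$ is again relevant by the same argument.
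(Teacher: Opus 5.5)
Your proposal is correct and follows the paper's proof. You prove (iv) via the $D$-prime property on homogeneous elements, and you split (v) into the torsion-free case (radicals of graded ideals are graded, or equivalently $D$-primes are prime) and the torsion case, which the radicality hypothesis covers. For the basis claim, you write the complement of a closed set as a union of sets $D_+(f)$ with $f$ relevant. Your explicit check that $hf$ is relevant with $\Fp\in D_+(hf)\subseteq D_+(h)$ supplies exactly the justification the paper leaves implicit in its identity $V_+(\Fa)=\bigcap_{f\in\Fa\cap\Rel^D(S)}V_+(f)$.
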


\begin{proof}
    Assertions (i) - (iii) follow from \cite{Bosch}, Proposition 6.1/1. For (iv), let $\Fp \in \Proj_D(S)$ such that $\Fp \not\in V_+(E), V_+(E')$, i.e.\ $E, E' \not\subset \Fp$. Then there exist homogeneous elements $f \in E, f' \in E'$ such that $ff'\not\in \Fp$. As $\Fp$ is prime on homogeneous elements, it follows $f, f' \not \in \Fp$, thus $\Fp \not\in V_+(EE')$. Regarding $(v)$, we have to consider two cases. First, if $D$ has torsion, we demand that $E$ is radical, so $(v)$ holds for trivial reasons. If $D$ is torsion-free, then $D$ has a total order. In particular, $\rad(E)$ is a homogeneous subset of $S$ and $D$-prime ideals are prime. Thus for $\Fp \in D(S_+)$ we have $E \subseteq \Fp$ if and only if $\rad(E) \subseteq \Fp$. \\

For the last claim, let $U \subset \Proj_D(S)$ be open and $\Fa \unlhd S$ be graded such that $\Proj_D(S) \setminus U = V_+(\Fa)$. Thus
    \begin{align*}
        V_+(\Fa) \equ \bigcap_{f \in \Fa \cap \Rel^D(S)} V_+(f),
    \end{align*}
    which induces
    \begin{align*}
        U \equ \Proj_D(S) \setminus V_+(\Fa) \equ \bigcup_{f \in \Fa \cap \Rel^D(S)} D_+(f) .
    \end{align*}
\end{proof}

We can formulate a notion of relevance in this context:

\begin{definition}
    Let $E \subseteq S$ be a homogeneous subset. We define the \emph{support of $E$ (resp. of $\Fa = (E)$)} to be the set of all degrees contained in the ideal generated by $E$, i.e.
    \begin{align*}
        \supp(E) \deq \{\deg(g) \mid g \in (E)\} .
    \end{align*}
\end{definition}

\begin{corollary} 
Let $S$ be a factorially $D$-graded noetherian domain, and $r = \rank(D)$.
    An element $f \in S$ is relevant if and only if $V_+(f)$ has at least $r$ irreducible components such that their support generates a group of rank $r$.
\end{corollary}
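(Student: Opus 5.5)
The plan is to use the factorization of $f$ into $D$-prime elements and read off relevance from the degrees of its prime factors. Since $S$ is factorially $D$-graded and $f$ is homogeneous, write $f = u\, p_1^{a_1}\cdots p_s^{a_s}$ with $u$ a homogeneous unit and $p_1,\dots,p_s$ pairwise non-associate $D$-prime elements. The first step is to identify the irreducible components of $V_+(f)$ with the sets $V_+(p_i)$. The principal ideals $(p_i)$ are $D$-prime. By Proposition~\ref{prop:hom_top}(iv),(v) we have $V_+(f) = \bigcup_i V_+(p_i)$. Every $D$-prime $\Fp \ni f$ contains some $p_i$, so the minimal $D$-primes over $(f)$ are exactly the $(p_i)$. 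I also need that $V_+(p_i)$ is irreducible when nonempty. For this I would check the standard fact that $V_+(\Fp)$ for a $D$-prime $\Fp$ is irreducible, via (iv) applied to a homogeneous generator argument. Components with $(p_i) \supseteq S_+$ give empty sets and are discarded. This identifies the supports of the components with the $\supp((p_i)) = \deg(p_i) + \supp(S)$.

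The second step translates relevance into a statement about $\deg(p_i)$. The homogeneous units of $S_f$ are exactly $u' \prod p_i^{b_i}$ with $b_i \in \BZ$. This uses unique factorization: a homogeneous unit in $S_f$ has the form $g/f^k$, and $g$ divides a power of $f$, so $g$ is a product of the $p_i$ up to a unit. Hence
\begin{align*}
D^f \equ D^1 + \langle \deg(p_1),\dots,\deg(p_s)\rangle ,
\end{align*}
where $D^1$ is the degree group of the units of $S$. So $f$ is relevant iff the $\deg(p_i)$ together with $D^1$ span a rank-$r$ subgroup. I expect units of $S$ to be handled by effectiveness: the unit degrees lie in $\supp(S)$ and contribute at most torsion or rank already visible after multiplying components. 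If necessary, I would treat the case $D^1$ of positive rank separately, by noting that then units can be absorbed into generators.

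The third step compares $\langle \deg(p_i)\rangle$ with the group generated by the supports $\supp(p_i)$. One inclusion is clear, since $\deg(p_i) \in \supp(p_i)$. Conversely, $\supp(p_i) \subseteq \deg(p_i) + \supp(S)$, and $\supp(S)$ generates $D$. So I must read the condition ``their support generates a group of rank $r$'' in the intended sense: the rank of the subgroup generated by the degrees $\deg(p_i)$ of the components. I would make this precise by taking the minimal degree data of each component, namely the class of $\deg(p_i)$. Rank $r$ with at least $r$ components is then equivalent to $\rank\langle\deg(p_i)\rangle = r$, which is equivalent to finite index. This matches the relevance criterion, and the count ``at least $r$'' is automatic from rank $r$.

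The main obstacle is this last interpretive step. Taken literally, $\supp$ of a nonzero ideal always generates $D$ by effectiveness, so the argument must use the degree of the generating $D$-prime. Showing that the components correspond bijectively to non-associate $D$-primes is the technical core. It needs noetherianity, so that there are finitely many components, and irreducibility of $V_+(p_i)$ in the homogeneous Zariski topology. That irreducibility may fail to be nonempty when $p_i$ divides every relevant element, and that case needs a separate check.
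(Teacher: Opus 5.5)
Your strategy is the paper's: factor $f$ into $D$-primes, identify the components of $V_+(f)$ with the nonempty $V_+(p_i)$, and compute $D^f = D^1+\langle \deg(p_1),\dots,\deg(p_s)\rangle$. You are right to read ``support'' as the degree of the $D$-prime generator. Taken literally, every $\supp((p_i))$ generates $D$, as you note; the degree reading is also what the paper's $\Leftarrow$ step uses (``their degrees span a cone of rank $r$''). With that reading your $\Leftarrow$ direction is fine. Every component is some nonempty $V_+(p_i)$, and $\deg(p_i)\in D^f$, so rank $r$ gives finite index.

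The $\Rightarrow$ direction fails exactly at the two points you postponed. No ``separate check'' can rescue it, because the statement is false there.

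First, in Step~3 you use the rank of the degrees of \emph{all} prime factors. But only the $p_i$ with $S_+\not\subseteq (p_i)$ give components, and discarding the others can lower the rank. Take the paper's own example $S=\BC[x,y,z]$ with $\deg x=\deg y=e_1$ and $\deg z=e_2$. The element $xz$ is relevant, since $D^{xz}=\BZ^2$. However $S_+=(xz,yz)\subseteq (z)$, so $V_+(z)=\emptyset$. Hence $V_+(xz)=V_+(x)=\{(x)\}$ is a single irreducible component of degree $e_1$, while $r=2$. Even $S=\BC[x_0]$ with $\deg x_0=1$ fails: $x_0$ is relevant but $V_+(x_0)=\emptyset$.

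Second, units of non-torsion degree cannot be ``absorbed into generators''. Take $S=\BC[x,y,t^{\pm1}]$ with $\deg x=\deg y=e_1$ and $\deg t=e_2$. The element $x$ is relevant because $e_1,e_2\in D^x$. Yet $x$ has only one $D$-prime factor, and $V_+(x)=\{(x)\}$.

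The paper's proof passes over the same point: it calls each $V_+(g_i)$ irreducible, which fails when that set is empty. What your argument does prove is a corrected statement. If the homogeneous units of $S$ have torsion degrees, then $f$ is relevant if and only if the degrees of the generators $p_i$ of the minimal $D$-primes over $(f)$ span a subgroup of rank $r$. This can be phrased through the components of $V_+(f)$ only under the further hypothesis that $S_+\not\subseteq (p_i)$ for every $i$.
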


\begin{proof}
    \Ra   Let $f$ be relevant. We assume without loss of generality that $f$ has exactly $r$ irreducible homogeneous divisors $g_1, \ldots, g_r$ (as $S$ is noetherian and factorially graded). In particular, there are $r$ minimal $D$-prime ideals $\Fp_i = (g_i)$ over $f$, giving us 
    \begin{align*}
        V_+(f) \equ (D_+(f))^c \equ \left(\bigcap_{i=1}^r D_+(g_i)\right)^c \equ \bigcup_{i=1}^r V_+(g_i).
    \end{align*}
    Note that the sets $V_+(g_i)$ are irreducible as the $g_i$ are. Since $f$ is relevant, the support of $f$, given by the union of $\supp(\Fp_i)$, generates a group of rank $r$.  

    \La Conversely, we may assume that $V_+(f)$ has exactly $r$ irreducible components without loss of generality, i.e.\ $V_+(f) = \cup_{i=1}^r X_i$ for some irreducible subsets $X_i \subseteq V_+(f)$. In particular, $X_i = V_+(\Fp_i)$ for some $D$-prime ideal minimal over $f$, so that the $\Fp_i$ correspond to the irreducible divsors of $f$. Thus, we have $r$ distinct irreducible homogeneous divisors of $f$, such that their degrees span a cone of rank $r$, hence $f$ is relevant.
\end{proof}

The following statement is a generalization of \cite{Bosch}, 9.1/12. It was already stated in \cite{KU}, Lemma 3.5, but without a proof.

\begin{lemma}\label{lem:basis_shifted}
Let $S$ be a $D$-graded ring.
For $n \in \BN_+$ we define the graded ring $S^{(n)}$ to be $(S^{(n)})_d := S_{nd}$. Then
\begin{align*}
    \Proj^D(S) \stackrel{\sim}{\equ} \Proj^D(S^{(n)}).
\end{align*}
In particular, for $d \in D$ the sets of type $D_+(f)$ for relevant $f \in S_{nd}$, such that $n \in \BN_+$ varies, form a basis of the homogeneous Zariski topology on $\Proj_D(S)$.
\end{lemma}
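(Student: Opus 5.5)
The plan is to compare the two Proj's chart by chart. I would first show that the relevant elements of $S$ and of $S^{(n)}$ determine the same affine opens with the same coordinate rings, and then conclude by gluing.

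\textbf{Step 1: relevance passes to $S^{(n)}$.} For homogeneous $f \in S$, the power $f^n$ has degree $n\deg(f)$, so it is a homogeneous element of $S^{(n)}$ of degree $\deg(f)$. Moreover $S_f = S_{f^n}$.

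\textbf{Step 2: periodicity in both directions.} If $f$ is relevant in $S$, then $D^{f}$ has finite index in $D$. Multiplication by $n$ sends homogeneous units of $S_f$ to homogeneous units of degree in $nD$. Conversely, if $u$ is a homogeneous unit of $S_{f^n}$ with degree in $nD$, then $u$ defines a unit of $(S^{(n)})_{f^n}$. Hence the degree group of units of $(S^{(n)})_{f^n}$ is
\begin{align*}
    \{d \in D \mid nd \in D^{f}\}.
\end{align*}
This group contains $D^f$, so it has finite index. In the other direction, if $g \in S^{(n)}$ is relevant, then $g$ is homogeneous in $S$. Its unit group in $S$ contains $n$ times the unit group computed in $S^{(n)}$, and that image has finite index in $nD$, hence in $D$ up to the finite group $D/nD$. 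So $g$ is relevant in $S$.

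\textbf{Step 3: identical degree-zero rings.} The key identity is
\begin{align*}
    (S^{(n)})_{(f^n)} = S_{(f)},
\end{align*}
since both equal the degree-zero part of $S_f$, and the degree-zero part only involves elements of $S_{0}\subseteq S^{(n)}$-degrees. Concretely, an element $a/f^k$ of degree $0$ can be rewritten as $af^{(n-1)k}/f^{nk}$, whose numerator has degree $nk\deg f \in nD$.

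\textbf{Step 4: gluing.} The charts $\Spec(S_{(f)})$ and $\Spec((S^{(n)})_{(f^n)})$ coincide, and their intersections are given by $\Spec(S_{(fg)})$ in both cases. Since every relevant element of $S$ has a relevant power in $S^{(n)}$, and every relevant element of $S^{(n)}$ is relevant in $S$, the two covering families generate the same union in $\CQ(S)$. This gives the isomorphism $\Proj^D(S)\cong\Proj^D(S^{(n)})$.

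\textbf{Step 5: the basis statement.} By Proposition~\ref{prop:hom_top}, the sets $D_+(f)$ for relevant $f$ form a basis. Since $D_+(f)=D_+(f^m)$ for all $m$, we may replace $f$ by suitable powers. For fixed $d$, I would use Corollary~\ref{cor:rel_many_el_deg_zero} to modify $f$ into a relevant element of degree $nd$ with the same $D_+$ locally. The aim is to cover $D_+(f)$ by sets $D_+(f^N g)$ with $f^N g \in S_{nd}$ relevant. The most direct route is to take products $f^N g_i$ where the $g_i$ range over elements of degree $nd - N\deg f$ whose $D_+(g_i)$ cover. This covering is the step I expect to be the main obstacle: it needs $d$ to lie in the relevant cone or span, and I would first try choosing $n$ so that $nd$ lies in $D^f$-translates. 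Identifying $\Proj_D$ with $\Proj^D$ uses the homeomorphisms $\psi_f$, which is the main theorem's content.
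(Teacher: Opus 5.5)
Your Steps 1--4 are sound and follow the paper's route, which is to show that $S$ and $S^{(n)}$ have matching relevant loci. Your identity $(S^{(n)})_{(f^n)}=S_{(f)}$, obtained via $a/f^k=af^{(n-1)k}/f^{nk}$, together with passing from $f$ to $f^n$ (since $f$ itself need not lie in $S^{(n)}$), is exactly what turns ``same relevant locus'' into an isomorphism of the glued schemes. The paper leaves this implicit.

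The genuine gap is Step~5. You never produce the cover of $D_+(f)$ by sets $D_+(f^Ng_i)$ with $f^Ng_i\in S_{nd}$ relevant. No choice of $n$ or of the $g_i$ can produce it, because the basis claim with $d$ fixed and only $n$ varying is false in general, even when $d$ is the degree of a relevant element.

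Take the running example $S=\BC[x,y,z]$ with $\deg x=e_1$, $\deg y=e_2$, $\deg z=e_1+e_2$, and let $d=(a,b)$. A monomial $x^\alpha y^\beta z^\gamma\in S_{nd}$ satisfies $\alpha-\beta=n(a-b)$. Hence every element of $S_{nd}$ is divisible by $x$ if $a>b$, and by $y$ if $a<b$.

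If $a=b$, then $S_{nd}$ consists of forms $p(xy,z)$. Suppose $p(xy,z)$ is not divisible by $xy$. Then every linear factor $\lambda u+\mu v$ of $p(u,v)$ has $\mu\neq 0$, so every $D$-prime factor $\lambda xy+\mu z$ of $p(xy,z)$ has degree $e_1+e_2$. Thus $D^{p(xy,z)}\subseteq\BZ(e_1+e_2)$ and $p(xy,z)$ is not relevant. So every relevant element of $S_{nd}$ is divisible by $xy$.

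Consequently, for every $d$, either all relevant elements of $\bigcup_n S_{nd}$ lie in $(x)$, or all lie in $(y)$. Yet both $(x)$ and $(y)$ are points of $\Proj_D(S)$, since $yz\notin(x)$ and $xz\notin(y)$. So these sets do not even cover $\Proj_D(S)$. Requiring $d$ to lie in the relevant cone, as you suspected might be needed, does not help: $d=e_1+e_2=\deg(xy)$ already fails.

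What the first half of the lemma does support is the version with $n$ fixed and the degree ranging over all of $nD$, i.e.\ the sets $D_+(g)$ for $g$ relevant in $S^{(n)}$. In rank one the ray $\BN_+d$ and the subgroup reading coincide; in higher rank only the subgroup reading survives. This version needs neither Corollary~\ref{cor:rel_many_el_deg_zero} nor any covering argument. For a $D$-prime $\Fp$ and homogeneous $f$ one has $f^n\in\Fp$ if and only if $f\in\Fp$. So $D_+(f)=D_+(f^n)$ with $f^n$ relevant of degree $n\deg f\in nD$, and Proposition~\ref{prop:hom_top} finishes.

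Also, do not route the topological part through the maps $\psi_f$, as your last sentence proposes. The lemma assumes no factoriality, and Proposition~\ref{prop:psi_f_injective} itself invokes this lemma, so appealing to Theorem~\ref{thm:Proj_D=Proj^D} here would be circular. The paper's own proof only compares relevant loci and never argues the basis claim, so there is nothing there to recover for that part.
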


\begin{proof}
It is sufficient to show that both rings ($S$ with the two different gradings) share the same relevant locus. But $f \in S$ is $D$-relevant if and only if the degrees $\deg_D(g)$ generate a finite index subgroup of $D$, where $g$ is a homogeneous divisor of $f^m$ for some positive integer $m$. Let $g \in S_d$ be such a divisor, then clearly $g^n$ is still a homogeneous divisor of some power of $f$. In other words, $g^n \in (S^{(n)})_d$ and $f$ is also $D^{(n)}$-relevant (i.e.\ $d \in D^f$ if and only if $d \in (D^{(n)})^f$).

Conversely, let $f \in S^{(n)}$ be $D^{(n)}$-relevant and $g \in (S^{(n)})_d$ a homogeneous divisor of $f^m$, thus $dn \in (D^{(n)})^f$. But $g$ has $D$-degree $d$, so clearly $d \in D^f$ (and therefore $dn \in H_f$ since $n \in \BN_+$) and $f$ has to be also $D$-relevant.
\end{proof}

Most parts of the Proj construction are sensitive to the chosen irrelevant ideal. As we have already seen in examples, the whole Proj might be non-separated. Thus, we want to consider subsets of $S_+$. Let $B \unlhd S$ be a subset of $S_+$. Then the pair $(S, B)$ will be called a \emph{conical ring}.

\begin{definition}\label{def:hom_top_B}
Let $(S, B)$ be a conical ring. We call
\begin{align*}
    \Proj_D^B(S) \deq \{\Fp \in \DSpec(S) \mid B\not\subset \Fp\}
\end{align*}
the \emph{multihomogeneous $D$-prime spectrum} of $(S, B)$ (w.r.t $D$).
Therefore we call $V_+(B) = \{\Fp \mid \Fp \text{ is $D$-prime and } B \subseteq \Fp\}$ the \emph{irrelevant subscheme} and $D_+(B) := \Proj_D^B(S) \setminus V_+(B)$ the \emph{relevant locus} of $\Proj_D^B(S)$. For $f \in \Rel^D_B(S)$ we define
\begin{align*}
    D_B(f) \deq \{\Fp \in \Proj_D^B(S) \mid f \not\in \Fp\}.
\end{align*}
\end{definition}

\begin{example}
    Consider $S = \BC[x]$ and $D=\BZ/2\BZ$ from Example~\ref{ex:D_prime_homeo}. It holds $\deg(x) = 1$ and hence
    \begin{align*}
        \Proj^D(S) \equ \Proj_D(S) \equ \DSpec(S),
    \end{align*}
    where the last equation is due to $S_+ = (1)$ and therefore $V(S_+) = \emptyset$. In particular, 
    \begin{align*}
        \DSpec(S) \equ \Spec(S_0).
    \end{align*}
\end{example}

We use the following (technical) language to grasp the structure of $\psi_f$.

\begin{definition}\label{def:el_deg_zero}
Let $(S, B)$ be a factorially graded conical ring.
    Let $f\in \Rel^D_B(S)$ and $0 \neq g \in S_e$.
    Furthermore, we assume that $1_S$ is not relevant.
    As $S$ is factorially graded, we can bring rational functions $\frac{g}{f}$ in reduced fractional form and talk about homogeneous elements being coprime (i.e.\ having no common non-unit homogeneous factor, where units are taken with respect to $S_0$).
    \begin{enumerate}[label=(\arabic*)]
        \item Let $r, s \in S$ be homogeneous coprime elements. We say that $\frac{r}{s}$ is a  \emph{non-trivial element of degree zero in $\Quot(S)$} if there exists some relevant $f' \in S$ such that $\frac{r}{s} \in S_{(f')}\setminus S_0$. If the specific relevant element is given, we might also say that $\frac{r}{s}$ is a \emph{non-trivial element of $S_{(f')}$}.

         \item Let $f' \in S$ be relevant. We say that a non-trivial element $\frac{r}{s}$ of $S_{(f')}$ is a \emph{generator of $S_{(f')}$} as $S_0$-algebra, if $\frac{r}{s}$ is part of a minimal generating system of $S_{(f')}$ as $S_0$-algebra.
         We say that a non-trivial element of degree zero $\frac{r}{s}$ in $\Quot(S)$ is a \emph{generator of degree zero}, if there exists some relevant $f'' \in S$ such that $\frac{r}{s}$ is a generator of $S_{(f'')}$.
            
        \item We say that $\frac{g^k}{f^l}$ \emph{represents a non-trivial element of degree zero in $S_{(f)}$}, if there is a non-trivial element $\frac{r}{s}$ of $S_{(f)}$ and integers $k, l\in \BN$, such that the reduced fractional form of $\frac{g^k}{f^l}$ is equal to $\frac{r}{s}$. 
        
        \item Let $g'\mid f$ be some homogeneous divisor of $f$.
        We say that the pair $(g, g')$ \emph{admits a non-trivial element of $S_{(f)}$}, if there exists a homogeneous element $h \in S\setminus\{0\}$ and integers $k, l \in \BN$ such that $\frac{g^k h}{(g')^l}$ is a non-trivial element of $S_{(f)}$, i.e.\ $\deg(g^k h) - \deg((g')^l) = 0$ and $\frac{g^k h}{(g')^l}\not\in S_0$. As the exponents of $g'$ in $S_{(f)}$ are natural numbers, clearly there exists a minimal exponent $n_h$ of $g'$ such that there exists an element $0\neq h \in h(S)$ and $k \in \BN$ such that $\frac{g^kh}{(g')^{n_h}}$ is a non-trivial element of $S_{(f)}$, i.e.\ $n_h$ is minimal with the property that $\frac{g^kh}{(g')^{n_h}}$ is a non-trivial element of $S_{(f)}$.

        \item Let $g , g' \in S$ be homogeneous. In general we say that the pair $(g, g')$ \emph{admits a non-trivial element of degree zero (in $\Quot(S)$)}, if there exists some relevant $f' \in S$ such that $(g, g')$ admits a non-trivial element of $S_{(f')}$.
    \end{enumerate}    
\end{definition}

\begin{remark}
\begin{enumerate}
    \item We may replace $g'$ by $f$ in Definition~\ref{def:el_deg_zero}, as we can get rid of the extra factors $\frac{f}{g'}$ by just multiplying them with $h$. We decided on this formulation so that (5) reads clearly.

    \item In Situation (4), note that $h$ may not be unique, only $\deg(h)$ is, and thus the exponent of $f$. 
\end{enumerate}
\end{remark}

    The pair $(g, f)$ admits a non-trivial element of $S_{(f)}$ if and only if $(gS_f) \cap S_{(f)}$ contains an element that does not belong to $S_0$. Note that this is exactly the origin of the element $h$ in Definition~\ref{def:el_deg_zero} (4), i.e.\ we multiply $g^k$ with $\frac{h}{f^l} \in S_f$ and as this is also supposed to be an element of $S_{(f)}$, it holds $\deg(g^kh) = \deg(f^l)$.
    In particular, if $D = \BN$ then $h = 1$, as for two given homogeneous elements $a, b$ with degrees in $\BN$, there always exist some $\alpha, \beta \in \BN$ such that $\deg(a^\alpha) = \deg(b^\beta)$.

\begin{example}
    \begin{enumerate}
        \item Let $S = \BC[x, y, z]$ where $x,y \mapsto e_1$ and $z \mapsto e_2$. As we only have the linear dependency $\deg(x) = \deg(y)$, $\deg(z)$ is linearly independent with respect to the other variables. In particular, $z$ cannot contribute to a non-trivial element of degree zero in $\Quot(S)$. In other words, for all $f \in \Gen^D(S) = \{xz, yz\}$, $(z, f)$ does not admit a non-trivial element of degree zero. 

        \item Let $S = \BC[x, y, z]$ and $x \mapsto e_1, y \mapsto e_2, z \mapsto e_1 + e_2$, $f =xz$ and $g = xy-z$ (cf.\  Example~\ref{ex:not_rel_no_homeo}). 
        Then $(g, f)$ admits a non-trivial element of degree zero in $S_{(f)}$ by taking $h = x$, i.e.\ $\frac{gh}{f}\equ \frac{(xy-z)x}{xz} = \frac{xy-z}{z} = \frac{xy}{z}-1 \in S_{(f)}$. The  generators of degree zero over $S_0$ are given by the elements $\frac{xy}{z}$ for $f= xz, yz$ and $\frac{z}{xy}$ for $f = xy$.
    \end{enumerate} 
\end{example}

The following proposition explains where the element $h$ in the definition of a non-trivial element of $\Quot(S)$ comes from.

\begin{proposition}\label{prop:el_deg_zero}
Let $S$ be a factorially $D$-graded integral domain and let $f\in\Rel^D_B(S)$.
Furthermore, let $\Fp\in D_+(f)$ and let $g\in\Fp_e$ be homogeneous.
If there exist a homogeneous element $h\in S$ and $k,l\in\mathbb N$ with
\[
\deg(g^k h)=\deg(f^l),
\]
then $h\notin\Fp$.
\end{proposition}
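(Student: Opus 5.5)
We argue by contradiction and reduce everything to the $D$-prime factors of $h$. Suppose $h\in\Fp$. Since $S$ is factorially $D$-graded, write $h=u\,p_1\cdots p_m$ with $u$ a homogeneous unit and $p_i$ $D$-prime elements. Because $\Fp$ is $D$-prime (Definition~\ref{def:fact_graded}(2)) and all factors are homogeneous, $h\in\Fp$ forces some $p_i\in\Fp$. On the other side we know $f\notin\Fp$, since $\Fp\in D_+(f)$. Again by $D$-primality, no $D$-prime divisor of any power $f^N$ lies in $\Fp$. So it suffices to show the following \emph{key claim}: every $D$-prime factor $p_i$ of $h$ divides some power of $f$. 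In other words, $h$ becomes a unit in $S_f$, i.e.\ $\deg(h)$ is realised by a homogeneous unit of $S_f$.

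To attack the key claim, consider the degree-zero element
\[
q\;=\;\frac{g^k h}{f^l}\in S_{(f)}.
\]
We bring it into reduced fractional form $\frac{r}{s}$ as in Definition~\ref{def:el_deg_zero}. Since the denominator is a power of $f$, its $D$-prime factors all divide $f$. We then use the relevance of $f$ in the form of Corollary~\ref{cor:rel_many_el_deg_zero}, together with the finite index of $D^f$ in $D$. This gives a homogeneous unit $v\in(S_f)^\times$ and an exponent $N>0$ with $\deg(v)=N\deg(h)$. Comparing $h^N$ with $v$ produces the degree-zero quotient $h^N/v\in S_{(f)}$. The goal is to show that this quotient is a unit of $S_{(f)}$, equivalently $h^N\in(S_f)^\times$; this yields the key claim, because a $D$-prime dividing a unit of $S_f$ must divide a power of $f$. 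For this we factor $q^N$ as $g^{kN}\cdot(h^N/v)\cdot(v/f^{lN})$ inside $S_{(f)}$.

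Finally, we pass to $\psi_f(\Fp)=\Fp S_f\cap S_{(f)}$. The element $q^N$ and the quotient $h^N/v$ both lie in $S_{(f)}$, and we want to compare their membership in $\psi_f(\Fp)$ with the membership of $g$ and $h$ in $\Fp$. The main obstacle is precisely the key claim: showing that the factors of $h$ are forced to be divisors of powers of $f$, rather than merely having degrees in $D^f$. Here I would first try to exploit coprimality in the reduced fractional form of $q$. A $D$-prime factor $p_i$ of $h$ that does not divide $f$ survives in the numerator $r$. I would then use $g\in\Fp$ and $f\notin\Fp$ to show that such a surviving factor cannot simultaneously lie in $\Fp$, which is the contradiction we need.
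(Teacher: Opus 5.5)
Your proposal has a genuine gap: the key claim that every $D$-prime factor of $h$ divides a power of $f$ is false. So is the statement in the universal form you are trying to prove. The hypothesis $\deg(g^kh)=\deg(f^l)$ sees $h$ only through $\deg(h)$, whereas membership in $\Fp$ depends on $h$ itself.

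Here is a counterexample. Take $S=\BC[x,y,z]$ graded as in Example~\ref{ex:not_rel_no_homeo}, so $\deg x=e_1$, $\deg y=e_2$, $\deg z=e_1+e_2$. Take $B=S_+$, $f=xz$ (relevant, with $D^f=D$), $\Fp=(y)\in D_+(f)$, $g=y$, $k=1$ and $l=2$.
\begin{itemize}
\item Then $h$ must have degree $2\deg(f)-\deg(g)=(4,1)$. This degree contains both $x^3z\notin\Fp$ and $x^4y\in\Fp$.
\item With $h=x^4y$ you get $\deg(gh)=(4,2)=\deg(f^2)$, and $gh/f^2=(xy/z)^2$ is even a non-trivial element of $S_{(f)}$. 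Yet $h\in\Fp$.
\item The $D$-prime factor $y$ of $h$ divides no power of $f$, so your key claim fails.
\item Your intermediate step fails too. With $N=1$ and $v=x^3z$ you get $h/v=xy/z$. This is a non-unit of $S_{(f)}=\BC[xy/z]$ and even lies in $\psi_f(\Fp)$.
\end{itemize}
The root error is the ``i.e.'' in your first paragraph. Having $\deg(h)\in D^f$ is much weaker than $h\in(S_f)^\times$, and relevance of $f$ only ever gives you the former. The surviving factor $p_i$ in your last paragraph can perfectly well lie in $\Fp$.

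What can be proved is the existential version: for homogeneous $g\in\Fp$ there exist some $h\notin\Fp$ and $k\ge 1$, $l$ with $\deg(g^kh)=\deg(f^l)$. This is what the paper's proof actually argues for (``we may choose $h=1$'', ``$h\notin\Fp$ by construction''). It is also the form used in Proposition~\ref{prop:psi_f_injective}, via ``$\exists h_g\in S\setminus\Fp$''.

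Your own ingredients prove this version directly. Since $D^f$ has finite index, pick $N>0$ with $-N\deg(g)\in D^f$, and a homogeneous unit $a/f^m$ of $S_f$ of that degree. Then $a$ divides a power of $f$. Because $\Fp$ is $D$-prime and $f\notin\Fp$, this gives $a\notin\Fp$. Finally $\deg(g^Na)=\deg(f^m)$. So $h$ has to be \emph{chosen} among divisors of powers of $f$; no argument can show that an arbitrary $h$ of the right degree avoids $\Fp$.
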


\begin{proof}
All we need is a reason why not all factors of $g^k \cdot h$ can be elements of $\Fp$. First of all, we clearly have $g \in \Fp$.
Now since $(g,f)$ admits a non-trivial element of $S_{(f)}$, $(gS_f) \cap S_{(f)}$ contains an element that is not in $S_0$, so either $\deg(g)$ and $\deg(f)$ are linearly dependent (i.e.\ $h=1$ and $\deg(g^k) = \deg(f^l)$ for some $k, l$) or the numerator of $\frac{g^k h}{f^l}$ is an element of $(p)S_f$, so that $h\neq 1$ and $\deg(g^kh) = \deg(f^l)$ for some $h \in S\setminus \Fp$.
In the first case, we may choose $h = 1\not\in\Fp$ and in the second case $h \not\in \Fp$ by construction.
\end{proof}

In Proposition 2.10 (ii), \cite{May} proof the generalization of \cite{Bosch}, Lemma 9.1/7. Concretely, for an $\BN$-graded ring $S$ and homogeneous $f, g \in S$, the localization $S_{(fg)}$ is always isomorphic to $S_{(f)}$ localized at $f^{-\deg(g)} g^{\deg(f)}$.
In the general $D$-graded setting considered by Mayeux--Riche, such a canonical choice is no longer available, and one only obtains an existential description of the isomorphism.
In this paper, we restrict to factorially graded rings and thus can determine when we really do have new elements in the right-hand localization.

\begin{lemma}\label{lem:9.1/7_gen}
    Let $f \in \Rel^D_B(S)$ and $g \in S_e$ be non-zero. Then there is a canonical isomorphism
    \begin{align*}
       \varphi_{f, g}\colon S_{(fg)} \stackrel{\sim}{\to} (S_{(f)})_\CG,
    \end{align*}
   where $\CG$ is the multiplicative subset generated by all generators of degree zero in $\Quot(S)$ with denominator $g'$, where $g' \mid g$ and all generators of degree zero with denominator $h$, such that there exist elements $x \mid f$, $y \mid g$, $xy \not\mid f, g$ but $xy \mid h$.
\end{lemma}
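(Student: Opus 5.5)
The plan is to build $\varphi_{f,g}$ as the map induced by the universal property of localization, and then identify exactly which fractions must be inverted. Both $S_{(f)}$ and $S_{(fg)}$ sit inside the degree-zero part of $\Quot(S)$. Since $S_f \to S_{fg}$ is a graded ring map, it restricts to an inclusion $\iota\colon S_{(f)} \hookrightarrow S_{(fg)}$. We then show that every element of $\CG$ is invertible in $S_{(fg)}$.

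\textbf{Invertibility of $\CG$.} Consider a generator of degree zero $\frac{r}{s}$ in reduced form.
\begin{enumerate}
\item If $\frac{r}{s}$ lies in $S_{(f)}$ and has denominator $g'\mid g$, then $s$ is a unit in $S_{fg}$. Its numerator $r$ divides a power of $fg$, since it is built from homogeneous factors appearing in $S_{(f)}$ together with $g$. Hence $\frac{s}{r}\in S_{(fg)}$.
\item For generators whose denominator $h$ is divisible by a product $xy$ with $x\mid f$ and $y\mid g$, every prime factor of $h$ divides $fg$. So $h$ becomes a unit in $S_{fg}$, and the same conclusion holds.
\end{enumerate}
The universal property then gives a canonical map $\varphi\colon (S_{(f)})_\CG \to S_{(fg)}$. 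It is injective because all rings involved are subrings of the domain $\Quot(S)$. We take $\varphi_{f,g}$ to be its inverse once surjectivity is known.

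\textbf{Surjectivity.} Take $\frac{a}{f^mg^n}\in S_{(fg)}$ with $a$ homogeneous. Using that $S$ is factorially $D$-graded, write the fraction in reduced form and factor $a$ into $D$-prime elements. We must show the fraction can be written as $\frac{b/f^{m'}}{c}$, where $b/f^{m'}\in S_{(f)}$ and $c$ is a product of elements of $\CG$. By Corollary~\ref{cor:rel_many_el_deg_zero}, applied with $h=g$ and the relevant element $f$, there are $N>0$, $k$, and a homogeneous $g_g$ such that $\deg(g^Ng_g/f^k)=0$. So $u=\frac{g^Ng_g}{f^k}$ lies in $S_{(f)}$, and the strategy is to show it is a product of elements of $\CG$, possibly after enlarging $N$.

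Multiplying $\frac{a}{f^mg^n}$ by a suitable power of $u$ clears the $g$ from the denominator. This produces $\frac{a\,g_g^{\,t}\cdots}{f^{m''}}\cdot\frac{1}{\text{stuff}}$, where the extra factors come from the divisors of $g_g$. We split according to how the prime factors of the reduced denominator are distributed.
\begin{enumerate}
\item Factors dividing $g$ alone give the first type of generator in $\CG$.
\item Factors arising as mixed products $xy$, with $x\mid f$ and $y\mid g$, that do not divide $f$ or $g$ individually give the second type.
\end{enumerate}
The key step is to decompose $u$, or a suitable power of it, into a product of minimal generators of degree zero, each of one of these two types. This uses the minimal generating system of $S_{(f)}$ over $S_0$ from Definition~\ref{def:el_deg_zero}(2). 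Proposition~\ref{prop:el_deg_zero} guarantees that the cofactor $h$ can be taken coprime to the relevant primes, so no unwanted denominators appear.

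\textbf{Main obstacle.} The hardest step is showing that a generator of $S_{(fg)}$ really factors through generators of $\CG$. The concern is that a new degree-zero generator of $S_{(fg)}$ might have a denominator mixing prime factors of $f$ and $g$ in a way that is neither type. I would handle this by induction on the number of $D$-prime factors in the reduced denominator. If the denominator is divisible only by factors of $g$, it is of the first type. Otherwise, pick a prime $x\mid f$ and a prime $y\mid g$ occurring in it. If $xy$ divides neither $f$ nor $g$, the generator is of the second type. If it divides one of them, we may absorb $x$ into the $S_{(f)}$ part or $y$ into the $g$ part and reduce the count. Uniqueness of $D$-prime factorization makes this bookkeeping well defined. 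Canonicity follows because every map used is induced by inclusions inside $\Quot(S)$.
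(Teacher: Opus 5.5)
The gap is in your ``Invertibility of $\CG$'' step. It comes from reading $(S_{(f)})_\CG$ as a localization that \emph{inverts} the fractions in $\CG$. A degree-zero fraction $r/g'$ with $g'\mid g$ lies in $S_{(fg)}$ simply because $g'$ is a unit in $S_{fg}$. In general, however, it is neither an element of $S_{(f)}$ nor a unit of $S_{(fg)}$.

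Your claim that the numerator $r$ divides a power of $fg$ is unjustified and false. Take $S=\BC[x,y,z]$ with the standard $\BZ$-grading, $f=xy$ and $g=y$. Then $z/y=xz/(xy)$ lies in $S_{(f)}$ and is a generator of degree zero (of $S_{(y)}=\BC[x/y,z/y]$) with denominator $y\mid g$. Yet $y/z\notin S_{(fg)}=S_{(xy)}$, since otherwise $z$ would be a unit in $S_{xy}$. So under your reading the map $(S_{(f)})_\CG\to S_{(fg)}$ does not exist, and the statement itself would be false.

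Your case 2 is a non sequitur as well. The condition $xy\mid h$ does not force every prime factor of $h$ to divide $fg$. Even if it did, this would place $r/h$, not $h/r$, in $S_{(fg)}$.

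The paper's proof shows that $\CG$ is meant to be \emph{adjoined}. It compares minimal generating systems as $S_0$-algebras. A generator of degree zero of $S_{(fg)}$ in reduced form has as denominator either a divisor of $f$ (and then already lies in $S_{(f)}$), a divisor of $g$, or a mixed product $xy$. The last two kinds are exactly what $\CG$ supplies, so $S_{(fg)}=S_{(f)}[\CG]$.

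Your surjectivity argument does not close either. You give no reason why $u=g^Ng_g/f^k$, whose reduced denominator divides $f^k$, should be a product of elements of $\CG$. Moreover, Corollary~\ref{cor:rel_many_el_deg_zero} does not guarantee that $g_g$ divides a power of $f$, so $u^{-1}$ need not lie in $S_{(fg)}$, and inverting $u$ can overshoot. The proposed induction (``absorb $x$ into the $S_{(f)}$ part'') also breaks down: moving a single prime factor from one fraction to another destroys degree zero, which is exactly why minimal generators cannot be split.

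If you want to keep your approach, use relevance more sharply. Homogeneous units of $S_f$ have the form $b/f^k$ with $b\mid f^m$. So from $-N\deg(g)\in D^f$ you may choose $g_g=b$ dividing a power of $f$. Then $u^{-1}=f^k/(g^Nb)\in S_{(fg)}$. The identity $\frac{a}{f^mg^n}=\frac{a\,g^{(N-1)n}b^n}{f^{m+kn}}\,u^{-n}$ then gives $S_{(fg)}=(S_{(f)})_u$, the analogue of the classical statement. Relating this single localization to the paper's $\CG$ still requires the generator comparison above.
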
 

\begin{proof}
It suffices to show that both rings have the same minimal generating set as $S_0$-algebras. Let $(h, fg)$ admit a non-trivial element of degree zero in $\Quot(S)$, whose reduced fractional form is a generator of degree zero. Then there are three possibilities: (i) the reduced fractional form has either a divisor of $f$, (ii) a divisor of $g$, or (iii) a product of type $xy$ for $x \mid f$, $y \mid g$ but $xy \nmid f, g$ as denominator. Now all elements where the reduced fractional form has a divisor of $f$ as denominator are contained in $S_{(f)}$ and all elements of type (ii) and (iii) are adjoined by $\CG$. Thus, the claim follows.
\end{proof}

\subsection{Topological description of Proj}

The prerequisites are met and we can show that for $f \in \Rel^D_B(S)$ of degree $d \in D$, the map
\begin{align*}
    \psi_f\colon D_+(f) \to \Spec(S_{(f)}),\ \Fp \mapsto \Fp S_f \cap S_{(f)} \, 
\end{align*}
 is a homeomorphism in the factorial setting. We will need this assumption to prove injectivity and surjectivity.

\emph{$\psi_f$ is well defined:} Let $\Fp \in D_+(f)$, $\Fq := \psi_f(\Fp)$ and $g \cdot h \in \Fq$ for $g = \alpha f^{-k}, h= \beta f^{-l} \in S_{(f)}$ (where $\deg(\alpha \beta) = (k+l)d$).
Since $gh \in \Fp S_f\cap S_{(f)}$, $\alpha \beta \in \Fp$. But then we know that $\alpha \in \Fp$ or $\beta \in \Fp$ since they are homogeneous, that is, $g \in \Fq$ or $h \in \Fq$. \\

\emph{$\psi_f$ is continuous:} Let $h \in S_{nd}$ be relevant, $n \in \BN$. Then clearly $h \in \Fp$ if and only if $hf^{-n} \in \psi_f(\Fp)$. Since we know that the sets $\Spec(S_{(h f^{-n})})$ form a basis of the topology on $\Spec(S_{(f)})$ we can deduce
\begin{align}\label{eq:Bosch_12}
    \Fq \in \Spec(S_{\left(\frac{h}{f^n}\right)}) \iff \frac{h}{f^n} \not\in \Fq \iff h \not\in \Fp \iff \Fp \in D_+(h)
\end{align}
and $\psi_f$ is continuous.

\begin{proposition}\label{prop:psi_f_injective} 
Let $S$ be a factorially $D$-graded integral domain.
Then $\psi_f$ is injective.
\end{proposition}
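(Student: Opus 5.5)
The plan is to follow the classical argument of \cite{Bosch}, Proposition 9.1/13, and recover a $D$-prime ideal $\Fp \in D_+(f)$ from its image $\Fq = \psi_f(\Fp)$. Since $\Fp$ is graded, it is determined by its homogeneous elements. It therefore suffices to show, for every homogeneous $g \in S_e$, that
\begin{align*}
    g \in \Fp \iff \text{there are } h \in h(S)\setminus\Fp,\ k,l \in \BN \text{ with } \deg(g^k h) = \deg(f^l) \text{ and } \tfrac{g^k h}{f^l} \in \Fq .
\end{align*}
The right-hand side still refers to $\Fp$ through $h$, so I will refine it into a condition that depends only on $\Fq$. Namely, I will show that $g \in \Fp$ if and only if $\frac{g^k h}{f^l} \in \Fq$ for \emph{every} choice of homogeneous $h$ and $k,l$ with $\deg(g^k h) = \deg(f^l)$ and $\frac{g^kh}{f^l} \notin \Fq$ failing only because $h$ contributes. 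Simpler: if $\Fp, \Fp' \in D_+(f)$ with $\psi_f(\Fp) = \psi_f(\Fp')$ and $g \in \Fp$ homogeneous, I will show $g \in \Fp'$; symmetry then gives $\Fp = \Fp'$.

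For the key step, fix $g \in \Fp$ homogeneous. By Corollary~\ref{cor:rel_many_el_deg_zero}, since $f$ is relevant, there exist $g_h \in h(S)$, $N > 0$ and $k \in \BZ$ such that $x := \frac{g^N g_h}{f^k}$ has degree zero; after multiplying by a power of $f$ I may assume $k \ge 0$, so $x \in S_{(f)}$. Because $g \in \Fp$, we get $x \in \Fp S_f \cap S_{(f)} = \psi_f(\Fp) = \psi_f(\Fp')$. Hence $g^N g_h f^{m} \in \Fp'$ for some $m$. Since $\Fp'$ is $D$-prime, $f \notin \Fp'$, and all factors are homogeneous, it follows that $g \in \Fp'$ or $g_h \in \Fp'$.

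The main obstacle is excluding the case $g_h \in \Fp'$, i.e.\ arranging that $g_h$ lies in neither $\Fp$ nor $\Fp'$. This is where factoriality enters. I would decompose $g_h$ into $D$-prime factors and, using factoriality, replace $g_h$ by a product of powers of $D$-prime divisors of $f$ (which lie outside both $\Fp$ and $\Fp'$, as $f$ does), possibly multiplied by a further power of $g$. The point is that relevance makes $D^f$ of finite index, so a suitable multiple of $-N\deg(g)$ lies in $D^f$. Concretely, I will show that for some $M>0$ there is a homogeneous unit $u \in (S_f)^\times$ of degree $-M\deg(g)$. In a factorially graded domain such a unit has the form $u = c\, a/b$ with $a, b$ products of $D$-prime divisors of $f$ and $c \in S_0^\times$. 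Choosing $g_h := a f^{t}$ then gives a degree-zero element $\frac{g^M a}{b f^{t'}}$ whose extra factors avoid both $\Fp$ and $\Fp'$. This is the precise content of Proposition~\ref{prop:el_deg_zero}, which I would invoke here; the remaining check that homogeneous units of $S_f$ are of this form follows from unique factorization into $D$-primes. With $g_h \notin \Fp'$ we conclude $g \in \Fp'$, and injectivity follows.
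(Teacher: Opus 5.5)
Your argument is correct. It shares the paper's skeleton: a degree-zero fraction $g^kh/f^l$ is moved across $\psi_f(\Fp)=\psi_f(\Fp')$, and $g$ is then extracted by $D$-primality. You settle the decisive step by a different device, though.

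The paper works with pairs $(g,f)$ admitting a non-trivial element of $S_{(f)}$ and treats pairs that admit none separately. It passes from an auxiliary factor $h_g\notin\Fp$ to one outside $\Fp'$ by citing Proposition~\ref{prop:el_deg_zero}. That proposition, however, assumes $g$ lies in the ideal in question, here $g\in\Fp'$, which is what is to be shown. For an arbitrary $h$ it also fails: take $g=y$, $h=x^4y$, $f=xz$, $\Fp=(y)$ in Example~\ref{ex:double_origin_C_grading}.

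You instead choose the auxiliary factor uniformly. For $M=[D:D^f]$ there is a homogeneous unit $u=a/f^n$ of $S_f$ of degree $-M\deg(g)$. Its inverse is some homogeneous $b/f^m$, so $ab=f^{n+m}$, and hence $a$ lies outside \emph{every} $D$-prime ideal not containing $f$. Therefore $g\in\Fp \iff g^Mu\in\psi_f(\Fp)$, where $(M,u)$ depends only on $g$ and $f$. This gives injectivity at once, and it is exactly the explicit description of $\Fp$ that the surjectivity proof needs.

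What your route buys: no case distinction (relevance supplies such a $u$ for every homogeneous $g$), no circularity, and no use of factoriality at all. The paper's route buys only consistency with its language of pairs admitting non-trivial elements.

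When you write it up, drop the appeal to Proposition~\ref{prop:el_deg_zero}, since invoking it for $\Fp'$ would be circular for the reason above. Also drop the detour through factorization into $D$-primes: the relation $ab=f^{n+m}$ alone shows $a\notin\Fp'$. In your form $u=c\,a/b$, the factor $c$ need only be a homogeneous unit of $S$, not an element of $S_0^\times$.

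The garbled refinement in your first paragraph and the use of Corollary~\ref{cor:rel_many_el_deg_zero} in the second then become superfluous. Take $g_h$ to be the numerator $a$ of $u$ from the start. From $g^Mu\in\Fp'S_f$, conclude $g^Maf^s\in\Fp'$ for some $s$, and finish using $a,f\notin\Fp'$.
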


\begin{proof}
Let $\Fp, \Fp' \in D_+(f)$ be graded with $\psi_f(\Fp) = \psi_f(\Fp')$.
First note that $\psi_f$ is injective if and only if no non-zero $D$-prime ideal not containing $f$ is mapped to the zero ideal in $S_{(f)}$.
In particular, given $g \in \Fp \cap S_e$ for $e \in D$ arbitrary, we see that $\psi_f(g) \neq \frac{0}{1}$ if and only if $(g, f)$ admits a non trivial element of degree zero in $S_{(f)})$. 
Thus, if $(g,f)$ does not admit a non-trivial element, the equality $\psi_f(\Fp) = \psi_f(\Fp')$ forces $g$ to be in both or neither ideal.
Hence we apply Proposition~\ref{prop:el_deg_zero}:
\begin{align*}
    &\makebox{}{}\quad g \in \Fp_e \text{ such that } (g, f) \text{ admits a non trivial element of $S_{(f)}$} \\
    &\mathrel{\makebox[7em][c]{\ensuremath{\stackrel{\ref{prop:el_deg_zero},\ref{def:el_deg_zero}(4)}{\iff}}}} 
      \exists h_g \in S\setminus \Fp,\ \exists k, n_h:\ \frac{g^k h_g}{f^{n_h}} \in \psi_f(\Fp) \\
    &\mathrel{\makebox[7em][c]{\ensuremath{\stackrel{\psi_f(\Fp) = \psi_f(\Fp')}{\iff}}}} 
      \exists h_g \in S\setminus \Fp,\ \exists k, n_h:\ \frac{g^k h_g}{f^{n_h}} \in \psi_f(\Fp') \\
    &\mathrel{\makebox[7em][c]{\ensuremath{\stackrel{\ref{prop:el_deg_zero}}{\iff}}}} 
      \exists h_{g'} \in S\setminus \Fp',\ \exists k, n_h:\ \frac{g^k h_{g'}}{f^{n_h}} \in \psi_f(\Fp') \\
    &\mathrel{\makebox[7em][c]{\ensuremath{\stackrel{\ref{def:el_deg_zero}(4)}{\iff}}}} 
      g \in \Fp'_e \text{ such that } (g, f) \text{ admits a non trivial element of $S_{(f)}$.}    
\end{align*}
Now the claim follows by Lemma~\ref{lem:basis_shifted}, (\ref{eq:Bosch_12}) and the fact that $\Fp$ and $\Fp'$ are graded. 
\end{proof}

Regarding the surjectivity, we will make heavy use of Corollary~\ref{cor:rel_many_el_deg_zero}. The proof is rather technical.

\begin{proposition}\label{cor:psi_f_surj}
Let $S$ be a factorially $D$-graded integral domain. Then $\psi_f$ is also surjective.
\end{proposition}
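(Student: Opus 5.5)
Given $\Fq \in \Spec(S_{(f)})$, the natural candidate preimage is the graded ideal
\begin{align*}
    \Fp \deq \bigoplus_{e \in D} \Fp_e, \qquad \Fp_e \deq \{ a \in S_e \mid \tfrac{a^N g_a}{f^k} \in \Fq \text{ for all } N>0,\, k\in\BZ,\, g_a \in h(S) \text{ with } \deg\!\left(\tfrac{a^N g_a}{f^k}\right)=0 \text{ and } g_a\not\in \sqrt{\Fq S_f}\text{-part}\},
\end{align*}
more cleanly: $a\in S_e$ lies in $\Fp$ iff for some (equivalently every) choice of $g_a$ as in Corollary~\ref{cor:rel_many_el_deg_zero} with $g_a/f^{m}$ ``a unit modulo $\Fq$'', the degree-zero element $a^N g_a f^{-k}$ lies in $\Fq$. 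Equivalently, I would define $\Fp$ as the set of homogeneous $a$ such that $a\cdot S_f$ meets $S_{(f)}$ only inside $\Fq$ after multiplying by suitable homogeneous elements not vanishing at $\Fq$; the guiding model is the $\BN$-graded case $\Fp_e = \{a \in S_e : a^{\deg f}/f^{e} \in \Fq\}$, where Corollary~\ref{cor:rel_many_el_deg_zero} replaces the missing canonical exponents.

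The steps are as follows. First, I show the membership condition does not depend on the auxiliary choice of $(g_a,N,k)$. Given two choices $a^{N}g/f^{k}$ and $a^{N'}g'/f^{k'}$, I cross-multiply to compare $a^{NN'}g^{N'}/f^{kN'}$ with $a^{NN'}(g')^{N}/f^{k'N}$. These differ by the degree-zero factor $g^{N'}f^{k'N}/((g')^N f^{kN'})$, which must be a unit at $\Fq$. Here factoriality enters: I use it to split $g$ into $D$-prime factors and to discard factors dividing $f$, which are units in $S_f$. Second, I check that $\Fp$ is a graded ideal. Closure under multiplication by homogeneous $s$ follows by absorbing $s^N$ into $g_a$. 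Closure under sums of homogeneous elements of the same degree $e$ uses a binomial expansion of $(a+b)^{2N}g/f^{k}$, in which each term contains $a^N$ or $b^N$ with a common auxiliary $g$. Third, $\Fp$ is $D$-prime: if $ab\in\Fp$ with $a,b$ homogeneous, then multiplying the chosen degree-zero witnesses for $a$ and $b$ gives a witness for $ab$, which lies in $\Fq$, and primality of $\Fq$ in $S_{(f)}$ forces one of the factors into $\Fq$. Since $f^{m}/f^{m}=1\notin\Fq$, we get $f\notin\Fp$, so $\Fp\in D_+(f)$.

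Fourth, I verify $\psi_f(\Fp)=\Fq$. The inclusion $\Fq\subseteq \Fp S_f\cap S_{(f)}$ holds because, for $a/f^k\in\Fq$, the numerator $a$ satisfies the membership condition with $g_a=1$. For the reverse inclusion, take $a/f^k$ with $a\in\Fp$; then $(a/f^k)^N\cdot(g_a f^{j}/f^{j'})\in \Fq$, where the second factor is a unit at $\Fq$, so $a/f^k\in\Fq$ by primality.

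The main obstacle is the independence-of-choice and unit-at-$\Fq$ issue in the first and third steps. In contrast to the $\BN$-graded case, the auxiliary $g_a$ may itself vanish at $\Fq$, and then the witness is degenerate. To handle this, I will choose $g_a$ as a product of $D$-prime divisors of powers of $f$ together with suitable powers of $a$. This is possible because $f$ is relevant: the degrees of the divisors of powers of $f$ generate a finite-index subgroup of $D$, which is the content of Corollary~\ref{cor:rel_many_el_deg_zero}. Such divisors are units in $S_f$, so $g_a/f^{j}$ is invertible in $S_{(f)}$, and the degeneracy disappears after passing to a multiple $N$ killing the torsion.
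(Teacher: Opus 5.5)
Your proof is correct once the preimage is defined as in your last paragraph. Its skeleton is the paper's, namely the generalization of \cite{Bosch}, Proposition 9.1/13: define $\Fp_e$ through degree-zero witnesses $a^Ng/f^k\in\Fq$, then check
\begin{itemize}
\item additivity by the binomial formula,
\item closure and $D$-primality by multiplying witnesses and using primality of $\Fq$,
\item $f\notin\Fp$, and
\item $\psi_f(\Fp)=\Fq$.
\end{itemize}

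The real difference lies in the auxiliary factor. The paper allows any homogeneous $h\notin\Fp$, a condition that refers to the ideal being defined. It then invokes factoriality to rewrite witnesses as powers or products of one another, in its steps (c) and (e).

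You instead require the auxiliary factor to be a homogeneous divisor of a power of $f$. Equivalently, the witness is $a^N/u$ with $u\in(S_f)^\times$ homogeneous of degree $N\deg(a)$. Such a $u$ exists for some $N>0$ exactly because $D^f$ has finite index. This choice buys three things:
\begin{itemize}
\item $\Fp$ is defined directly from $\Fq$.
\item Membership is independent of the witness for free: after raising to suitable powers, two witnesses differ by a degree-zero unit of $S_f$, i.e.\ a unit of $S_{(f)}$.
\item Every verification reduces to primality of $\Fq$, e.g.\ $(ab)^N/(uv)=(a^N/u)(b^N/v)$.
\end{itemize}
As a consequence, your argument uses only relevance of $f$, not factoriality or integrality. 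The paper's formulation is instead tailored to the language of Definition~\ref{def:el_deg_zero}, which it also uses for injectivity.

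When writing it up, state this definition from the start. Several phrases in the proposal need to go or be replaced:
\begin{itemize}
\item Discard the first display and the phrase ``unit modulo $\Fq$''; $g_a/f^m$ is not of degree zero.
\item Drop ``together with suitable powers of $a$'' and ``absorbing $s^N$ into $g_a$'': the auxiliary factor must remain a unit of $S_f$. Closure under a homogeneous $s$ comes from multiplying the witness of $a$ by $s^N/v\in S_{(f)}$, where $v$ is a homogeneous unit of $S_f$ of degree $N\deg(s)$ and $N$ is chosen common to $a$ and $s$.
\item Factoriality plays no role in your Step~1.
\item The existence of $u$ is the definition of relevance, not Corollary~\ref{cor:rel_many_el_deg_zero}, whose $g_h$ need not divide a power of $f$.
\end{itemize}
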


\begin{proof}
We will follow the proof of \cite{Bosch}, Proposition 9.1/13, and generalize it from $\BZ$-gradings to $D$-gradings. For this generalization, we have to construct a preimage $\Fp \in D_+(f)$ of an ideal $\Fq \in \Spec(S_{(f)})$ for some relevant $f \in S_d$, i.e.\ the grading has to be given explicitly. We claim that the grading is given by
 \begin{align*}
     \Fp_e &\deq \{g\in S_e \mid \exists k,l \in \BZ, h \in h(S)\setminus\Fp: \frac{g^kh}{f^l} \in \Fq \}.
 \end{align*}
 In other words, given $f$, the graded part $\Fp_e$ is exactly the set of all $g \in S_e$ that admit a non-trivial element of degree zero in $S_{(f)}$.
\begin{enumerate}[label=(\alph*)]
    \item $\Fp_e \le S_e$:  Let $g, h \in \Fp_e$. Then there are elements $h_1, h_2 \in h(S)\setminus\Fp$, $k, l, m, p \in \BN$ such that
    \begin{align*}
        \frac{g^kh_1}{f^l} \in \Fq \text{ and } \frac{h^mh_2}{f^p} \in \Fq.
    \end{align*}
    For $t = km$, $s = lp$ and $h_3 = h_1^m h_2^k$, we see that
    \begin{align*}
        \frac{(g+h)^t h_3}{f^s} \in \Fq
    \end{align*}
    by the binomial formula. Hence $g+h \in \Fp_e$.

    \item $S_{d_1} \Fp_{d_2} \subseteq \Fp_{d_1+d_2}$: 
    Let $g \in \Fp_{d_2}$ and $a \in S_{d_1}$. For $ag$ to be in $\Fp_{d_1+d_2}$, we need to find $h' \in h(S)\setminus\Fp$ and $m, p \in \BN$ such that $\frac{(ag)^mh'}{f^p}\in \Fq$. As $g \in \Fp_{d_2}$, we know that there already are elements $h \in h(S)\setminus\Fp$ and $k, l \in \BN$ such that $\frac{g^kh}{f^l} \in \Fq$. In addition, $f$ is relevant, so we know that there exists a positive integer $N$ such that $N\deg(a) \in D^f$ by \cite{paper1}, Lemma 1.15 (3). This means that the degree of $a^N$ coincides with the degree of some homogeneous divisor of $f$ (cf.\  Corollary~\ref{cor:rel_many_el_deg_zero}). So there exist elements $h_a \in h(S)$ and $p' \in \BN$ such that $\frac{a^Nh_a}{f^{p'}} \in \Fq$ and the claim follows for $h' = h^N h_a^k$,  $m=Nk$ and $p = p'+l$.

    \item $\Fp$ is $D$-prime: Let $x \in S_{d_1}$ and $y \in S_{d_2}$ such that $xy \in \Fp_{e_1+e_2}$. By definition, there are elements $h \in h(S)\setminus\Fp$ and $k, l \in \BN$ such that $\frac{(xy)^kh}{f^l}\in \Fq$. We want to write this fraction as a product of two fractions, such that one has a numerator divisible by $x$ but not $y$, and the other way around. For this, we use again that some multiples of $\deg(x)$ and $\deg(y)$ have to be elements of $D^f$, say $x^N$ and $y^M$. Thus there are elements $h_x, h_y \in h(S)$ and $r, s \in \BN$ such that $\frac{x^N h_x}{f^r} \in \Fq$ and $\frac{y^M h_y}{f^s}\in \Fq$. As $S$ is factorially graded, we can assume that there exists some integer $\delta$ such that
    \begin{align*}
        \frac{(xy)^kh}{f^l} \equ \left(\frac{x^N h_x}{f^r} \cdot \frac{y^M h_y}{f^s}\right)^\delta.
    \end{align*}
    In particular, one of the factors has to be an element of $\Fq$ as $\Fq$ is prime. Thus $x \in \Fp$ or $y \in \Fp$ and $\Fp$ is $D$-prime.

    \item $f\not\in\Fp$: Works as in \cite{Bosch}. If $f \in \Fp$ then, $1=\frac{f}{f} \in \Fq$, which is not possible.

    \item $\psi_f(\Fp) = \Fq$: 
    \RI For $x = \frac{a}{f^l}\in \Fq$ we see that $a \in \Fp_{\deg(a)}$ by definition. Hence $x \in \Fp S_f \cap S_{(f)}$.
    \LI Take $x=\frac{g^kh}{f^l}\in \Fp S_f\cap S_{(f)}$.
    Then $g \in \Fp$ (cf.\  Proposition~\ref{prop:el_deg_zero}), so there exists some $h_g \in h(S)\setminus\Fp$ and $r, s \in \BN$ such that $\frac{g^r h_g}{f^s} \in \Fq$. As before, we assume there are elements $h'' \in h(S)$, $t \in \BN$, and a positive integer $N$ such that $\frac{h_g^N h''}{f^t}$ has degree zero, as $f$ is relevant. 
    If $h'' \in \Fp$, we may assume that $\frac{h_g^N h''}{f^t}$ coincides with some power of $\frac{g^kh}{f^l}$ by the factoriality and the claim follows. Hence let $h'' \not\in\Fp$. Then
    \begin{align*}
        x^r \cdot \frac{h_g^N h''}{f^t} \in \Fq ,
    \end{align*}
    as we can factor out $\frac{g^r h_g}{f^s}$. Now as $h_g, h'' \not\in \Fp$, $\frac{h_g^N h''}{f^t}$ cannot be an element of $\psi_f(\Fp)$. Thus $x \in \Fq$ by the prime condition.
\end{enumerate}
\end{proof}

\begin{example}\label{ex:double_origin_C_grading}
    Let $S = \BC[x, y, z]$, where the grading is given by $x \mapsto e_1, y \mapsto e_2, z \mapsto e_1 + e_2$. Now take $f =xz$ and $\Fp = (y)$. Clearly, $y \in \Fp_{(0, 1)}$ and $\frac{y \cdot (x^2z)}{xz} = \frac{xy}{z} \in \psi_f(\Fp)$, i.e.\ $h_y = x^2z \not\in\Fp$. We also have for example $yz \in \Fp_{(1, 2)}$ and for $n_h = 2$, $h_{yz} = x^3$ we get $\frac{yz\cdot x^3}{(xz)^2} = \frac{xy}{z} \in \psi_f(\Fp)$.
    If we take $xy^2 \in \Fp_{(1, 2)}$, we get the same parameters, i.e.\ $n_h = 2$ and $h_{xy^2} = x^3$, as $\frac{(xy^2)x^3}{(xz)^2} = (\frac{xy}{z})^2$. In particular, we see that $yz + xy^2 \in \Fp_{(1, 2)}$.
    Also note that $\psi_{xz}(y) = \psi_{yz}(x) (\frac{xy}{z}) \in \Spec(S_{xz}) = \Spec(S_{(yz)})$.
\end{example}
Altogether we can state (cf.\  \cite{Bosch}, Theorem 9.1/17):

\begin{theorem}\label{thm:Proj_D=Proj^D}
    Let $S$ be a factorially $D$-graded integral domain and $(S, B)$ a conical ring. Then we can equip its multihomogeneous $D$-prime spectrum $\Proj_D^B(S)$ with the structure of a scheme in such a way, that for relevant $f \in B$ the homeomorphisms $\psi_f\colon D_+(f) \to \Spec(S_{(f)})$ become isomorphisms of schemes and thus give rise to the affine open covering of $\Proj^B_D(S)$ by the sets $D_B(f)$. The resulting scheme is $\Proj^D_B(S)$, so we have an isomorphism $\Proj^D_B(S) \cong \Proj^B_D(S)$. Therefore we also call $\Proj^D_B(S)$ the $\Proj$ scheme associated to $(S, B)$.
\end{theorem}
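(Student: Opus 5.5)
The proof follows the classical argument of \cite{Bosch}, Theorem 9.1/17, and glues the affine schemes $\Spec(S_{(f)})$ along the homeomorphisms $\psi_f$.

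\emph{Step 1: the underlying space.} By Propositions~\ref{prop:psi_f_injective} and~\ref{cor:psi_f_surj}, $\psi_f$ is a continuous bijection $D_B(f)\to\Spec(S_{(f)})$ for every relevant $f\in B$. To see that it is open, we use equivalence (\ref{eq:Bosch_12}). It says that $\psi_f$ maps $D_B(f)\cap D_+(h)$ onto the basic open $\Spec(S_{(f)})_{h/f^n}$ for relevant $h\in S_{nd}$. By Lemma~\ref{lem:basis_shifted}, such sets form a basis of $D_B(f)$, so $\psi_f$ is a homeomorphism. Moreover, $\Proj^B_D(S)=\bigcup_{f\in B\ \mathrm{relevant}}D_B(f)$, because a $D$-prime $\Fp$ with $B\not\subset\Fp$ misses some homogeneous element of $B$. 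We reduce to relevant elements of $B$ using the basis statement of Proposition~\ref{prop:hom_top}; this needs that $B$ is generated by relevant elements, which we assume, since $B\subseteq S_+$ is meant in this sense.

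\emph{Step 2: the sheaf and the compatibility on overlaps.} We transport $\CO_{\Spec(S_{(f)})}$ to $D_B(f)$ via $\psi_f$. For two relevant $f,g\in B$, the product $fg$ is again relevant and $D_B(f)\cap D_B(g)=D_B(fg)$. By (\ref{eq:Bosch_12}), $\psi_f(D_B(fg))$ is the open subset of $\Spec(S_{(f)})$ where the elements of the multiplicative set $\CG$ of Lemma~\ref{lem:9.1/7_gen} do not vanish. That lemma identifies $S_{(fg)}\cong(S_{(f)})_\CG$ canonically. Hence the restricted structure sheaves from the $f$-chart and from the $g$-chart are both identified with $\CO_{\Spec(S_{(fg)})}$ on $D_B(fg)$. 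The resulting transition isomorphisms are induced by the inclusions $S_{(f)},S_{(g)}\hookrightarrow S_{(fg)}$ inside $\Quot(S)$. Because all these maps are inclusions into one common field $\Quot(S)_0$, they automatically satisfy the cocycle condition on triple overlaps. The standard gluing lemma for sheaves then gives a scheme structure on $\Proj^B_D(S)$ in which each $\psi_f$ is an isomorphism of schemes.

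\emph{Step 3: identification with $\Proj^D_B(S)$.} By definition, $\Proj^D_B(S)$ is the union of the $\Spec(S_{(f)})$ inside $\CQ(S)$, glued along the same localizations $S_{(fg)}$; this uses that the sheaf of $\CQ(S)$ agrees on basic opens with the sheaf of the affine pieces. The charts $\psi_f^{-1}$ are compatible with the gluing data of Step 2. They therefore assemble to a morphism $\Proj^D_B(S)\to\Proj^B_D(S)$ that is an isomorphism on each chart, hence an isomorphism.

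\emph{Main obstacle.} I expect the hardest step to be the overlap compatibility in Step 2. One must check that the open set of $\Spec(S_{(f)})$ corresponding to $D_B(fg)$ is precisely the locus where $\CG$ is inverted, i.e.\ the non-vanishing locus of finitely many degree-zero elements. I would first use Corollary~\ref{cor:rel_many_el_deg_zero} to choose $N$ and $h$ with $\deg(g^Nh/f^k)=0$. I would then show that $D_B(fg)=D_B(f)\cap D_+(g^Nh)$ up to the factor $h$, applying Proposition~\ref{prop:el_deg_zero} to guarantee $h\notin\Fp$.
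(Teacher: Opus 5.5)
Your route is the paper's. Its proof of Theorem~\ref{thm:Proj_D=Proj^D} only combines Proposition~\ref{prop:hom_top} with the bijectivity of $\psi_f$ (Propositions~\ref{prop:psi_f_injective} and~\ref{cor:psi_f_surj}) and leaves the gluing of \cite{Bosch}, 9.1/17, implicit. The overlap compatibility appears only afterwards, in Lemma~\ref{lem:9.1/14_gen} and Corollary~\ref{cor:intersection_and_open_subsets_of_hom_elements}(b). Steps~1 and~3 and your cocycle argument inside the common field $\Quot(S)_0$ are fine. The overlap step, which you rightly single out, does not go through as planned, for two reasons.

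\begin{itemize}
\item Proposition~\ref{prop:el_deg_zero} assumes $g\in\Fp$. To get $D_B(fg)=D_B(f)\cap D_+(g^Nh)$ you need $h\notin\Fp$ for the primes $\Fp\in D_B(f)$ with $g\notin\Fp$, where it says nothing.
\item The factor supplied by Corollary~\ref{cor:rel_many_el_deg_zero} is an arbitrary homogeneous element and can lie in such a prime. In Example~\ref{ex:not_rel_no_homeo} with $B=S_+$, $f=xz$, $g=yz$, the element $h=x^4(xy-z)$ satisfies $\deg(gh)=\deg(f^3)$. Yet $\Fp=(xy-z)$ lies in $D_B(fg)$ but not in $D_+(gh)$.
\end{itemize}

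Likewise, \eqref{eq:Bosch_12} does not justify reading $\psi_f(D_B(fg))$ as ``where $\CG$ does not vanish.'' The set $\CG$ of Lemma~\ref{lem:9.1/7_gen} is generated by fractions whose denominators divide $g$ (or are of mixed type). These are in general not elements of $S_{(f)}$, and there may be infinitely many of them. So that locus is neither clearly defined in $\Spec(S_{(f)})$ nor visibly open.

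The missing point is to take the auxiliary factor from a homogeneous \emph{unit} of $S_f$.
\begin{itemize}
\item Since $D^f$ has finite index, $-N\deg(g)\in D^f$ for some $N>0$, so there is a unit $a/f^m\in S_f$ of this degree. Its inverse $b/f^n$ gives $ab=f^{m+n}$, so $a\notin\Fp$ for every $\Fp\in D_B(f)$.
\item Put $t=g^Na/f^m\in S_{(f)}$. For $\Fp\in D_B(f)$ you get $t\in\psi_f(\Fp)$ iff $g\in\Fp$, so $\psi_f(D_B(fg))=D(t)$.
\item Since $t$ is $g^N$ times a unit of $S_f$ and has degree $0$, $S_{fg}=(S_f)_t$ and hence $S_{(fg)}=(S_{(f)})_t$.
\end{itemize}
This single-element analogue of \cite{Bosch}, Lemma 9.1/7 (cf.\ \cite{May}, Proposition 2.10) gives the open immersion $\Spec(S_{(fg)})\to\Spec(S_{(f)})$ onto $\psi_f(D_B(fg))$ without Lemma~\ref{lem:9.1/7_gen}. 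Combined with $\psi_{fg}(\Fp)\cap S_{(f)}=\psi_f(\Fp)$, which follows from $D$-primality and $fg\notin\Fp$, it gives exactly the identifications your gluing needs. The same trick proves openness in Step~1 for every homogeneous $h$, so you need not route through Lemma~\ref{lem:basis_shifted}.

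Finally, your extra hypothesis that $B$ be generated by relevant elements is unnecessary. If $b\in B\setminus\Fp$ is homogeneous, then $S_+\not\subseteq\Fp$ because $B\subseteq S_+$, so some relevant $r\notin\Fp$. Then $br\in B$ is relevant (as $S_{br}$ is a localization of $S_r$) and $br\notin\Fp$.
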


\begin{proof}
Directly follows from Proposition~\ref{prop:hom_top}, Proposition~\ref{prop:psi_f_injective} and Proposition~\ref{cor:psi_f_surj}.
\end{proof}

Having proved this structural result, we can deduce the following.

\begin{corollary}\label{cor:intersection_and_open_subsets_of_hom_elements}
Let $S$ be a factorially $D$-graded integral domain, $B \subseteq S_+$, $h \in S$ homogeneous, and $f, g \in \Rel^D_B(S)$ be relevant.
\begin{enumerate}[label=(\alph*)]
    \item If $D$ is finite or $1$ is relevant, then  
        \begin{align*}
        \Proj^D(S) \equ \Spec(S_0) \equ  \Proj_D(S)   \equ \DSpec(S)\ .
    \end{align*}
    \item It holds 
    \begin{align*}
        \Spec(S_{(f)}) \cap \Spec(S_{(g)}) \equ \Spec(S_{(fg)}).
    \end{align*}
In particular, $\Proj^D(S)$ can be seen as the scheme given by the gluing datum given by the $D_+(f)$'s with intersection $D_+(fg)$.
    \item The open subset associated to $h$ is given by 
    \begin{align*}
        D_+(h) \equ \bigcup_{\substack{f \in \Rel^D(S) \\ h \mid f}} D_+(f) \subseteq \Proj^D(S)\ .
    \end{align*}
    If $S$ is also noetherian, we can take the union over all $f \in \Gen^D(S)$ such that $h \mid f$.
\end{enumerate}
\end{corollary}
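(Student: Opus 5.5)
The plan is to derive all three parts from Theorem~\ref{thm:Proj_D=Proj^D}, which identifies $\Proj^D_B(S)$ with $\Proj^B_D(S)$ and each $\Spec(S_{(f)})$ with $D_+(f)$ via $\psi_f$. Each statement is then a statement about $D$-prime ideals, proved with Proposition~\ref{prop:hom_top}.

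\emph{Part (a).} If $D$ is finite, every homogeneous element is relevant, in particular $1$; so it suffices to assume $1 \in \Rel^D(S)$. Then $S_+ = (1) = S$, and no proper ideal contains $S_+$. Hence $\Proj_D(S) = \DSpec(S)$ and $D_+(1) = \Proj_D(S)$. Since $S_{(1)} = S_0$, the homeomorphism $\psi_1$ of Propositions~\ref{prop:psi_f_injective} and~\ref{cor:psi_f_surj} identifies $D_+(1)$ with $\Spec(S_0)$. Because $\Spec(S_0) = \Spec(S_{(1)})$ is one of the affine charts and for any relevant $f$ the ring $S_{(f)}$ is a localization of $S_0$ (Lemma~\ref{lem:9.1/7_gen} with $f$ replaced by $1$ and $g$ by $f$), all other charts are open in $\Spec(S_0)$. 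Therefore $\Proj^D(S) = \Spec(S_0)$, and the theorem gives the remaining identification.

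\emph{Part (b).} Inside $\Proj_D(S)$ we have $D_+(f) \cap D_+(g) = D_+(fg)$: a $D$-prime $\Fp$ avoids $fg$ if and only if it avoids both $f$ and $g$, using that $\Fp$ is prime on homogeneous elements (Proposition~\ref{prop:hom_top}(iv)). Next, $fg$ is relevant, since $D^f \subseteq D^{fg}$ because $S_f \to S_{fg}$ maps homogeneous units to homogeneous units. So $\psi_{fg}$ identifies $D_+(fg)$ with $\Spec(S_{(fg)})$. It remains to check compatibility: under $\psi_f$, the set $D_+(fg)$ must correspond to the open subscheme $\Spec(S_{(f)})_{\CG} \cong \Spec(S_{(fg)})$ of Lemma~\ref{lem:9.1/7_gen}. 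This is the main obstacle, because the localization there is at the multiplicative set $\CG$ rather than at a single element. I would handle it with the characterization in (\ref{eq:Bosch_12}) and Lemma~\ref{lem:basis_shifted}. Choose a relevant $h$ with $\deg(h) \in \BN\deg(f)$ and $D_+(h) \subseteq D_+(fg)$, for instance a suitable $(fg)^a u$ obtained from Corollary~\ref{cor:rel_many_el_deg_zero}. Then $h/f^n \in S_{(f)}$, and (\ref{eq:Bosch_12}) gives $\psi_f(D_+(h)) = D(h/f^n)$. Covering $D_+(fg)$ by such sets yields the equality of open subsets. The gluing-datum statement then follows because cocycle conditions hold automatically for open subschemes of a common space.

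\emph{Part (c).} For the inclusion $\supseteq$: if $h \mid f$, then $f \notin \Fp$ forces $h \notin \Fp$, since $\Fp$ is an ideal. For $\subseteq$: let $\Fp \in D_+(h)$. Because $\Fp \in \Proj_D(S)$, there is a relevant $f_0 \notin \Fp$. Then $f = hf_0$ is relevant, as $D^{f_0} \subseteq D^{hf_0}$, it is divisible by $h$, and it lies outside $\Fp$ by $D$-primality. In the noetherian case, replace $f_0$ by a generator in $\Gen^D(S)$ not in $\Fp$, which exists since $S_+ \not\subset \Fp$. Then pass to a minimal generator dividing $hf_0$ that still contains the prime factors of $h$, using factoriality.
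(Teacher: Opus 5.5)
Your arguments for (a), (b) and the first equality in (c) are sound, and for (a) and (b) they follow the paper's route: the homeomorphism $\psi_1$ for (a), and $D_+(f)\cap D_+(g)=D_+(fg)$ together with $\psi_f,\psi_g,\psi_{fg}$ for (b). For the first equality in (c), your direct choice $f=hf_0$ is a self-contained replacement for the paper's appeal to \cite{paper1}, Lemma 1.30.

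The genuine gap is the noetherian refinement in (c). The step ``pass to a minimal generator dividing $hf_0$ that still contains the prime factors of $h$'' fails for two reasons. First, containing the prime factors of $h$ does not give $h\mid f$. Second, such a generator need not exist, because elements of $\Gen^D(S)$ are minimal; in the polynomial case they are square-free products of exactly $r$ variables. For instance, let $S=\BC[x,y]$ with $\deg x=\deg y=1\in\BZ$, so $\Gen^D(S)=\{x,y\}$, and read $D_+(h)=\{\Fp\in\Proj_D(S)\mid h\notin\Fp\}$ as you do. For $h=x^2$ your procedure yields $x$, which is not divisible by $h$. For $h=xy$ it yields nothing at all. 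In both cases $(0)\in D_+(h)$, while the union over $f\in\Gen^D(S)$ with $h\mid f$ is empty. So this step cannot be repaired: the refinement as literally stated needs an extra hypothesis on $h$. What your argument does establish is $D_+(h)=\bigcup_{f\in\Gen^D(S)}D_+(hf)$, with each $hf$ relevant and divisible by $h$.

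There are two smaller points. In (b), the compatibility you flag is not part of the paper's proof of this corollary, which leans on Theorem~\ref{thm:Proj_D=Proj^D}; the diagram only appears later as Lemma~\ref{lem:9.1/14_gen}. Your covering sketch is incomplete as written. Corollary~\ref{cor:rel_many_el_deg_zero} does not let you pick $u\notin\Fp$, and you never relate the sets $D(h/f^n)$ to $\CG$. A cleaner route avoids $\CG$ altogether:
\begin{enumerate}
\item Since $D^f$ has finite index, there are $N>0$ and a homogeneous unit $a/f^k$ of $S_f$ of degree $-N\deg(g)$.
\item Then $a$ divides a power of $f$, so $t=g^Na/f^k\in S_{(f)}$ satisfies $D_+(fg)=D_+(f)\cap D_+(g^Na)$.
\item Hence $\psi_f(D_+(fg))=D(t)$ by (\ref{eq:Bosch_12}), and a direct computation gives $S_{(fg)}=(S_{(f)})_t$.
\end{enumerate}
In (a), Lemma~\ref{lem:9.1/7_gen} is formally unavailable for $f=1$, since Definition~\ref{def:el_deg_zero} assumes that $1$ is not relevant. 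Use instead $S_{(f)}=(S_0)_{f^N/u}$, where $u\in S$ is a homogeneous unit of degree $N\deg(f)$; such $u$ and $N>0$ exist because $1$ is relevant.
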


\begin{proof}
\begin{enumerate}[label=(\alph*)]
    \item As $1$ is relevant, $\psi_1$ is a homeomorphism between $D_+(1) = \Proj_D(S)$ and $\Spec(S_{(1)}) = \Spec(S_0)$, where the latter is equal to $\Proj^D(S)$ by definition.
Since $V_+(S_+) = \emptyset$, the last equation follows.

    \item  As $D_+(f) \cap D_+(g) = D_+(fg)$, the claim follows because $\psi_f$, $\psi_g$ and $\psi_{fg}$ are homeomorphisms.

    \item We apply Theorem~\ref{thm:Proj_D=Proj^D} to \cite{paper1}, Lemma 1.30.
\end{enumerate}   
\end{proof}


\begin{remark}
    Lemma 3.4 in \cite{May} shows that the Brenner--Schröer construction coincides with the gluing construction, where affine opens are given by $\Spec(S_{(f)})$ for relevant $f$. 
    In particular, they prove that the canonical maps $S_{(fg)} \to S_{(f)}$ induce open immersions on spectra. 
    However, be aware that our notations for $D_+(f)$ differ.
\end{remark}


Also, the generalization of \cite{Bosch}, Lemma 9.1/14 holds true.

\begin{lemma}\label{lem:9.1/14_gen}
   Let $S$ be a factorially $D$-graded integral domain and $f, g \in \Gen^D_B(S)$ be elements of degree $d$, respectively $e$. 
    Then there is a canonical commutative diagram
    \begin{align*}
    \xymatrix{
      D_+(fg) \ar[rr]^{\psi_{fg}} \ar[dd]^{\iota} &   & \Spec(S_{(fg)}) \ar[dd]^{\sigma}  \\
      & & \\
      D_+(f)   \ar[rr]^{\psi_f} &    & \Spec(S_{(f)})         \, ,
      }
    \end{align*}
    where $\iota$ is the canonical inclusion and $\sigma$ is the open immersion obtained from the map of Lemma~\ref{lem:9.1/7_gen}:
    \begin{align*}
        S_{(f)} \to (S_{(f)})_\CG \stackrel{\sim}{\leftarrow} S_{(fg)}
    \end{align*}
\end{lemma}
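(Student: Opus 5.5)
The plan is to check commutativity pointwise on $D_+(fg)$. For $\Fp \in D_+(fg)$ we must show $\sigma(\psi_{fg}(\Fp)) = \psi_f(\Fp)$. Here $\sigma$ is induced by the ring map $\tau\colon S_{(f)} \to (S_{(f)})_\CG \cong S_{(fg)}$, so $\sigma(\Fq) = \tau^{-1}(\Fq)$. Concretely, $\tau$ sends $a/f^l$ to $a g^l/(fg)^l$. It is the composite of the localization map with the inverse of $\varphi_{f,g}$ from Lemma~\ref{lem:9.1/7_gen}.

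The first step is to make $\tau$ explicit and check it is compatible with the obvious inclusion $S_f \to S_{fg}$. Both are restrictions of the inclusion into $\Quot(S)$, since $S$ is a domain and all rings involved are subrings of $\Quot(S)$. Thus $\tau$ is simply the inclusion $S_{(f)} \subseteq S_{(fg)}$ inside $\Quot(S)$. We then need
\begin{align*}
(\Fp S_{fg} \cap S_{(fg)}) \cap S_{(f)} \equ \Fp S_f \cap S_{(f)}.
\end{align*}
The inclusion ``$\supseteq$'' is immediate, because $\Fp S_f \subseteq \Fp S_{fg}$.

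For ``$\subseteq$'', take $x = a/f^l \in S_{(f)}$ with $x \in \Fp S_{fg}$. Then $x = b/(fg)^m$ for some homogeneous $b \in \Fp$, which gives $a g^m f^{m} = b f^{l}$ up to adjusting exponents. Hence $a (fg)^{m} \in \Fp$ after clearing denominators. Since $\Fp$ is $D$-prime and $f, g \notin \Fp$ (as $fg \notin \Fp$ and $\Fp$ is $D$-prime), we get $a \in \Fp$. Therefore $x \in \Fp S_f \cap S_{(f)}$. The homogeneity needed for the $D$-prime property holds because $a$, $f$ and $g$ are homogeneous.

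Once commutativity holds, the open-immersion claim for $\sigma$ is already supplied by Lemma~\ref{lem:9.1/7_gen}, since a localization gives an open immersion on spectra. Alternatively, it follows from Proposition~\ref{prop:psi_f_injective}, Proposition~\ref{cor:psi_f_surj} and Corollary~\ref{cor:intersection_and_open_subsets_of_hom_elements}(b): the image of $\sigma$ is $\psi_f(D_+(fg))$, which is open by (\ref{eq:Bosch_12}).

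The main obstacle is making sure that the canonical isomorphism $\varphi_{f,g}$ of Lemma~\ref{lem:9.1/7_gen} really is compatible with the inclusion in $\Quot(S)$. That lemma was proved by comparing minimal generating systems, not by writing down an explicit map. I would fix this by defining $\varphi_{f,g}$ as the map induced by the universal property of localization, applied to $S_{(f)} \hookrightarrow S_{(fg)}$. This works because every element of $\CG$ becomes invertible in $S_{(fg)}$: its denominator divides a power of $fg$, and its inverse is again of degree zero. With that definition, the lemma's argument shows the map is bijective, and commutativity of the diagram reduces to the computation above.
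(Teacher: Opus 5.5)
Your argument is correct and is essentially the paper's. The paper gets the open immersion from $\psi_f$ and $\psi_{fg}$ being homeomorphisms (your second justification) and cites Bosch, Lemma 9.1/14 for commutativity, which is exactly the contraction computation $(\Fp S_{fg}\cap S_{(fg)})\cap S_{(f)}=\Fp S_f\cap S_{(f)}$ that you carry out with $D$-primality in place of primality; two small repairs are needed: an element of $\CG\subseteq S_{(f)}$ becomes a unit in $S_{(fg)}$ because its \emph{numerator} divides a power of $fg$ (that its denominator does is automatic and proves nothing), and localizing at an arbitrary multiplicative set need not give an open immersion (e.g.\ $\Spec(\BZ_{(p)})\to\Spec(\BZ)$), so rely on your second argument for that part.
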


\begin{proof}
    By \cite{Bosch}, Corollary 6.2/8 and as $\psi_f$ is a homeomorphism, we know that $\Spec(S_{(fg)}) = D_+(f) \cap D_+(g)$ and $\Spec(S_{(f)}) = D_+(f)$. In particular, $\sigma$ is an open immersion of schemes. The commutativity of the diagram follows from \cite{Bosch}, Lemma 9.1/14.
\end{proof}

\begin{definition}\label{def:B_vanishing} Let $(S, B)$ be a conical ring.
    For $Y \subseteq \Proj_B^D(S)$ we define the \emph{$B$-vanishing ideal} to be 
    \begin{align*}
        I_B(Y) \deq \{f \in B \mid Y \subseteq V_+(f)\} \equ I(Y) \cap B,
    \end{align*}
    where $I(Y) = \cap_{y\in Y} \Fp_y$, and for $f \in S$ we have $f \in \Fp_y$ if and only if $f(y) = 0$. For $B = S_+$ we denote $I_B(Y) = I_+(Y)$.
\end{definition}

The following statement is the multigraded version of the Nullstellensatz and the generalization of \cite{Bosch}, Corollary 9.1/15.
As far as we know, this version has not yet been stated.

\begin{corollary}[Multigraded Nullstellensatz]\label{cor:9.1/15_gen}
Let $(S, B)$ be a conical ring, such that $S$ is factorially $D$-graded.
\begin{enumerate}[label=(\alph*)]
    \item For any subset $E \subseteq B$ such that $(E)$ is a radical ideal, it holds $I_B(V_B(E)) = \rad_B(\Fa)$, where $\Fa$ is the restriction to $B$ of the graded ideal generated by $E$ in $S$ and $\rad_B(\Fa) = \rad(\Fa) \cap B$.
    \item For any subset $Y \subseteq \Proj^D_B(S)$, it holds: 
    \begin{align*}
        \overline{Y} \equ V_+(I_B(Y)) .
    \end{align*}
\end{enumerate}
\end{corollary}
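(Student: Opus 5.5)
The plan is to transfer the classical argument of \cite{Bosch}, Corollary 9.1/15, working in the homogeneous Zariski topology of Proposition~\ref{prop:hom_top}. The key input is that $D$-prime ideals in $\Proj^D_B(S)$ behave like prime ideals towards homogeneous elements. This is exactly what Proposition~\ref{prop:hom_top}(iv),(v) records, together with the homeomorphisms $\psi_f$ of Theorem~\ref{thm:Proj_D=Proj^D}. Throughout I read $V_B(E)$ as $V_+(E)\cap \Proj^B_D(S)$.

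For (a), the inclusion $\rad_B(\Fa)\subseteq I_B(V_B(E))$ is formal. Take $g\in B$ with $g^n\in\Fa$. Let $\Fp\in V_B(E)$ be arbitrary; then $\Fa\subseteq\Fp$. Decompose $g$ into homogeneous components. I use that the radical of a graded ideal is graded when $D$ is torsion-free; in the torsion case this is exactly what the radical hypothesis on $(E)$ in Proposition~\ref{prop:hom_top}(v) provides. The $D$-prime property then gives $g\in\Fp$.

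For the reverse inclusion, let $g\in B$ lie in every $\Fp\in V_B(E)$, and suppose $g\notin\rad(\Fa)$. By Lemma~\ref{lem:basis_shifted} I may reduce to homogeneous $g$ and, after multiplying by a suitable homogeneous element, also assume that $g$ is relevant. Then $\Fa S_g\cap S_{(g)}$ is a proper ideal of $S_{(g)}$. Indeed, if it contained $1$, then $g^N\in\Fa$ for some $N$, because degree-zero fractions $a/g^N$ with $a\in\Fa$ equal to $1$ force $a=g^N$ in the domain $S$. So there is a prime $\Fq\supseteq \Fa S_g\cap S_{(g)}$ in $\Spec(S_{(g)})$. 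By surjectivity (Proposition~\ref{cor:psi_f_surj}) there is $\Fp\in D_+(g)$ with $\psi_g(\Fp)=\Fq$.

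It remains to check $\Fa\subseteq\Fp$. For homogeneous $a\in\Fa$, Corollary~\ref{cor:rel_many_el_deg_zero} gives $h$, $N$ and $k$ with $a^Nh/g^k$ of degree zero. This element lies in $\Fa S_g\cap S_{(g)}\subseteq\Fq$. By the explicit description of $\Fp_e$ in the proof of Proposition~\ref{cor:psi_f_surj}, this yields $a\in\Fp$. Hence $\Fp\in V_B(E)$ while $g\notin\Fp$, a contradiction.

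For (b), $V_+(I_B(Y))$ is closed and contains $Y$, so $\overline Y\subseteq V_+(I_B(Y))$. Conversely, write $\overline Y=V_+(E)$ for some homogeneous (radical) $E$. By Proposition~\ref{prop:hom_top}, closed sets are cut out by relevant elements, so I may take $E\subseteq B$ and $(E)$ radical. Then $Y\subseteq V_+(E)$ gives $E\subseteq I_B(Y)$, and hence $V_+(I_B(Y))\subseteq V_+(E)=\overline Y$.

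The main obstacle is the step in (a) showing $\Fa\subseteq\Fp$ for the constructed preimage. This requires that every homogeneous element of $\Fa$ is detected in degree zero relative to $g$, which relies on relevance of $g$ and factoriality. The reduction to relevant $g$ (replacing $g$ by $gg'$, and checking that $g\notin\rad\Fa$ is preserved, or else working on each $D_+(f)$ with $f$ relevant and $g\mid f$ via Corollary~\ref{cor:intersection_and_open_subsets_of_hom_elements}(c)) also needs care. In that case I would argue chart by chart: $g\notin\rad\Fa$ yields some relevant $f$ with $g/1$ non-nilpotent modulo $\Fa S_f$ in the appropriate degree-zero ring.
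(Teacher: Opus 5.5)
Your part (b) is the paper's argument. Your part (a) follows the same Bosch-style route: reduce to a relevant homogeneous $g$, take a prime of $S_{(g)}$, and pull it back along $\psi_g$. There is one worthwhile difference. The paper reduces to $\Fa=0$ by passing to $S/\Fa$, where surjectivity of $\psi_g$ is not available, since $S/\Fa$ is in general neither a domain nor factorially graded. You stay in $S$ and take $\Fq\supseteq \Fa S_g\cap S_{(g)}$, so Proposition~\ref{cor:psi_f_surj} is applied where it is proved.

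The price is the step $\Fa\subseteq\Fp$, and as written it does not go through. Corollary~\ref{cor:rel_many_el_deg_zero} gives some homogeneous $h$ with $\deg(a^Nh/g^k)=0$, but says nothing about $h$. From $a^Nh/g^k\in\Fq=\Fp S_g\cap S_{(g)}$ you only get $a^Nhg^t\in\Fp$, and $D$-primality then yields $a\in\Fp$ \emph{or} $h\in\Fp$. Citing the description of $\Fp_e$ in the proof of Proposition~\ref{cor:psi_f_surj} does not help. That description explicitly requires $h\notin\Fp$, and as written it refers to $\Fp$ in its own definition.

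The missing idea is to take $h$ from a unit of $S_g$. Since $g$ is relevant, $D^g$ has finite index, so $-N\deg(a)\in D^g$ for some $N>0$. Hence there is a homogeneous unit $u=h/g^k$ of $S_g$ of that degree. Then $a^Nu\in \Fa S_g\cap S_{(g)}\subseteq\Fq\subseteq \Fp S_g$, so $a^N=(a^Nu)u^{-1}\in\Fp S_g$. In the domain $S$ this gives $a^Ng^t\in\Fp$ for some $t$, and $a\in\Fp$ because $\Fp$ is $D$-prime and $g\notin\Fp$.

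The reduction you left open also closes easily. For homogeneous $g\in S_+$, write $g=\sum c_ir_i$ with $r_i$ relevant. If every $gr_i$ lay in $\rad(\Fa)$, then so would $g^2$. Hence some $gr_i\notin\rad(\Fa)$, and $gr_i$ is relevant because $D^{gr_i}\supseteq D^{r_i}$. The reduction to homogeneous $g$ uses that $\rad(\Fa)$ is graded, not Lemma~\ref{lem:basis_shifted}.

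Finally, (a) can avoid $\psi_g$ and factoriality altogether. If $P$ is a prime with $\Fa\subseteq P$ and $g\notin P$ for homogeneous $g$, then the ideal generated by the homogeneous elements of $P$ is $D$-prime, contains the graded ideal $\Fa$, and misses $g$. It is therefore the required point of $V_B(E)$.
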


\begin{proof}
\begin{enumerate}[label=(\alph*)]
    \item By Proposition~\ref{prop:hom_top} we have $V_+(E) = V_+(\rad(E)) = V_+(\Fa)$. As \cite{Bosch} indicates, it is enough to show that $\rad_+(\Fa)$ is the intersection of all $D$-prime ideals in $B$ containing $\Fa$. Thus, replacing $S$ with the quotient, we assume $\Fa = 0$. If some element $f \in S$ is not nilpotent, there exists at least one homogeneous component of $f$ that is not nilpotent. Hence, take some non-nilpotent $f \in S$ and assume without loss of generality that $f$ is homogeneous. Moreover, by replacing $f$ with $fg$ for some homogeneous element $g \in S$, we may assume that $f$ is relevant.
    As in \cite{Bosch}, it is enough to show that there exists a graded $D$-prime ideal $\Fp \unlhd S$ not containing $f$. But since $f$ is not nilpotent, $S_{(f)}$ is nonzero and hence contains a prime ideal $\Fq \in \Spec(S_{(f)})$. Then Theorem~\ref{thm:Proj_D=Proj^D} tells us that $\Fq$ corresponds to a graded $D$-prime ideal $\Fp \unlhd S$ such that $\Fp \in D_+(f)$ and hence $f \not\in \Fp$.

    \item Let $\Fa \subseteq B$ be a graded ideal of $S$ such that $Y \subseteq V_+(\Fa)$. Since $V_+(\Fa) = \bigcap_{f \in \Fa\cap \Rel^D_B(S)} V_+(f)$, $Y \subseteq V_+(f)$ for all $f$ and therefore $I_B(Y) \supset \Fa$. We conclude that $V_+(I_B(Y)) \subset V_+(\Fa)$ and hence $V_+(I_B(Y)) \subset \overline{Y}$. But clearly $Y \subset V_+(I_B(Y))$ and the claim follows.
\end{enumerate}
\end{proof}

\section{Separation Criteria}\makebox{}{}\label{sec:sep_crit}\\
There are many ways to think about the separability of $\Proj^D(S)$.
For example, in \cite{paper2} we showed that for semiprojective toric varieties $X_\Sigma$, $S = \Cox(X_\Sigma)$ and $D = \Cl(X_\Sigma)$, the scheme $\Proj^D(S)$ is exactly the direct limit of all GIT quotients of $X_\Sigma$. Hence, the global non-separatedness of Proj seems almost natural.

In this paper, we will present a straightforward algebraic approach to determine separated subsets, which comes with a technical flavor.
We want to emphasize, that in fact all that is required for separability is the surjectivity of all the multiplication maps $\mu_{(fg)} \colon S_{(f)} \otimes S_{(g)} \to S_{(fg)}$ for $f, g \in \Rel^D(S)$ (cf.\  \cite{BS}, proof of Proposition 3.3 and \cite{Bosch}, §9.1).
For more details on this subject, we refer to the author's thesis\footnote{\url{https://sites.google.com/view/felixgoebler/home/research}}.



\begin{example}\label{ex:double_origin_sep}
    In the situation of Example~\ref{ex:double_origin_C_grading}, it holds that $S_{(yz)} = S_{(xz)} = S_{(z)} = \BC[\frac{xy}{z}]$. Since $S_{(xz\cdot yz)} = \BC[\frac{xy}{z}, \frac{z}{xy}]$, $\mu_{(xz\cdot yz)}$ is not surjective.
    As $\Proj^D(S)$ is covered by the basic affine opens $D_+(f)$ for $f = xy,xz,yz$, it cannot be separated.
Note that the non-separatedness mainly follows from the left-hand side of the linear dependency $\deg(xy) = \deg(z)$ in $D_\BR$. In fact, we expect $\Proj^D(S)$ to be non-separated as soon as there is a linear dependency of that type.
\end{example}

Therefore, we want to formalize Example~\ref{ex:double_origin_sep}.

\begin{definition}
Let $S$ be a factorially $D$-graded ring.
\begin{enumerate}[label=(\arabic*)]\label{def:weak}
    \item A homogeneous element $s \in S$ is called \emph{$D$-invertible}, if there exists a coprime homogeneous $r \in S$ such that $(r, s)$ is a non-trivial element of degree zero in $\Quot(S)$.

        \item  Let $f, g \in \Gen^D_B$. We say that $(f, g)$ is a \emph{weak pair} (or only weak), if there is a $D$-invertible element $h$ satisfying $h \mid fg$ but $h \not\mid f$ and $h \not\mid g$, such that there exists an element $x \in S$ such that $(x, h)$ is a generator of degree zero in $\Quot(S)$.  

    \item We say that $f \in \Gen^D_B$ is \emph{weak}, if there is a relevant element $g \in \Gen^D_B$ such that $(f, g)$ is weak.  
\end{enumerate}

    Note that each $D$-invertible element $s \in S$ corresponds to a linear dependency of type $\deg(s) = \deg(f \cdot g)$ for some $f, g \in S$ (where possibly $f=1$ or $g = 1$). We will call an irreducible proper homogeneous divisor of a generator of degree zero (i.e.\ it divides the numerator or the denumerator) a \emph{linear dependent variable of $S$}. 
\end{definition}

The strength of this language lies in the following fact

\begin{proposition}\label{prop:surj_cond}
Let $S$ be factorially graded.
The relevant pair $(f, g)$ for $f, g \in \Gen^D$ is weak if and only if $\mu_{(fg)}$ is not surjective.
\end{proposition}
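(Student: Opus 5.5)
The approach is to compare $S_0$-algebra generating sets, exactly as in the proof of Lemma~\ref{lem:9.1/7_gen}. The image of $\mu_{(fg)}\colon S_{(f)}\otimes S_{(g)}\to S_{(fg)}$ is the $S_0$-subalgebra of $S_{(fg)}$ generated by $S_{(f)}$ and $S_{(g)}$. So $\mu_{(fg)}$ is surjective if and only if every generator of degree zero of $S_{(fg)}$, in reduced fractional form $\frac{r}{s}$, already lies in $S_{(f)}$ or in $S_{(g)}$.

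Since $S$ is factorially graded, the reduced fractional form is well defined, and a reduced fraction $\frac{r}{s}$ of degree zero lies in $S_{(f)}$ if and only if $s$ divides a power of $f$. In $S_{(fg)}$ the denominator $s$ divides a power of $fg$. Passing to $S^{(n)}$ via Lemma~\ref{lem:basis_shifted} if needed, and using that $f,g\in\Gen^D$ are square-free monomials in $D$-prime elements, we may take $s\mid fg$. This gives the trichotomy (i)--(iii) from the proof of Lemma~\ref{lem:9.1/7_gen}: $s\mid f$, $s\mid g$, or $s=xy$ with $x\mid f$, $y\mid g$ and $s\nmid f$, $s\nmid g$.

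For ``weak $\Rightarrow$ not surjective'', take $h$ and $x$ as in Definition~\ref{def:weak}(2). The element $\frac{x}{h}$ is a generator of degree zero, it is a minimal generator of some $S_{(f'')}$, and it lies in $S_{(fg)}$ because $h\mid fg$. It has denominator $h$ with $h\nmid f$ and $h\nmid g$. Suppose $\frac{x}{h}$ were a polynomial over $S_0$ in elements of $S_{(f)}$ and $S_{(g)}$. Multiplying through and comparing reduced forms via unique factorization into $D$-prime elements, its denominator would have to divide a product $f^ag^b$ in a way that splits: the part dividing $f$ would come from $S_{(f)}$-terms, and the part dividing $g$ from $S_{(g)}$-terms. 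Then $\frac{x}{h}$ would factor as a product, or a sum of products, of degree-zero fractions with denominators dividing $f$ and dividing $g$ respectively. This contradicts minimality of $\frac{x}{h}$ as a generator, since it would be decomposable into strictly smaller degree-zero generators. Hence $\frac{x}{h}\notin\operatorname{im}\mu_{(fg)}$.

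For the converse, assume $\mu_{(fg)}$ is not surjective. Pick a minimal generator $\frac{r}{s}$ of $S_{(fg)}$ that is not in the image. By the trichotomy it must be of type (iii), so $s\mid fg$, $s\nmid f$ and $s\nmid g$. Because $\frac{r}{s}$ is a non-trivial element of degree zero, $s$ is $D$-invertible. Setting $h=s$ and $x=r$ then exhibits $(f,g)$ as weak.

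The main obstacle is the decomposition claim used in the first direction: a type (iii) minimal generator cannot be written as an $S_0$-polynomial in type (i) and type (ii) elements. I would try to control this with the $D$-degree bookkeeping of Corollary~\ref{cor:rel_many_el_deg_zero}. Concretely, each monomial in such a polynomial has a denominator of the form $f^ag^b$. After cancellation, the $D$-prime factors $x\mid f$ and $y\mid g$ of $h$ would have to be supplied simultaneously by a single monomial. That monomial is then a product of an element of $S_{(f)}$ and an element of $S_{(g)}$, which contradicts minimality.
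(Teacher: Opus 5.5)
Your two directions are exactly the paper's; its proof is a two-line version of the same argument. However, the step you flag as the main obstacle is a genuine gap, and it cannot be closed. Minimality of $\frac{x}{h}$ as a generator of some $S_{(f'')}$ does not stop $\frac{x}{h}$ from factoring through elements of $S_{(f)}$ and $S_{(g)}$ whose denominators contain $D$-primes not dividing $h$, which then cancel in the product.

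Let $D=\mathbb{Z}^2$ and $S=\mathbb{C}[u,v,w,y,z]$ with $\deg u=\deg z=e_1$, $\deg v=\deg w=e_2$, $\deg y=e_1+e_2$, and take $f=uv$, $g=zw\in\Gen^D(S)$.
\begin{itemize}
\item \emph{The pair is weak.} Since $S_{(uw)}=\mathbb{C}[\frac{z}{u},\frac{v}{w},\frac{y}{uw}]$ is a polynomial ring, $\frac{y}{uw}$ is a generator of degree zero. Moreover $h=uw$ is $D$-invertible, $h\mid fg$, and $h$ divides no power of $f$ or of $g$.
\item \emph{The minimality contradiction does not occur.} We have $\frac{y}{uw}=\frac{z}{u}\cdot\frac{y}{zw}$ with $\frac{z}{u}\in S_{(f)}$ and $\frac{y}{zw}\in S_{(g)}$. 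This is exactly the split factorization your argument arrives at, and it contradicts nothing, because $\frac{y}{zw}\notin S_{(uw)}$.
\item \emph{$\mu_{(fg)}$ is surjective.} We have $S_{(fg)}=\mathbb{C}[(\frac{u}{z})^{\pm1},(\frac{v}{w})^{\pm1},\frac{y}{zw}]$, and each listed generator lies in $S_{(f)}$ or $S_{(g)}$.
\item \emph{Reading ``generator'' relative to $S_{(fg)}$ does not help.} The set $\{(\frac{u}{z})^{\pm1},(\frac{v}{w})^{\pm1},\frac{y}{uw}\}$ is also a minimal generating system of $S_{(fg)}$.
\end{itemize}
So ``weak $\Rightarrow$ not surjective'' fails for Definition~\ref{def:weak} as it stands, and no degree bookkeeping via Corollary~\ref{cor:rel_many_el_deg_zero} can rescue it.

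There is also a smaller slip. By your own membership criterion, $\frac{x}{h}\notin S_{(f)}$ requires $h\nmid f^m$ for all $m$, not just $h\nmid f$. For $S=\mathbb{C}[p,q,r,c]$ with $\deg p=e_1$, $\deg c=2e_1$, $\deg q=\deg r=e_2$, $f=pq$, $g=pr$, the pair is weak via $h=p^2$ and numerator $c$. Yet $\frac{c}{p^2}\in S_{(f)}$, and $\mu_{(fg)}$ is again surjective.

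The converse contains an unjustified reduction as well, namely ``we may take $s\mid fg$''. Reduced denominators of elements of $S_{(fg)}$ only divide a power of $fg$. Passing to $S^{(n)}$ as in Lemma~\ref{lem:basis_shifted} leaves the subrings $S_{(f)}\subseteq\Quot(S)$, and the reduced forms of their elements, unchanged.

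For a concrete case, take $S=\mathbb{C}[x,y,z]$ with $\deg x=e_1$, $\deg y=e_2$, $\deg z=2e_1+e_2$, and $f=xz$, $g=yz\in\Gen^D(S)$. Then $S_{(f)}=S_{(g)}=\mathbb{C}[t]$ for $t=\frac{x^2y}{z}$, and $S_{(fg)}=\mathbb{C}[t,t^{-1}]$. The monomial generator missing from the image is $t^{-1}=\frac{z}{x^2y}$, and $x^2y\nmid fg=xyz^2$. So setting $h=s$ does not satisfy the condition $h\mid fg$ of Definition~\ref{def:weak}. Once the trichotomy is phrased with ``divides a power of'', it no longer matches that definition.

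The paper's own proof asserts both of these steps (``cannot be written as a proper product'' and ``$h'\mid fg$'') without justification, so it offers no way around them. The first example shows that what is needed is a different definition of weak pair, with generators taken relative to the subalgebra generated by $S_{(f)}$ and $S_{(g)}$, rather than a more careful proof.
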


\begin{proof}
\Ra Clearly, $h$ corresponds to an $D$-invertible element in $S_{(fg)}$, i.e.\ there exists an element $x \in S$ such that $(x, h)$ is a generator of degree zero in $S_{(fg)}$. In particular, $\frac{x}{h}$ cannot be written as a proper product (of non-trivial elements of degree zero) and thus cannot have a preimage.\\

\La Let $\mu_{(fg)}$ be not surjective, i.e.\ there is some generator of degree zero $(x', h') \in S_{(fg)}$ that has no preimage. In particular, $h'|fg$, but $h'$ cannot divide neither $f$ nor $g$. 
\end{proof}

\begin{example}\label{ex:ush_sep1}
We compute the weak elements for several rings.
\begin{enumerate}[label=(\arabic*)]
    \item Let $S = \BC[x, y, z]$ and $D = \BZ^2$, where $\deg(x) = e_1$, $\deg(y) = e_2$ and $\deg(z) = e_1+e_2$. Then $f = xz$ is not $D$-invertible, as the only non-trivial element of degree zero with $f$ as denominator is $\frac{x^2y}{xz} = \frac{xy}{z}$, hence $f$ cannot occur as denominator in reduced fractional form. The same holds true for $g = yz$. But the relevant element $h = xy$ is $D$-invertible and $(z, xy) \in S_{(fg)}$ is a generator of degree zero that cannot have a preimage (with respect to $\mu_{(fg)}$). Also note that
     $x$ as well as $y$ are linear dependent variables of $S$, that are not $D$-invertible (i.e.\ there are non $D$-invertible divisors of a $D$-invertible element).
     In particular, $(xz, yz)$ is the only weak pair belonging to $S$.

    \item Let $S = \BC[x, y, z]$ be graded by $\BZ\times \BZ/2\BZ$, such that $\deg(x) = (1, 0)$, $\deg(y) = (0, 1)$ and $\deg(z) = (1, 1)$. In this case, $\Gen^D(S) = \{x, z\}$.
    In contrast to (2), monomic relevant elements have length one, so $h = xy$ cannot be split. In particular, $S$ does not contain a weak element and $\Proj^D(S)$ is separated.
\end{enumerate} 
\end{example}

We found several conditions implying the existence of a weak pair:

\begin{lemma}\label{lem:weak_cond} 
Let $S$ be factorially graded, $B \subseteq S_+$ a subset of the irrelevant ideal.
\begin{enumerate}[label=(\alph*)]
    \item Let $f, g \in \Gen^D_B(S)$ such that there is a $D$-invertible element $h$ 
with at least one non-$D$-invertible divisor (say $d_h$), such that $h \mid fg$ but $h \not\mid f$ and $h \not\mid g$. Then $(f, g)$ is weak.

    \item Let $f \in \Gen^D_B(S)$ be a product of linear dependent variables of $S$, such that $f$ is not $D$-invertible. Then $f$ is weak.

    \item Let $f\in \Gen_B^D(S)$ and $f' \in S$ be a proper homogeneous divisor of $f$, such that $S_{(f')} = S_{(f)}$ and $f = f'g$, where $g$ is a product of linear dependent variables. Then $f$ is weak.
\end{enumerate} 
\end{lemma}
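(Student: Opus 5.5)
The plan is to reduce all three parts to Definition~\ref{def:weak}(2), and then, via Proposition~\ref{prop:surj_cond}, to the non-surjectivity of $\mu_{(fg)}$. In each case I will exhibit a $D$-invertible $h$ with $h \mid fg$, $h\nmid f$, $h\nmid g$, together with some $x$ such that $(x,h)$ is a generator of degree zero in $\Quot(S)$. Equivalently, I will find a generator of degree zero of $S_{(fg)}$ that cannot be written as a product of elements coming from $S_{(f)}$ and $S_{(g)}$. Throughout I work with reduced fractional forms, which is possible because $S$ is factorially graded.

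\textbf{Part (a).} The hypotheses already provide $h$ with the divisibility conditions, so it remains to produce $x$ with $(x,h)$ a generator of degree zero. Since $h$ is $D$-invertible, there is a coprime $r$ with $\frac{r}{h}$ non-trivial of degree zero, and $\frac{r}{h} \in S_{(fg)}$ because $h \mid fg$.

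If $\frac{r}{h}$ is not already a generator, I decompose it into generators of $S_{(fg)}$ (Lemma~\ref{lem:9.1/7_gen}). Some generator factor must carry a denominator divisible by the non-$D$-invertible divisor $d_h$. That denominator cannot be $d_h$ alone, since $d_h$ is not $D$-invertible. So it is a $D$-invertible divisor $h'$ of $h$ that properly contains $d_h$.

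This is the step I expect to be the main obstacle. One must check that $h'$ still satisfies $h' \nmid f$ and $h'\nmid g$. My plan is to choose $h$ minimal among $D$-invertible divisors of $fg$ with these properties and containing $d_h$, and to argue that minimality forces $h'=h$. With $x=r$, Definition~\ref{def:weak}(2) then applies.

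\textbf{Part (b).} Since $f$ is a product of linear dependent variables, each factor of $f$ divides the numerator or denominator of some generator of degree zero. Because $f$ is not $D$-invertible, the denominators of these generators are $D$-invertible elements $h$ that are not divisors of $f$.

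Choose such a generator $\frac{x}{h}$ with $h$ sharing a variable with $f$. Using relevance of $h$'s complementary part (Corollary~\ref{cor:rel_many_el_deg_zero}), choose $g \in \Gen^D_B(S)$ containing the remaining factors of $h$, but not all of $h$. Then $h \mid fg$, $h\nmid f$ and $h\nmid g$. Moreover, $h$ has the non-$D$-invertible divisor given by the shared variable, or by $f$'s part. Part (a), or directly the definition, then gives that $(f,g)$ is weak, hence $f$ is weak.

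\textbf{Part (c).} Write $f=f'g$ with $S_{(f')}=S_{(f)}$. The factors of $g$ contribute no new degree-zero generators to $S_{(f)}$. Yet each factor of $g$ is a linear dependent variable, so it occurs in a generator $\frac{x}{h}$ of degree zero whose denominator $h$ involves factors of $g$ and does not divide $f$; otherwise it would yield a new generator of $S_{(f)}$.

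As in (b), I pick $g'' \in \Gen^D_B(S)$ supplying the missing factors of $h$ with $h\nmid g''$. Then $h \mid fg''$, and $(f,g'')$ is weak. Again, the delicate point is choosing $h$ minimal so that it divides neither factor.
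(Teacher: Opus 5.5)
Your reduction to Definition~\ref{def:weak}(2) matches the paper's, but the step you flag in (a) is the whole content of that part, and minimality does not settle it. Suppose $h$ is minimal among $D$-invertible divisors of $fg$ that contain $d_h$ and divide neither $f$ nor $g$. Then minimality only excludes a generator factor $\frac{x'}{h'}$ of $\frac{r}{h}$ with $d_h\mid h'$, $h'\neq h$, when $h'\nmid f$ and $h'\nmid g$. The case $h'\mid f$ or $h'\mid g$, which is exactly the one you were worried about, is untouched. There is a second problem: in a product of reduced fractions, a factor of one denominator can cancel against another numerator. So $h'$ is only known to divide a power of $fg$, not $h$, and even your reduction to divisors of $h$ needs an argument. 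The paper does not use a minimal choice. It concludes directly that $\frac{x}{h}$ admits no splitting into degree-zero pieces $(y,h')$ with $y\mid x$, $h'\mid h$, because $h$ is not $D$-invertible in $S_{(f)}$ nor in $S_{(g)}$ and $d_h$ is not $D$-invertible at all. To complete your version you need an argument that excludes a degree-zero factor whose denominator contains $d_h$ and divides $f$ or $g$; minimality cannot supply it.

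In (b), you infer that, because $f$ is not $D$-invertible, the denominators of these generators do not divide $f$. This does not follow. In Example~\ref{ex:ush_sep1}(1), $f=xz$ is not $D$-invertible, yet its factor $z$ is the denominator of the generator $\frac{xy}{z}$. So your construction guarantees neither $h\nmid f$ (needed even if you argue directly from the definition) nor that the shared variable is a non-$D$-invertible divisor of $h$ (needed to invoke (a)). The paper uses the hypothesis differently. It picks an irreducible factor $d_f$ of $f$ that is not $D$-invertible, writes $f=d_ff'$, takes $d_g$ with $d_fd_g$ $D$-invertible, and pairs $f$ with the explicit partner $f'd_g$. Then $h=d_fd_g$ cannot divide $f$ (otherwise $d_g=f'$ and $f=h$ would be $D$-invertible), it does not divide $f'd_g$ since $d_f\nmid f'd_g$, and $d_f$ is exactly the non-$D$-invertible divisor required in (a). Also, Corollary~\ref{cor:rel_many_el_deg_zero} only yields degree-zero fractions; it does not produce an element of $\Gen^D_B(S)$ with prescribed divisors, so your choice of $g$ is unsupported as cited. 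Your (c) differs mildly from the paper: you rerun the construction of (b), using $S_{(f')}=S_{(f)}$ to force $h\nmid f$, whereas the paper applies (b) to the non-relevant factor $g$ and passes weakness to $f$. Either way, it inherits both the open step of (a) and the existence problem of (b).
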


\begin{proof}
\begin{enumerate}[label=(\alph*)]
    \item Since $h \mid fg$ is $D$-invertible, clearly there exists an element $x \in S$ such that $(x, h)$ is a non-trivial element of degree zero (in $S_{(fg)}$). Since $h$ is neither $D$-invertible in $S_{(f)}$ nor in $S_{(g)}$ and $d_h$ is not $D$-invertible at all, $(x, h)$ has to be irreducible, i.e.\ a generator of degree zero.
    In particular, there are no $y, h' \in S$, $y \mid x$ and $h'\mid h$, such that $(y, h')$ is a non-trivial element of degree zero.

    \item Since $f$ is not $D$-invertible, it has a non $D$-invertible divisor, say $d_f$. Let $d_f \neq f' \mid f$ be an irreducible divisor (without loss of generality, we may assume $r=2$).
    Since $d_f$ is a linear dependent variable, there exists $d_g$ such that $d_f d_g$ is $D$-invertible. In particular, $(f, f'd_g)$ is weak.

    \item By (b), we know that $g$ satisfies the condition of a weak element, but $g$ is not relevant. In particular, $f$ is weak.
\end{enumerate}
\end{proof}

\begin{example}
\begin{enumerate}[label=(\arabic*)]
    \item  Let $S = \BC[x, y, z, w]$ and $D = \BZ^2$, where $\deg(x) = \deg(y) = (1, 0)$, $\deg(z) = (1, 1)$ and $\deg(w) = (0, 1)$. It holds that $\Gen^D(S) = \{xw, yw, zw, xz, yz\}$. Then the conditions from Lemma~\ref{lem:weak_cond} (a) are satisfied for $f = wz$ and $g = xz$ (or $yz$), as $h := xw$ (resp. $h = yw$) has the non-$D$-invertible divisor $w$.
    \item Let $S = \BC[x, y, z, v, w]$ where the grading is given by $x, y \mapsto e_1$, $v, w \mapsto e_2$ and $z \mapsto e_1 + e_2$, so $S_+ = (xz, xw, xv, yz, yw, yv, zw, zv)$. Note that every variable is $D$-invertible, even though we have non-trivial linear dependencies. But still, $(xz, wz)$, $(xz, vz)$, $(yz, wz)$ and $(yz, vz)$ are weak pairs.
\end{enumerate}
\end{example}


Next, we connect the notion of weak pairs to the linear dependency types in $D$. If we assume $S$ to be factorially graded, we can talk about the \emph{length} of a homogeneous element in terms of the number of its $D$-prime factors. We have to be very careful due to torsion.

\begin{definition}
Let $S$ be factorially graded, and $h, s \in S$ be homogeneous. We say that $\deg(h)$ and $\deg(s)$ are \emph{non-trivial irreducible dependent}, if $\deg(h) = \deg(s)$ and 
\begin{enumerate}[label=(\roman*)]
    \item $\length(h, s) \neq (1, 1)$ and
    \item $\nexists h', s'\in S, k,l\in \BN: h'\mid h^k, s'\mid s^l$ such that $(h', s')$ is a non-trivial element of degree zero, where $h \neq h'$ and $s' \neq s$.
\end{enumerate}
If only (ii) holds, we say that $\deg(h)$ and $\deg(s)$ are \emph{irreducible dependent}.
Now let $f \in S_d$ be relevant.  
We call a homogeneous element $a$ satifying $a \mid f$ an \emph{irreducible component of $f$}, if $a$ is part of an irreducible dependency, i.e.\ if there are homogeneous elements $b, c$ such that $\deg(ab) = \deg(c)$ or $\deg(a) = \deg(bc)$.
Likewise, we call all the irreducible homogeneous elements belonging to the linear dependency $\deg(ab) = \deg(c)$ and not dividing $f$ \emph{irreducible components not dividing $f$}. 
\end{definition}

\begin{example}
    In Example~\ref{ex:ush_sep1} we consider the equation $\deg(xy) = \deg(z)$. While in (1), there is no other equation in $D_\BR$ and thus they are non-trivial irreducible dependent, the situation in (2) is completely different: We see that for $h' = x^2$ and $s' = z^2$, the equation $\deg(xy) = \deg(z)$ becomes dependent after squaring it. In particular, $\Proj^D(S)$ is separated in (2).
\end{example}

\begin{proposition}\label{prop:weak_cond} 
Let $S$ be factorially graded, and $(h, s)$ be non-trivial irreducible dependent, where we may assume $\length(h) > 1$ without loss of generality. Then there exists a weak pair $(f, g)$ such that $h \mid fg$ but $h\nmid f$, $h \nmid g$.
\end{proposition}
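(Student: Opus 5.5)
The idea is to reduce the statement to Lemma~\ref{lem:weak_cond}(a), or, when that is not directly applicable, to the definition of a weak pair in Definition~\ref{def:weak}(2). The degree equality $\deg(h)=\deg(s)$ together with irreducibility (ii) will be used to show that $s/h$, after passing to reduced fractional form, is a generator of degree zero. Its denominator $h$ is then $D$-invertible. What remains is to split $h$ between two relevant generators.

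\emph{Step 1: $s/h$ is a generator.} Write $h$ and $s$ in reduced fractional form; by factoriality we may cancel common $D$-prime factors, and this does not affect condition (ii). Condition (ii) says there is no proper sub-pair $(h',s')$ with $h'\mid h^k$, $s'\mid s^l$ that is already of degree zero. Hence $s/h$ cannot be written as a proper product of non-trivial elements of degree zero, so it is part of a minimal $S_0$-generating system of $S_{(f')}$ for any relevant $f'$ with $h\mid f'$. Such an $f'$ exists: apply Corollary~\ref{cor:rel_many_el_deg_zero} to $h$ and a relevant element. Condition (i) guarantees that $s/h\notin S_0$ and that the dependency is not of the trivial form $\deg(a)=\deg(b)$ for two primes. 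Consequently $h$ is $D$-invertible, and $(s,h)$ is a generator of degree zero in $\Quot(S)$.

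\emph{Step 2: splitting $h$.} Since $\length(h)>1$, write $h=h_1h_2$ with both $h_i$ proper, nonunit and homogeneous. Using Corollary~\ref{cor:rel_many_el_deg_zero} (equivalently, \cite{paper1}, Lemma 1.15), complete each $h_i$ to a relevant element, and then pass to minimal generators $f,g\in\Gen^D_B(S)$ with $h_1\mid f$ and $h_2\mid g$. We must choose the complements so that $h\nmid f$ and $h\nmid g$, i.e.\ $f$ avoids the $D$-prime factors of $h_2$ and $g$ avoids those of $h_1$. This is possible if the remaining $D$-prime factors of $h$, together with the other variables, still span a finite-index subgroup; minimality of the generators in $\Gen^D$ is what should exclude the full $h$ from sitting inside one generator. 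Then $h\mid fg$, and Step 1 shows that $(s,h)$ is a generator of degree zero. Definition~\ref{def:weak}(2) therefore makes $(f,g)$ weak. Equivalently, by Proposition~\ref{prop:surj_cond}, $s/h\in S_{(fg)}$ has no preimage under $\mu_{(fg)}$, because any preimage would factor $s/h$ into degree-zero pieces with denominators dividing $f^k$ or $g^l$, contradicting (ii).

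\emph{Main obstacle.} The hard part is the choice in Step 2: ensuring that relevant minimal generators $f,g$ exist which separate the factors of $h$. A single $D$-prime of $h$ might be forced into every relevant element containing another factor of $h$. My first attempt is to argue by contradiction. Suppose every relevant generator divisible by some factor of $h$ is divisible by all of $h$. Then the degrees of the factors of $h$ would only enter $D^f$ through $\deg(h)=\deg(s)$. From this we would extract a proper sub-dependency $(h',s')$ with $h'\mid h^k$, violating (ii). If this fails in general, the fallback is to choose a non-$D$-invertible divisor $d_h$ of $h$ and invoke Lemma~\ref{lem:weak_cond}(a) directly.
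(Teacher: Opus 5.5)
\textbf{There is a genuine gap in Step 2.} Grant Step 1. Then Definition~\ref{def:weak}(2) reduces the proposition exactly to one thing: producing $f,g\in\Gen^D_B(S)$ with $h\mid fg$, $h\nmid f$ and $h\nmid g$. That is precisely what you leave open (``should exclude'', ``first attempt'', ``fallback''), and the tools you name do not supply it.
\begin{enumerate}
\item Corollary~\ref{cor:rel_many_el_deg_zero} only yields a degree-zero fraction $h_1^N g_{h_1}/f^k$. It gives no control over which $D$-primes enter the completing factor. The trivial completion $h_1F$, with $F$ relevant, may well contain the primes of $h_2$.
\item Replacing a relevant element by a minimal generator dividing it can discard $h_1$ altogether. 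For example, $yz\mid x\cdot yz$ in Example~\ref{ex:ush_sep1}(1). Then even $h\mid fg$ is lost.
\item Your contradiction sketch never explains how a proper sub-dependency $(h',s')$ would be extracted.
\item The fallback via Lemma~\ref{lem:weak_cond}(a) is circular. That lemma assumes exactly such a pair $f,g\in\Gen^D_B(S)$ with $h\mid fg$ and $h\nmid f,g$. It also needs a non-$D$-invertible divisor of $h$, which need not exist: in the example with variables $x,y,z,v,w$ following Lemma~\ref{lem:weak_cond}, every variable is $D$-invertible.
\end{enumerate}

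\textbf{The missing idea, which is the whole of the paper's proof, is to use $s$ itself as the common cofactor.} Since $\length(h)>1$, you can spread $h$ over two homogeneous elements $f',g'$ of length $d-\length(s)$ with $h\mid f'g'$, $h\nmid f'$ and $h\nmid g'$. Then set $(f,g)=(f's,g's)$.
\begin{itemize}
\item After the cancellation you already perform in Step 1, $h$ and $s$ are coprime. So factoriality gives $h\nmid f's$ and $h\nmid g's$, while $h\mid f'g'\mid fg$.
\item The length $d-\length(s)$ is chosen so that $f's$ and $g's$ are relevant generators. Here $s$, which has the same degree as $h$, takes the place of the part of $h$ omitted from each side.
\item In Example~\ref{ex:ush_sep1}(1), taking $h=xy$ and $s=z$ gives exactly the weak pair $(xz,yz)$.
\end{itemize}
With this choice, your Step 1 supplies the generator $(s,h)$ of degree zero, and Definition~\ref{def:weak}(2) finishes the argument.
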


\begin{proof}
Take homogeneous elements $f', g' \in S$ of length $d-\length(s)$ such that $h \mid f'g'$ and $h \not\mid f',g'$.
This is possible since $\length(h) > 1$, i.e.\ the divisors of $h$ don't give rise to another non-trivial element of degree zero (in $\Quot(S)$).
Then, without loss of generality, $(f, g) = (f's, g's)$ is a weak pair with the desired properties. 
\end{proof}

With this at hand, we can deduce a separation criterion in terms of linear dependencies in the factorially graded setting.

\begin{theorem}\label{thm:lin_dep_not_sep}
Let $S$ be a factorially $D$-graded integral domain. Then the following holds true:
\begin{enumerate}
    \item If there are only linear dependencies of length $1$, $\Proj^D(S)$ is separated.

    \item If there exists at least one non-trivial irreducible dependency, then $\Proj^D(S)$ is not separated.
\end{enumerate}
\end{theorem}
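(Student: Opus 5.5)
The plan is to reduce both parts to the criterion recalled at the beginning of Section~\ref{sec:sep_crit}: since $\Proj^D(S)$ is covered by the affine opens $D_+(f)$, $f \in \Gen^D(S)$, with $D_+(f) \cap D_+(g) = D_+(fg)$ (Corollary~\ref{cor:intersection_and_open_subsets_of_hom_elements}(b)), separatedness is equivalent to surjectivity of all multiplication maps $\mu_{(fg)}\colon S_{(f)} \otimes S_{(g)} \to S_{(fg)}$ for $f, g \in \Gen^D(S)$. By Proposition~\ref{prop:surj_cond}, this in turn is equivalent to the nonexistence of weak pairs $(f,g)$. So both statements become statements about weak pairs.

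For (2) this is nearly immediate. Given a non-trivial irreducible dependency $\deg(h) = \deg(s)$, condition (i) lets us assume, after possibly swapping $h$ and $s$, that $\length(h) > 1$. Proposition~\ref{prop:weak_cond} then supplies a weak pair $(f,g)$ with $h \mid fg$, $h \nmid f$ and $h \nmid g$. By Proposition~\ref{prop:surj_cond}, $\mu_{(fg)}$ is not surjective, so the diagonal $D_+(f)\cap D_+(g) \to D_+(f)\times D_+(g)$ is not a closed immersion, and $\Proj^D(S)$ is not separated. I would include Example~\ref{ex:double_origin_sep} as the model case.

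For (1) I argue by contradiction. Suppose $(f,g)$ is weak. Then there is a $D$-invertible $h$ with $h \mid fg$, $h\nmid f$, $h\nmid g$, and some $x$ such that $\frac{x}{h}$ is a generator of degree zero. Since $h$ divides $fg$ but neither factor, it has at least two $D$-prime factors, one coming from $f$ and one from $g$. The identity $\deg(x) = \deg(h)$ is a linear dependency in $D_\BR$. By hypothesis it must be of length one, i.e.\ of the form $\deg(a^k) = \deg(b^l)$ with $a, b$ $D$-prime.

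Reducing $\frac{x}{h}$ to coprime form and using minimality of the generator, we get $\frac{x}{h} = \frac{b^l}{a^k}$ up to a unit of $S_0$. Hence $h$ is a power of a single $D$-prime $a$, up to a unit. Then $a \mid f$ or $a \mid g$. Moreover, because $f, g$ are square-free-type minimal generators, every $D$-prime divisor of $fg$ occurring in $h$ already divides $f$ or $g$, so the generator $\frac{b^l}{a^k}$ already lies in $S_{(f)}$ or $S_{(g)}$, where it has a preimage. This contradicts the non-surjectivity coming from weakness.

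The main obstacle is this last step: turning ``only length-one dependencies'' rigorously into ``every generator of degree zero has a single $D$-prime (power) as denominator.'' I expect to have to decompose an arbitrary dependency among $D$-prime degrees into irreducible dependencies, in the sense of condition (ii) of the definition, and argue that each irreducible one has length $(1,1)$. Torsion requires care here: I would pass to $D_\BR$ and use powers, as Lemma~\ref{lem:basis_shifted} permits. I would also need to handle the case $a^k \nmid f$ but $a \mid f$, replacing $f$ by a power, which does not change $S_{(f)}$.
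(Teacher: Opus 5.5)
Your overall route is the paper's. You get separatedness from surjectivity of the maps $\mu_{(fg)}$, you get (2) from Proposition~\ref{prop:weak_cond} together with Proposition~\ref{prop:surj_cond}, and you get (1) from the claim that the relevant denominators are powers of a single $D$-prime. Your part (2) coincides with the paper's argument. The gap is in (1), at exactly the step you flag, and your plan for closing it does not work.

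Decomposing $\deg(x)=\deg(h)$ into length-one relations only writes $\frac{x}{h}$ (or a power of it) as a product of length-one fractions \emph{and their inverses}. Such a product can still be a minimal generator with a composite denominator, so minimality does not give $\frac{x}{h}=\frac{b^l}{a^k}$. Take $D=\BZ$ and $S=\BC[a,b,c]$ with degrees $2,3,5$.
\begin{itemize}
\item Every relation among these degrees is an integral combination of length-one relations, e.g.\ $(1,1,-1)=2(3,-2,0)-(5,0,-2)+(0,5,-3)$.
\item Yet $S_{(ab)}=\BC[\frac{a^3}{b^2},\frac{b^2}{a^3},\frac{c}{ab}]$, and $\frac{c}{ab}$ is part of a minimal generating system.
\item So under your reading of the hypothesis, $(a,b)$ is a weak pair in the sense of Definition~\ref{def:weak}.
\item But $\Proj^D(S)=\BP(2,3,5)$ is separated, and $\frac{c}{ab}=\frac{bc}{a^4}\cdot\frac{a^3}{b^2}$ lies in the image of $\mu_{(ab)}$.
\end{itemize}
Hence weak pairs can exist in separated examples that satisfy your version of the hypothesis, and the contradiction you aim for cannot be extracted.

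There are two smaller problems as well. First, $h\mid fg$, $h\nmid f$, $h\nmid g$ does not force two distinct $D$-prime factors: take $h=a^2$, $f=ab$, $g=ac$. That claim also clashes with your very next sentence. Second, exhibiting a preimage of one particular generator does not contradict non-surjectivity of $\mu_{(fg)}$.

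The paper's proof uses a stricter reading of the hypothesis; this is what its sentence ``every linear dependency is of type $\deg(x_i)=\lambda\deg(x_j)$'' amounts to. Namely, every relation $\deg(x)=\deg(h)$ with $x,h$ coprime homogeneous has $x$ and $h$ each equal to a unit times a power of a single $D$-prime. Under that reading you need no weak pairs, no minimality and no decomposition:
\begin{itemize}
\item Every element of $S_{(fg)}$ is a reduced fraction $\frac{x}{h}$ with $h\mid (fg)^m$ and $\deg(x)=\deg(h)$.
\item So $h$ is a unit times $a^k$ for a $D$-prime $a$ dividing, say, $f$.
\item After absorbing the unit into $x$, $\frac{x}{h}=\frac{x(f/a)^k}{f^k}\in S_{(f)}$.
\end{itemize}
Hence $\mu_{(fg)}$ is surjective directly. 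The $(2,3,5)$ example does not satisfy this stronger hypothesis, because of the coprime relation $\deg(ab)=\deg(c)$.
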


\begin{proof}
    \begin{enumerate}
        \item We already know that $\mu_{(fg)}$ is not surjective if and only if $(f, g)$ is weak. Since every linear dependency is of type $\deg(x_i) =  \lambda \deg(x_j)$ for linear dependent variables $x, y \in S$, there is no $h\mid fg$ such that $h$ (resp. the divisors of $h$) is not invertible in $S_{(f)}$ or $S_{(g)}$.

        \item Follows from Proposition~\ref{prop:weak_cond}.
    \end{enumerate}
\end{proof}

If we view $S$ as conical ring, we get separated models by choosing \emph{nice} ideals $B$:

\begin{corollary}[Separability Criteria for conical rings]\label{cor:sep_crit}\makebox{}{}\\
For a factorially graded conical ring $(S, B)$, the scheme $\Proj^D_B(S)$ is separated if and only if $B$ does not contain a weak element. 
\end{corollary}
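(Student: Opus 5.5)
The plan is to reduce separatedness of $\Proj^D_B(S)$ to the surjectivity of the multiplication maps, and then to apply Proposition~\ref{prop:surj_cond} pair by pair.

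By Theorem~\ref{thm:Proj_D=Proj^D} and Corollary~\ref{cor:intersection_and_open_subsets_of_hom_elements}(b), $\Proj^D_B(S)$ is covered by the affine opens $\Spec(S_{(f)})$ for relevant $f \in B$. Any two of them intersect in the affine open $\Spec(S_{(fg)})$. If $S$ is noetherian, Corollary~\ref{cor:intersection_and_open_subsets_of_hom_elements}(c) lets us restrict to $f \in \Gen^D_B(S)$. The standard criterion (\cite{Bosch}, §9.1, or \cite{BS}, proof of Proposition 3.3) then says the following: a scheme with an affine cover whose pairwise intersections are affine is separated if and only if, for every pair, the map $S_{(f)} \otimes_{S_0} S_{(g)} \to S_{(fg)}$ is surjective. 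This is because the diagonal restricted to $\Spec(S_{(f)}) \times \Spec(S_{(g)})$ is the morphism $\Spec(S_{(fg)}) \to \Spec(S_{(f)} \otimes S_{(g)})$, and it is a closed immersion exactly when $\mu_{(fg)}$ is surjective. So $\Proj^D_B(S)$ is separated if and only if $\mu_{(fg)}$ is surjective for all $f, g \in \Gen^D_B(S)$.

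Next, Proposition~\ref{prop:surj_cond} translates this into: $\mu_{(fg)}$ fails to be surjective exactly when $(f,g)$ is a weak pair. By Definition~\ref{def:weak}(3), $B$ contains a weak element if and only if some $f \in \Gen^D_B$ lies in a weak pair $(f,g)$ with $g \in \Gen^D_B$. Chaining the two equivalences gives both directions. If $B$ contains no weak element, every $\mu_{(fg)}$ is surjective and the scheme is separated. If $f \in B$ is weak with partner $g$, then $\mu_{(fg)}$ is not surjective, so the diagonal fails to be a closed immersion over $\Spec(S_{(f)}) \times \Spec(S_{(g)})$ and $\Proj^D_B(S)$ is not separated.

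I expect the main obstacle to be a definitional mismatch. Weakness of $f$ lets its partner $g$ be any relevant element of $\Gen^D_B$, and the pairs that matter are exactly those with both members in $B$, so I must check that the notation $\Gen^D_B$ in Definition~\ref{def:weak} is read relative to $B$. A second point to verify is that reducing from all relevant elements of $B$ to generators loses nothing. For this I would use that $D_+(f)$ is a union of $D_+(f')$ with $f'$ a generator dividing $f$, together with the fact that separatedness can be tested on any affine cover with affine pairwise intersections.
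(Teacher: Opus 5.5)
Your proposal is correct and follows the paper's route: separatedness is reduced to surjectivity of all the multiplication maps $\mu_{(fg)}$, and Proposition~\ref{prop:surj_cond} is then applied pair by pair. The paper's proof is only this one-line reduction. Your explicit affine-cover criterion for the diagonal and your reduction to generators in $\Gen^D_B$ spell out steps the paper leaves implicit.
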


\begin{proof}
We only have to show that $\mu_{(fg)}$ is surjective. This immediately follows from Proposition~\ref{prop:surj_cond}.
\end{proof}

\begin{example}
    We want to consider Example~\ref{ex:ush_sep1} (1) again. We already know that $(xz, yz)$ is the only weak pair of $S$. Hence for $B_1 = (xz, xy)$ and $B_2 = (yz, xy)$ it holds
    \begin{align*}
        \Proj^D_{B_1}(S) \equ \BP_\BC^1 \equ \Proj^D_{B_2}(S).
    \end{align*}
    This coincides with the decomposition $D_+(x) = D_+(xz) \cup D_+(xy) \cong \BP^1_\BC \cong D_+(yz) \cup D_+(xy) = D_+(y)$ from Corollary~\ref{cor:intersection_and_open_subsets_of_hom_elements} (c).
\end{example}

\section{Sheaves on Multigraded Spectra}\label{subsec:sheaves}

Sheaves on multigraded spectra have already been studied by \cite{May} in chapter 5. Hence, we will not give too many details here.

For a $D$-graded $S$-module $M$ and any $d \in D$, we define the graded $S$-module $M(d)$ by $M(d)_e \equ M_{e+d}$ for $e \in D$.
For $f \in S_d$, $d\neq 0$, $M_f$ may be viewed as a graded $S_f$-module, and its homogeneous part of degree zero $M_{(f)}$ is an $S_{(f)}$-module, called the \emph{homogeneous localization of $M$ by $f$}. 

By \cite{May}, Fact 5.1 and Fact 5.2, we get the following generalization of \cite{Bosch}, Proposition 9.2/1. Note that in contrary to \cite{May}, our notation for $D_+(f)$ denotes the set of $D$-prime ideals not containing $f$, which coincides with $\Spec(S_{(f)})$ by Theorem~\ref{thm:Proj_D=Proj^D}.
For the rest of this paper, we denote $X = \Proj^D(S)$.

\begin{proposition}\label{prop:glue_M_f_qc}
Let $S$ be an integral factorially $D$-graded domain, $X = \Proj^D(S)$. Then the $\CO_{D_+(f)}$-modules $\widetilde{M}_{(f)}$, where $f$ varies over all relevant elements of $S$, can be glued to a unique quasi-coherent $\CO_X$-module $\widetilde{M}$, so that it holds
\begin{align*}
    \Gamma(D_+(f), \widetilde{M}) \equ M_{(f)}.
\end{align*}
Furthermore, the assignment $M \mapsto \widetilde{M}$ is functorial, mapping from the (abelian) category of $D$-graded $S$-modules to the (abelian) category of quasi-coherent sheaves on $\Proj^D(S)$. 
In particular, the functor is exact and factors through $\textrm{Mod}^D(S)/\textrm{Mod}^D(S)_{\textrm{neg}} \to \textrm{QCoh}(\Proj^D(S)$, where $M \in \textrm{Mod}^D(S)_{\textrm{neg}}$ if and only if $\widetilde{M}=0$.
\end{proposition}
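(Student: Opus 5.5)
We follow the proof of \cite{Bosch}, Proposition 9.2/1, replacing the $\BN$-graded localizations by the homogeneous localizations $M_{(f)}$ at relevant $f$, and using Theorem~\ref{thm:Proj_D=Proj^D} to identify $D_+(f)$ with $\Spec(S_{(f)})$. For each relevant $f$ we take the quasi-coherent $\CO_{D_+(f)}$-module $\widetilde{M}_{(f)}$ associated to the $S_{(f)}$-module $M_{(f)}$ via $\psi_f$. The aim is to produce a gluing datum on the covering $\{D_+(f)\}_{f\in\Rel^D(S)}$. By Corollary~\ref{cor:intersection_and_open_subsets_of_hom_elements}~(b) the overlaps satisfy $D_+(f)\cap D_+(g)=D_+(fg)$, with $fg$ again relevant. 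So it suffices to construct canonical isomorphisms
\begin{align*}
  \widetilde{M}_{(f)}\big|_{D_+(fg)} \stackrel{\sim}{\to} \widetilde{M}_{(fg)}
\end{align*}
and to check the cocycle condition on triple overlaps $D_+(fgk)$.

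The key step is the module analogue of Lemma~\ref{lem:9.1/7_gen}. We must show that the canonical map $M_{(f)}\otimes_{S_{(f)}} S_{(fg)}\to M_{(fg)}$ is an isomorphism, that is, $M_{(fg)}\cong (M_{(f)})_\CG$ for the multiplicative set $\CG$ of Lemma~\ref{lem:9.1/7_gen}. Here I would invoke \cite{May}, Fact 5.1 and Fact 5.2, which state exactly this existential compatibility for general $D$-gradings. The underlying argument goes as follows. Take $m/(fg)^n\in M_{(fg)}$. By Corollary~\ref{cor:rel_many_el_deg_zero} applied to the relevant element $f$, there are $h$, $N$ and $k$ such that $g^Nh/f^k$ has degree zero. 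Then multiplying numerator and denominator rewrites $m/(fg)^n$ as an element of $M_{(f)}$ divided by a power of a degree-zero element that becomes invertible in $S_{(fg)}$. Injectivity is checked similarly by clearing denominators. Together with Lemma~\ref{lem:9.1/14_gen}, this identifies $\widetilde{M}_{(f)}|_{D_+(fg)}$ with $\widetilde{M_{(f)}\otimes S_{(fg)}}\cong \widetilde{M}_{(fg)}$. Because all the maps are the canonical localization maps $M_{(f)}\to M_{(fgk)}$, the cocycle condition holds automatically. Gluing then yields a quasi-coherent $\widetilde{M}$ with $\Gamma(D_+(f),\widetilde{M})=M_{(f)}$, and it is unique up to unique isomorphism.

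Functoriality is immediate, since a graded homomorphism $M\to N$ induces compatible maps $M_{(f)}\to N_{(f)}$. Exactness holds because localization is exact and taking the degree-zero part of a $D$-graded module is exact. The functor factors through the quotient by $\textrm{Mod}^D(S)_{\textrm{neg}}$ essentially by definition: kernels of $M\mapsto\widetilde{M}$ form a Serre subcategory because the functor is exact, so the universal property of the Serre quotient applies. The main obstacle I expect is the isomorphism $M_{(f)}\otimes_{S_{(f)}}S_{(fg)}\cong M_{(fg)}$, since in the $D$-graded case there is no canonical element $g^{\deg f}/f^{\deg g}$. I would handle this by relying on \cite{May} rather than making $\CG$ explicit.
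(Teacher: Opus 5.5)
Your proposal follows the same route as the paper, which gives no separate proof and instead obtains the statement from \cite{May}, Fact 5.1 and Fact 5.2, as the $D$-graded analogue of \cite{Bosch}, Proposition 9.2/1, using Theorem~\ref{thm:Proj_D=Proj^D} to identify $D_+(f)$ with $\Spec(S_{(f)})$. One correction to your sketched key step: Corollary~\ref{cor:rel_many_el_deg_zero} does not by itself make $g^Nh/f^k$ invertible in $S_{(fg)}$, so you should choose $h/f^k$ to be a homogeneous unit of $S_f$ of degree $-N\deg(g)$ (which exists because $S_f$ is periodic); then $S_{(fg)}=(S_{(f)})_{g^Nh/f^k}$ and $M_{(fg)}\cong M_{(f)}\otimes_{S_{(f)}}S_{(fg)}$ follow as you describe, with this single element replacing $\CG$.
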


The next objective would be to check if $\stackrel{\resizebox{11mm}{1mm}{$\sim$}}{M \otimes_S N} \cong \widetilde{M} \otimes_{\CO_X} \widetilde{N}$, like in \cite{Bosch}, Proposition 9.1/2. However, there is a condition that $S_+$ is generated by $S_1$, which will not be the case in general, as we have seen in the examples. Hence, it is no surprise that we only get a canonical map in general:

\begin{proposition}\label{prop:module_sheaf_tensor}
    Let $M, N$ be $D$-graded $S$-modules. Then there is a canonical map of $\CO_X$-modules
    \begin{align*}
       \widetilde{M} \otimes_{\CO_X} \widetilde{N} \ \to \  \stackrel{\resizebox{11mm}{1mm}{$\sim$}}{M \otimes_S N},
    \end{align*}
    inducing a canonical map
    \begin{align*}
        M_{(f)} \otimes_{S_{(f)}} N_{(f)} \ \to \ (M\otimes_S N)_{(f)}
    \end{align*}
    for relevant $f \in S$.
\end{proposition}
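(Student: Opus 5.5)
The plan is to build the map locally on the basic affine opens $D_+(f)$, $f$ relevant, and then glue, exactly as in \cite{Bosch}, Proposition 9.2/2, but without the surjectivity statement that needs $S_+$ generated in degree one.

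\emph{Local maps.} Fix a relevant $f \in S_d$. The homogeneous localization $M_{(f)}$ consists of fractions $m/f^k$ with $m \in M_{kd}$, and similarly for $N$. Our first choice is the assignment
\begin{align*}
    \alpha_f\colon M_{(f)} \times N_{(f)} \to (M\otimes_S N)_{(f)},\quad \left(\frac{m}{f^k}, \frac{n}{f^l}\right) \mapsto \frac{m\otimes n}{f^{k+l}}.
\end{align*}
This is well defined because $m\otimes n$ is homogeneous of degree $(k+l)d$ in the $D$-graded module $M\otimes_S N$. We will check independence of representatives by the usual argument in $M_f$: if $m/f^k = m'/f^{k'}$, then $f^t(f^{k'}m - f^k m') = 0$ for some $t$, and this relation survives tensoring with $n$.

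The map $\alpha_f$ is $S_{(f)}$-bilinear. Indeed, for $s/f^j \in S_{(f)}$, moving $s$ across the tensor sign is allowed since $m\otimes sn = sm\otimes n$ in $M\otimes_S N$. Hence $\alpha_f$ factors through a map
\begin{align*}
    \lambda_f \colon M_{(f)}\otimes_{S_{(f)}} N_{(f)} \to (M\otimes_S N)_{(f)}.
\end{align*}
By Proposition~\ref{prop:glue_M_f_qc} and Theorem~\ref{thm:Proj_D=Proj^D}, we have $D_+(f) = \Spec(S_{(f)})$, and the restrictions of $\widetilde{M}$, $\widetilde{N}$ and $\widetilde{M\otimes_S N}$ to $D_+(f)$ are the quasi-coherent sheaves associated to $M_{(f)}$, $N_{(f)}$ and $(M\otimes_S N)_{(f)}$. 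Since the tensor product of associated sheaves on an affine scheme is associated to the tensor product of modules, $\lambda_f$ induces a morphism of $\CO_{D_+(f)}$-modules
\begin{align*}
    \widetilde{M}|_{D_+(f)}\otimes \widetilde{N}|_{D_+(f)} \to \widetilde{M\otimes_S N}|_{D_+(f)}.
\end{align*}

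\emph{Gluing.} For relevant $f, g$, Corollary~\ref{cor:intersection_and_open_subsets_of_hom_elements}(b) gives $D_+(f)\cap D_+(g) = D_+(fg)$, and the restriction from $D_+(f)$ to $D_+(fg)$ is induced by the canonical maps $M_{(f)}\to M_{(fg)}$, $m/f^k\mapsto mg^k/(fg)^k$, as in Lemma~\ref{lem:9.1/14_gen}. We will then verify that the square
\begin{align*}
    \xymatrix{
      M_{(f)}\otimes N_{(f)} \ar[r]^{\lambda_f} \ar[d] & (M\otimes_S N)_{(f)} \ar[d] \\
      M_{(fg)}\otimes N_{(fg)} \ar[r]^{\lambda_{fg}} & (M\otimes_S N)_{(fg)}
    }
\end{align*}
commutes. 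On a generator, both paths send $m/f^k\otimes n/f^l$ to $(mg^k\otimes ng^l)/(fg)^{k+l}$, so this check is elementary. Consequently, the local morphisms agree on overlaps and glue to a global morphism of $\CO_X$-modules, because the $D_+(f)$ form an open cover (Proposition~\ref{prop:hom_top}). Canonicity and functoriality in $M$ and $N$ follow because every step uses only the universal property of the tensor product.

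\emph{Main obstacle.} The one subtle point is that on the overlap the restriction maps must really be the localization maps $S_{(f)}\to S_{(fg)}$. In the $D$-graded setting this is the existential isomorphism of Lemma~\ref{lem:9.1/7_gen} rather than a localization at a single element $g^{\deg f}/f^{\deg g}$. The first thing to try is to avoid that description entirely: we will work directly with the natural maps $M_{(f)}\to M_{(fg)}$ coming from $M_f\to M_{fg}$, which is how the gluing in Proposition~\ref{prop:glue_M_f_qc} is defined (\cite{May}, Facts 5.1 and 5.2). With those maps, commutativity is the formal computation above.
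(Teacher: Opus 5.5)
Your proposal is correct and follows essentially the paper's route: the paper gives no argument of its own and simply cites \cite{May}, Proposition 5.3, and your construction of the local maps $\lambda_f$ via $\frac{m}{f^k}\otimes\frac{n}{f^l}\mapsto\frac{m\otimes n}{f^{k+l}}$, followed by the compatibility check with the localization maps to $D_+(fg)$ and gluing, is the standard local-construction-and-gluing argument behind that reference. As a minor remark, you do not need Theorem~\ref{thm:Proj_D=Proj^D} (and hence factoriality) at all: the overlap $\Spec(S_{(fg)})\subseteq\Spec(S_{(f)})$ is the principal open of $g^Nu$ for any homogeneous unit $u\in S_f$ of degree $-N\deg(g)$, so, contrary to your last paragraph, it is a localization at a single (albeit non-canonical) element, and $(M_{(f)})_{g^Nu}=M_{(fg)}$ is all the gluing uses.
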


\begin{proof}
    \cite{May}, Proposition 5.3.
\end{proof}

We are mainly interested in generalizing twisting sheaves, as we want to construct ample families with them.
The following definition is due to \cite{May}.

\begin{definition}\label{def:twisting_sheaves} Let $d \in D$, $X = \Proj^D(S)$.
\begin{enumerate}[label=(\arabic*)]
    \item The quasi-coherent sheaf $\stackrel{\resizebox{11mm}{1mm}{$\sim$}}{S(d)}$ is called the \emph{$d$-th twist of the structure sheaf} of $\Proj^D(S)$ and is denoted by $\CO_X(d)$.
    \item For arbitrary $\CO_X$-modules $\CM$ we set
    \begin{align*}
        \CM(d) \deq \CO_X(d) \otimes_{\CO_X} \CM .
    \end{align*}
\end{enumerate}    
\end{definition}

In the noetherian case, \cite{May} showed that $\CO_X(d)$ is coherent:

\begin{lemma}\label{lem:O_X_coherent}
    Let $S$ be noetherian.
    \begin{enumerate}[label=(\alph*)]
        \item For all $d \in D$, $\CO_X(d)$ is coherent.
        \item If $M$ is a finitely generated $D$-graded $S$-module, then $\widetilde{M}$ is coherent.
    \end{enumerate}
\end{lemma}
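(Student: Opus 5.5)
Coherence is local, so I will check both parts on an affine open cover of $X = \Proj^D(S)$. By Definition~\ref{def:multihom_spec} and Theorem~\ref{thm:Proj_D=Proj^D}, the schemes $\Spec(S_{(f)})$ for relevant $f$ cover $X$. By Proposition~\ref{prop:glue_M_f_qc}, $\widetilde{M}$ restricted to $D_+(f) \cong \Spec(S_{(f)})$ is the sheaf associated to the $S_{(f)}$-module $M_{(f)}$. Since $S$ is noetherian, $S_+$ is finitely generated, so finitely many relevant $f_1, \ldots, f_n$ suffice to cover $X$. It therefore suffices to prove two things for each relevant $f$: the ring $S_{(f)}$ is noetherian, and $M_{(f)}$ is a finitely generated $S_{(f)}$-module whenever $M$ is finitely generated and graded. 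Part (a) is the special case $M = S(d)$ of part (b), because $S(d)$ is generated by the single element $1$, placed in degree $-d$. I will therefore prove (b) and deduce (a).

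For the ring, I will use that $S_f$ is a noetherian $D$-graded ring and that $S_{(f)} = (S_f)_0$ is its degree-zero component. The key input is that $S_f$ is periodic, so $D^f \subseteq D$ has finite index. I will choose homogeneous units $u_1, \ldots, u_m \in S_f$ whose degrees generate $D^f$. Multiplication by these units then identifies $(S_f)_e$ with $(S_f)_0$ for every $e \in D^f$. Consequently the Veronese-type subring $\bigoplus_{e\in D^f}(S_f)_e$ is isomorphic to $S_{(f)}[D^f] = S_{(f)}[u_1^{\pm1}, \ldots, u_m^{\pm1}]$, a Laurent-type group algebra over $S_{(f)}$.

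To show $S_{(f)}$ is noetherian, I will use the Reynolds-type projection onto degree $0$. This projection is $S_{(f)}$-linear and sends an ideal $I \subseteq S_{(f)}$ to itself, so $IS_f \cap S_{(f)} = I$. Since $S_f$ is noetherian, every ascending chain of ideals $I_1 \subseteq I_2 \subseteq \cdots$ in $S_{(f)}$ induces the chain $I_kS_f$ in $S_f$, which stabilizes. Contracting back gives stabilization of the $I_k$, so $S_{(f)}$ is noetherian.

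For the module, $M_f$ is a finitely generated graded $S_f$-module. I will decompose it along the finitely many cosets $c$ of $D/D^f$. Each summand $M_f^{(c)} = \bigoplus_{e\in c}(M_f)_e$ is a finitely generated module over the subring $\bigoplus_{e\in D^f}(S_f)_e$, because $S_f$ is finite over this subring: it is generated by finitely many homogeneous elements, one for each coset, together with the integral relations coming from $N\deg(a) \in D^f$ as in Corollary~\ref{cor:rel_many_el_deg_zero}. Taking $c = 0$, shifting by the units $u_i$ reduces degree-$0$ pieces to the form $M_f^{(0)} \cong (M_f)_0 \otimes_{S_{(f)}} S_{(f)}[D^f]$. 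Projecting to degree $0$ then shows that $M_{(f)}$ is generated by the degree-$0$ components of the finitely many generators, after multiplying them by suitable unit monomials.

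I expect the main obstacle to be this finiteness of $S_f$ over its $D^f$-Veronese subring, and hence finite generation of $M_{(f)}$, because torsion in $D$ requires care. As a fallback I will cite the corresponding statement in \cite{May}, which is the source of this lemma.
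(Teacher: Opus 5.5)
Your overall reduction is sound. Coherence is local, and $X$ is covered by the $\Spec(S_{(f)})$. This follows from Definition~\ref{def:multihom_spec} alone; do not invoke Theorem~\ref{thm:Proj_D=Proj^D}, which needs a factorial grading that this lemma does not assume. Part (a) is part (b) for $M=S(d)$. Your degree-zero contraction $IS_f\cap S_{(f)}=I$ correctly shows that $S_{(f)}$ is noetherian. The paper itself gives no argument and simply cites \cite{May}, Lemma 5.6.

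The gap is the finite generation of $M_{(f)}$ over $S_{(f)}$. You claim that $S_f$ is finite over $V=\bigoplus_{e\in D^f}(S_f)_e$, but your justification does not establish this, for two reasons.

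\emph{One element per coset is not enough.} A single homogeneous element per coset of $D^f$ does not in general generate that coset component over $V$. Take $S=\BC[x,y]$ graded by $\BZ/2\BZ$ with $\deg x=\deg y=1$, and the relevant element $f=1$. Then $D^f=0$ and $V=S_0$, and the odd part $xS_0+yS_0$ needs two generators.

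\emph{Integrality is not enough.} Integrality of homogeneous elements over $V$ gives finiteness only once you know that $S_f$ is a finitely generated $V$-algebra. You have not argued this. It is true for noetherian graded rings, but it is a nontrivial theorem.

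\emph{$V$ is not a group algebra.} In general $V$ is only a crossed product, not the group algebra $S_{(f)}[D^f]$. For $S=\BC[x]$, $D=\BZ/2\BZ$ and $f=x$ one has $V=\BC[x^{\pm1}]$, a domain, while $S_{(f)}[\BZ/2\BZ]$ has zero divisors.

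None of this machinery is needed, because the degree-zero projection you used for the ring also works for modules. Let $R$ be a noetherian graded ring and $M$ a finitely generated graded $R$-module. The submodule $RM_0\subseteq M$ is finitely generated, hence generated by finitely many $m_1,\dots,m_t\in M_0$. For $z\in M_0$ write $z=\sum_i r_im_i$ and compare degree-zero components to get $z=\sum_i (r_i)_0m_i$. Hence $M_0=\sum_i R_0m_i$. Applying this to $R=S_f$ and $M=M_f$ gives finite generation of $M_{(f)}$ over $S_{(f)}$ for every homogeneous $f$. In particular, periodicity of $S_f$ plays no role in coherence.
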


\begin{proof}
    \cite{May}, Lemma 5.6.
\end{proof}

For $d, e \in D$, it holds $S(d) \otimes S(e) = S(d+e)$. This induces a canonical map
\begin{align}\label{eq:O(d)_O(e)_to_O(d+e)}
 \phi_{d, e}\colon    \CO_X(d) \otimes_{\CO_X} \CO_X(e)\ \to\ \CO_X(d+e) 
\end{align}
by Proposition~\ref{prop:glue_M_f_qc}.

Note that there are canonical maps
\begin{align*}
   \phi_S\colon S \ \to \ \bigoplus_{d\in D} \Gamma(X, \CO_X(d))
\end{align*}
and
\begin{align*}
    M \ \to \ \bigoplus_{d\in D} \Gamma(X, \stackrel{\resizebox{11mm}{1mm}{$\sim$}}{M(d))}
\end{align*}
by \cite{May}, Fact 5.7, giving $\bigoplus_{d \in D} \CM(d)$ the structure of an $D$-graded sheaf of modules on  $\bigoplus_{d\in D}\CO_X(d)$ (for any quasi-coherent $\CO_X$-module $\CM$), and $\bigoplus_{d\in D} \Gamma(X, \CM(d))$ the structure of an $D$-graded $\bigoplus_{d\in D} \Gamma(X, \CO_X(d))$-module.
Thus we get a functor
\begin{align*}
    \Gamma_\bullet\colon \mathrm{Mod}(\CO_X) \to \mathrm{Mod}^D(S).
\end{align*}

\begin{lemma}\label{lem:functor_OX_mod_to_mod}
    For each quasi-coherent $\CO_X$-module $\CM$, there exists a functorial morphism 
    \begin{align*}
        \stackrel{\resizebox{11mm}{1mm}{$\sim$}}{\Gamma_\bullet(\CM)} \to \CM.
    \end{align*}
    in $\mathrm{Mod}(\CO_X)$.
\end{lemma}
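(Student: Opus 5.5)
The morphism will be built locally on the affine cover $\{D_+(f)\}_{f \in \Rel^D(S)}$ and then glued, following \cite{Bosch}, Proposition 9.2/5 (resp.\ \cite{EGA2}, (2.7.3)). Put $\Gamma_\bullet(\CM) = \bigoplus_{d\in D}\Gamma(X,\CM(d))$. By Proposition~\ref{prop:glue_M_f_qc} we have $\Gamma(D_+(f), \widetilde{\Gamma_\bullet(\CM)}) = \Gamma_\bullet(\CM)_{(f)}$. Since $\CM$ is quasi-coherent, the restriction $\CM|_{D_+(f)}$ corresponds to the $S_{(f)}$-module $\Gamma(D_+(f),\CM)$. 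Hence it suffices to give $S_{(f)}$-linear maps
\begin{align*}
   \beta_f\colon \Gamma_\bullet(\CM)_{(f)} \to \Gamma(D_+(f),\CM)
\end{align*}
that are compatible with the restriction maps for $D_+(fg) \subseteq D_+(f)$, and to check functoriality in $\CM$.

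\textbf{Local definition.} An element of $\Gamma_\bullet(\CM)_{(f)}$ is a fraction $m/f^n$ with $m \in \Gamma(X,\CM(n\deg f))$. We restrict $m$ to $D_+(f)$ and use the section $f^{-n}$ of $\CO_X(-n\deg f)$ over $D_+(f)$, i.e.\ the element $1/f^n \in S(-n\deg f)_{(f)}$. The map $\phi_{-n\deg f,\,n\deg f}$ from (\ref{eq:O(d)_O(e)_to_O(d+e)}) gives
\begin{align*}
   \CO_X(-n\deg f)\otimes\CM(n\deg f) = \CO_X(-n\deg f)\otimes\CO_X(n\deg f)\otimes \CM \to \CO_X(0)\otimes\CM = \CM,
\end{align*}
and we set $\beta_f(m/f^n) := \phi(f^{-n}\otimes m|_{D_+(f)})$.

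\textbf{Well-definedness.} Independence of the representative is the usual computation: if $m/f^n = m'/f^{n'}$, then $f^{k}(f^{n'}m - f^n m') = 0$ for some $k$. Multiplication by $f$ is compatible with $\phi$ by associativity of the maps $\phi_{d,e}$, and $f$ is invertible on $D_+(f)$. Additivity and $S_{(f)}$-linearity follow from the bilinearity of $\phi$. Compatibility with restriction to $D_+(fg)$ is immediate from the formula, using the canonical diagram of Lemma~\ref{lem:9.1/14_gen} (or directly Corollary~\ref{cor:intersection_and_open_subsets_of_hom_elements}(b), $D_+(f)\cap D_+(g)=D_+(fg)$). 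The local maps therefore glue to a morphism of $\CO_X$-modules. Functoriality holds because a morphism $u\colon\CM\to\CN$ induces $u(d)$ compatible with $\phi$, so each $\beta_f$ is natural.

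\textbf{Main obstacle.} The delicate point is that, unlike the $\BN$-graded case, $\CO_X(d)$ need not be invertible, and $\phi_{d,e}$ need not be an isomorphism (cf.\ Proposition~\ref{prop:module_sheaf_tensor}). We must therefore make sure that $f^{-n}$ really is a section of $\CO_X(-n\deg f)$ on $D_+(f)$, and that the composite $\CO_X(-e)\otimes\CO_X(e)\to\CO_X$ is the plain multiplication. Only a map, not an isomorphism, is needed for this. On $D_+(f)$ the composite is the map $S(-e)_{(f)}\otimes S(e)_{(f)}\to S_{(f)}$ of fractions, which exists by Proposition~\ref{prop:module_sheaf_tensor}. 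For associativity we plan to check the identity $(f^{-n}f^{-k})\cdot(f^{k}m)=f^{-n}m$ locally on this cover, where it reduces to computations in $S_f$.
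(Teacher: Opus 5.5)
Your construction is correct, but it is not the paper's route. The paper gives no argument of its own and simply invokes \cite{May}, Fact 5.7(i). You instead carry out the EGA/Bosch construction by hand on the charts: $m/f^n$ is sent to $f^{-n}\cdot m|_{D_+(f)}$, where $f^{-n}\in S(-n\deg f)_{(f)}=\Gamma(D_+(f),\CO_X(-n\deg f))$ acts through $\phi_{-n\deg f,\,n\deg f}\otimes\mathrm{id}_{\CM}$. You then reduce well-definedness, $S_{(f)}$-linearity, compatibility on $D_+(fg)=D_+(f)\cap D_+(g)$, and naturality in $\CM$ to associativity and commutativity of the $\phi_{d,e}$. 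These hold because on each $D_+(f)$ the $\phi_{d,e}$ are just multiplication of fractions in $S_f$.

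Your version buys transparency. It shows explicitly that only the existence of the maps $\phi_{d,e}$ is used, not invertibility of $\CO_X(d)$ or bijectivity of $\phi_{d,e}$. It also gives an explicit local formula, which is what one would need to decide when the map is an isomorphism. The citation buys brevity and inherits the generality of the framework of \cite{May}.

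A few points you should make explicit or tidy up:
\begin{enumerate}
\item You implicitly use that $\phi_S(s)|_{D_+(f)}=s/1\in S(d)_{(f)}$ for $s\in S_d$. This is the natural definition, but the paper phrases $\sigma_{s,d}$ differently, so say it.
\item Checking compatibility on $D_+(fg)$ only on the image of $\Gamma_\bullet(\CM)_{(f)}$ suffices because $N_{(fg)}\cong N_{(f)}\otimes_{S_{(f)}}S_{(fg)}$ for $N=\Gamma_\bullet(\CM)$, and $\beta_{fg}$ is $S_{(fg)}$-linear. That isomorphism comes from the quasi-coherence in Proposition~\ref{prop:glue_M_f_qc}, which the paper states under the integral, factorially graded hypothesis.
\item Quasi-coherence of $\CM$ is not actually needed. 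On an affine scheme one has $\mathrm{Hom}_{\CO}(\widetilde{P},\CF)=\mathrm{Hom}_A(P,\Gamma(\CF))$ for every $\CO$-module $\CF$, so your argument works for arbitrary $\CO_X$-modules.
\item The appeal to Lemma~\ref{lem:9.1/14_gen} is superfluous, and that lemma is only stated for elements of $\Gen^D_B(S)$; Corollary~\ref{cor:intersection_and_open_subsets_of_hom_elements}(b) is what you actually use.
\end{enumerate}
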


\begin{proof}
    Follows from \cite{May}, Fact 5.7(i).
\end{proof}

From this point, we present our own results.

For an element $s = \sum_{d\in D}s_d \in S$, we define $\sigma_{s, d} \in \Gamma(X, \CO_X(d))$ to be the section obtained from gluing together all sections $\frac{s_d^l h_s}{f^m}$, such that $(s_d, f)$ admits an element of degree $d$ (in $S_{(f)}$), where $f$ varies over $\Gen^D(S)$. Hence, $\varphi_S(s) = \sum_{d\in D}\sigma_{s,d }$. In particular, we have an induced morphism of rings $S_d\to \Gamma(X, \CO_X(d))$, given by $s_d \mapsto \sigma_{s, d}$.

\begin{lemma}\label{lem:section_S_d_nice}
Let $S$ be a factorially $D$-graded noetherian integral domain, and let $d \in \supp(D)$.
    Then the induced map
    \begin{align*}
      \phi_{S, d}\colon  S_d \to \Gamma(X, \CO_X(d))
    \end{align*}
    is an isomorphism of rings.
\end{lemma}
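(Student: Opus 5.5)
We prove that $\phi_{S,d}$ is injective and surjective by comparing global sections of $\CO_X(d)$ with the homogeneous localizations $S(d)_{(f)}$ on the cover of $X$ by the sets $D_+(f)$, $f \in \Gen^D(S)$. Since $S$ is noetherian, $\Gen^D(S)$ is finite. By Corollary~\ref{cor:intersection_and_open_subsets_of_hom_elements}(b), the intersections of these opens are $D_+(fg)$. Proposition~\ref{prop:glue_M_f_qc} then identifies $\Gamma(X,\CO_X(d))$ with the equalizer
\begin{align*}
\Gamma(X,\CO_X(d)) \equ \ker\Big(\prod_{f} S(d)_{(f)} \rightrightarrows \prod_{f,g} S(d)_{(fg)}\Big).
\end{align*}
Here $S(d)_{(f)}$ is the set of fractions $a/f^m$ with $a$ homogeneous of degree $d+m\deg(f)$. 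All these modules sit inside the $D$-graded field of fractions of $S$, and the restriction maps are inclusions because $S$ is a domain. Hence a global section is a single homogeneous element $\sigma$ of degree $d$ in $\Quot(S)$ lying in $\bigcap_{f\in\Gen^D(S)} S_f$. The map $\phi_{S,d}$ becomes the inclusion $S_d \hookrightarrow \bigcap_f (S_f)_d$.

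\emph{Injectivity.} This is immediate. The relevant locus is nonempty: $S_+ \neq 0$ by effectiveness and Corollary~\ref{cor:rel_many_el_deg_zero}. Since $S$ is a domain, $S_d \to (S_f)_d$ is injective for any single relevant $f$.

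\emph{Surjectivity.} This is the main step. Take $\sigma \in \bigcap_f (S_f)_d$ and write it in reduced fractional form $\sigma = a/b$ with $a,b$ homogeneous and coprime. Such a form exists because $S$ is factorially $D$-graded. Each $D$-prime factor $p$ of $b$ must divide every $f\in\Gen^D(S)$, since $a/b \in S_f$ and $a,b$ are coprime. So $p$ lies in every relevant element, and hence $S_+ \subseteq (p)$. I would then argue that this is impossible unless $p$ is a unit. One could argue via $\Proj$ as follows: $(p)$ is $D$-prime, and Corollary~\ref{cor:9.1/15_gen} together with the fact that $V_+(S_+)=\emptyset$ should show that a $D$-prime containing $S_+$ is never in $\Proj_D(S)$, which gives no contradiction directly. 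So instead I would argue combinatorially. The elements of $\Gen^D(S)$ are products of $D$-prime generators whose degrees span a finite-index subgroup of $D$. Because $d\in\supp(D)$, one can pick a relevant generator avoiding any fixed $D$-prime $p$. To do this, replace each factor $p$ in a relevant monomial by another $D$-prime whose degree is linearly dependent with $\deg(p)$ modulo finite index. Such a replacement exists by Corollary~\ref{cor:rel_many_el_deg_zero} applied to $p$ together with a relevant $f$.

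The hardest point is exactly this codimension-two type statement: that no $D$-prime divides all of $\Gen^D(S)$, equivalently that $V_+(S_+)$ contains no height-one $D$-prime. I expect to need the hypothesis $d\in\supp(D)$ together with the factorial grading to rule it out, for instance as follows. If $p$ divided all relevant elements, then $f/p$ would still have a periodic localization for some $f$, since inverting $f/p$ and the remaining units already produces a finite-index group. This would make $f/p$ relevant and contradict minimality of the generating set $\Gen^D(S)$. Granting this, $b$ is a unit in $S_0$, so $\sigma = ab^{-1}\in S_d$. Finally, multiplicativity of $\phi_{S,d}$ is inherited from the identifications above, since all maps are restrictions of the inclusion $S \subseteq \Quot(S)$.
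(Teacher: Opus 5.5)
Your reduction is correct and follows the paper's outline. Since $S$ is a domain and the opens $D_+(f)$, $f\in\Gen^D(S)$, meet in $D_+(fg)$, a global section of $\CO_X(d)$ is a single homogeneous $\sigma=a/b$ of degree $d$, in reduced form, lying in $\bigcap_f S_f$. Injectivity is then clear, and every $D$-prime factor $p$ of $b$ divides every relevant element.

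The gap is the next step: the claim that no non-unit $D$-prime can divide all of $\Gen^D(S)$ is false. Take $S=\BC[x,y,z]$, $D=\BZ^2$, $\deg x=e_1$, $\deg y=\deg z=e_2$. A homogeneous element $x^n q(y,z)$ is relevant if and only if $n>0$ and $\deg q>0$. So $\Gen^D(S)=\{xy,xz\}$, and $x$ divides every relevant element, even though $X\cong\BP^1_\BC$. Both of your justifications fail here: no other $D$-prime has degree in $\BQ e_1$, and $xy/x=y$ is not relevant.

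There is also a structural warning sign. Your claim does not involve $d$, so if it were true it would give $\Gamma(X,\CO_X(d))=S_d$ for \emph{every} $d\in D$. In this example, however, $\Gamma(X,\CO_X(-e_1))=(S_{xy})_{-e_1}\cap(S_{xz})_{-e_1}=\BC\cdot x^{-1}\neq 0=S_{-e_1}$. So the hypothesis $d\in\supp(D)$ must be used exactly at this step, and in your sketch it never genuinely is.

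What is true, and suffices, is a statement about degrees. Let $p$ be a $D$-prime dividing every relevant element. Let $W_p\subseteq D\otimes\BQ$ be the $\BQ$-span of the degrees of all homogeneous units and of all $D$-primes not associate to $p$. Then $\deg p\notin W_p$. Otherwise, by effectiveness and factoriality, the subgroup $G$ generated by these degrees, together with $\deg p$, generates $D$; so $G\otimes\BQ=D\otimes\BQ$ and $G$ has finite index. Finitely many of the generators already generate $G$, and the product of the corresponding $D$-primes is then a relevant element not divisible by $p$, a contradiction.

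Now choose a linear form $\lambda_p$ on $D\otimes\BQ$ with $\lambda_p|_{W_p}=0$ and $\lambda_p(\deg p)=1$. Unique factorization gives $\lambda_p(\deg c)=v_p(c)\ge 0$ for every nonzero homogeneous $c$. Hence $\lambda_p(d)\ge 0$, because $S_d\neq 0$; this is where $d\in\supp(D)$ enters. On the other hand $\lambda_p(d)=v_p(a)-v_p(b)$, and coprimality of $a$ and $b$ then forces $v_p(b)=0$. Applying this to each $D$-prime factor of $b$ shows $b$ is a homogeneous unit, so $\sigma=ab^{-1}\in S_d$.

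The paper settles this step with a short factorization argument that likewise does not make the role of $d\in\supp(D)$ visible. The valuation argument above is what actually closes it.
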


\begin{proof}
        Let $s\in S_d$ be nonzero and $f \in \Gen^D(S)$, so that $(s, f)$ admits an element of degree $d$. As we allow $(s, f)$ to be $\frac{s}{1}$, such an $f$ exists by Corollary~\ref{cor:rel_many_el_deg_zero}. Therefore the image of $s$ in $\Gamma(D_+(f), \CO_X(d))$ in nonzero and hence also the image of $s$ in $\Gamma(X, \CO_X(d))$. So let $\varphi_{S, d} = 0$. Then $s\restrict_{D_+(f)} = 0$ in $S_{(f)}(d)$ for all $f \in \Gen^D(S)$. As the $D_+(f)$ cover $\Proj^D(S)$ and $S$ is integral, $s = 0$ in $S_f$ for all $f \in \Gen^D(S)$ forces $s = 0$ and $\varphi_{S, d}$ is injective.

        In order to proof surjectivity, take $\sigma \in \Gamma(X, \CO_X(d))$. Then $\sigma|_{D_+(f)} \in \Gamma(D_+(f), \CO_X(d))$ for all $f \in \Gen^D(S)$. Since $\Gamma(D_+(f), \CO_X(d)) \cong S(d)_{(f)}$ (cf.\  \cite{May}, Proposition 5.8), there exists an element $s_f \in S_{d+l_f \deg(f)}$ for all $f \in \Gen^D(S)$, such that $\sigma_f=\frac{s_f}{f^{l_f}}$ has degree $d$. 
        Since these representations have to be compatible on $D_+(fg)$ for all $f, g \in \Gen^D(S)$ and $S$ is factorially graded, we may assume that these local sections glue to the section $\sigma = \frac{a}{b}$, where $b$ divides some relevant element, $a \in S_{\deg(b) + d}$ and $\sigma$ is in reduced fractional form. If $b$ is not a unit in $S$, we may factorize it into $D$-prime factors, say $b = gh$. Since $\sigma$ is a nonzero section, the restriction to $D_+(g)$ is also nonzero. By factoriality, we may assume that the numerator of the restriction of $D_+(g)$ multiplied by $h$ equals $a$, hence $h \mid a$. This contradicts the reduced fractional form, so $b$ was a unit to begin with and $\deg(\sigma) = \deg(ab^{-1}) = d$, i.e.\ $\sigma = \phi_{S,d}(ab^{-1})$.
\end{proof}

\subsection{Freeness and reflexivity of Serre twists}

If we take $S$ to be arbitrary and allow torsion in $D$, we are facing two big differences (cf.\  \cite{Cox}, page 12):
\begin{enumerate}[label=(\arabic*)]
    \item $\CO_X(d)$ may not be a line bundle.
    \item $\phi_{d,e}$ might not be an isomorphism.
\end{enumerate}

However, we can detect if $\CO_X(d)$ is a line bundle surprisingly simply, at least for toric $X$. Since $X$ is then $\BQ$-factorial by \cite{paper4}, Corollary 2.33. In particular, all $\CO_X(d)$ are invertible.

\begin{proposition}\label{prop:O_X(d)_inv_free_mod}
    Let $X = \Proj^D(S)$, $d \in D$. Then the following are equivalent:
    \begin{enumerate}[label=(\alph*)]
        \item $\CO_X(d)$ is free as an $\CO_X$-module.
        \item For all relevant $f \in S$, $S(d)_{(f)}$ is a free $S_{(f)}$-module of rank one and $S_f$ contains a unit $u_f$ in degree $d$. 
        \item For all relevant $f \in S$, $S_f$ contains a unit $u_d$ in degree $d$ such that multiplication with $u_d$ defines an isomorphism $S_{(f)} \to S(d)_{(f)}$ of $S_{(f)}$-algebras.
    \end{enumerate}
    If $S$ is noetherian and factorially graded, (b) and (c) simplify, so that the conditions only have to be checked for all $f \in \Gen^D(S)$.
\end{proposition}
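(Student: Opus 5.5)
The plan is to prove the cycle (a) $\Rightarrow$ (b) $\Rightarrow$ (c) $\Rightarrow$ (a), working locally on the affine cover $\{D_+(f)\}_{f \in \Rel^D(S)}$. By Proposition~\ref{prop:glue_M_f_qc} we have $\Gamma(D_+(f), \CO_X(d)) = S(d)_{(f)}$, and by Theorem~\ref{thm:Proj_D=Proj^D} $D_+(f) \cong \Spec(S_{(f)})$. So all three conditions can be phrased on the affine pieces.

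For (a) $\Rightarrow$ (b), suppose $\CO_X(d) \cong \CO_X$, reading ``free'' as free of rank one, which is forced by generic rank on the integral scheme $X$. Restricting to $D_+(f)$ gives $S(d)_{(f)} \cong S_{(f)}$ as $S_{(f)}$-modules. Let $u_f/1$ be the image of $1$; it is an element of $(S_f)_d$ generating $S(d)_{(f)}$. To see that $u_f$ is a unit of $S_f$, I would apply the same argument to $-d$. Using the canonical map $\phi_{d,-d}$ from (\ref{eq:O(d)_O(e)_to_O(d+e)}), I would show that $u_f \cdot S(-d)_{(f)} = S_{(f)}$. The key input is that $S(d)_{(f)}$ generates the unit ideal: since $f$ is relevant, Corollary~\ref{cor:rel_many_el_deg_zero} produces homogeneous $a \in (S_f)_d$ whose product with a suitable element of degree $-d$ in $S_f$ gives $1$. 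In fact I expect this to show directly that $d$ lies in $D^f$ once $S(d)_{(f)}$ is invertible.

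For (b) $\Rightarrow$ (c), take the unit $u_f \in (S_f)_d$. Multiplication by $u_f$ maps $S_{(f)} = (S_f)_0$ bijectively onto $(S_f)_d = S(d)_{(f)}$, with inverse given by multiplication by $u_f^{-1}$. This map is $S_{(f)}$-linear, which is the intended meaning of an isomorphism of $S_{(f)}$-algebras here. Note that (c) $\Rightarrow$ (b) is immediate by the same observation, so (b) and (c) are equivalent.

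For (c) $\Rightarrow$ (a), I would build a global trivialization. The main obstacle is that the local units $u_f$ need not glue: on $D_+(fg)$ the transition $u_f/u_g$ is a unit in $S_{(fg)}$, and (c) alone only gives local freeness, i.e.\ a line bundle. My first attempt would be to choose all the $u_f$ compatibly, by picking a single homogeneous element of $\Quot(S)$ of degree $d$ that is a unit in every $S_f$. Concretely, I would take $u = a/b$ in reduced fractional form. Then, using factoriality exactly as in the surjectivity part of Lemma~\ref{lem:section_S_d_nice}, I would argue that the numerator and denominator of $u$ consist of $D$-prime factors dividing every relevant $f$. If no such global $u$ exists, I would expect the statement to need ``locally free'' in place of ``free''. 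I would then fall back to proving (a) in the sense that $\CO_X(d)$ is an invertible module trivialized on each $D_+(f)$, and flag this point.

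Finally, for the simplification: when $S$ is noetherian and factorially graded, every relevant $f$ is divisible by some $f_0 \in \Gen^D(S)$. Then $S_f$ is a localization of $S_{f_0}$, and a unit of degree $d$ in $S_{f_0}$ remains a unit in $S_f$. Moreover, the sets $D_+(f_0)$ cover $X$ by Corollary~\ref{cor:intersection_and_open_subsets_of_hom_elements}(c). So it suffices to check (b) and (c) on $\Gen^D(S)$.
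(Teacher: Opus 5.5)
Your overall route (work on the cover by the $D_+(f)$, and use that multiplication by a homogeneous unit of degree $d$ identifies $(S_f)_0$ with $(S_f)_d$, so (b)$\Leftrightarrow$(c)) is the same as the paper's. The genuine gap is in (a)$\Rightarrow$(b), at the step where the generator $u_f$ of $S(d)_{(f)}$ is shown to be a unit of $S_f$. This step fails and cannot be repaired. Corollary~\ref{cor:rel_many_el_deg_zero} only produces degree-zero fractions after passing to a power $h^N$. Relevance of $f$ gives $Nd\in D^f$ for some $N>0$, not $d\in D^f$, and freeness of $S(d)_{(f)}$ does not force $N=1$. Take $S=\BC[x,y]$ with $\deg x=1$, $\deg y=2$, and $d=1$. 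Then $S(1)_{(x)}=x\cdot S_{(x)}$ and $S(1)_{(y)}=x\cdot\BC[x^2/y]=x\cdot S_{(y)}$, so the global section $x$ trivializes $\CO_X(1)$ and (a) holds. However, the homogeneous units of $S_y$ are the $cy^m$, so $D^y=2\BZ\not\ni 1$ and (b) fails at the relevant element $y\in\Gen^D(S)$. Accordingly $x\cdot S(-1)_{(y)}=\frac{x^2}{y}S_{(y)}\neq S_{(y)}$, so the identity you wanted to extract from $\phi_{d,-d}$ is false. Restricting (a) only yields that each $S(d)_{(f)}$ is free of rank one; the unit condition in (b) is strictly stronger. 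The paper's one-line justification (`by the definition of a line bundle') passes over exactly this point, so this is not an idea you missed: the implication is wrong as stated. By the same example, the `only if' part of Corollary~\ref{thm:O(d)_free} also fails, since $D^x\cap D^y=2\BZ$.

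Your doubt about (c)$\Rightarrow$(a) is justified and matches what the paper actually proves. Its argument only notes that the $u_f/u_g$ are units in $S_{(fg)}$, i.e.\ that $\CO_X(d)$ is a line bundle. A global $u=a/b$ as you hope for need not exist: for $\BC[x,y]$ with standard grading and $d=1$, every relevant $f$ is a product of linear forms, so (c) holds, while $\CO_{\BP^1}(1)$ is not free. So your fallback of reading (a) as `invertible' is precisely the paper's proof of that direction. Finally, your reduction to $\Gen^D(S)$ rests on a false claim, namely that every relevant $f$ is divisible by some element of $\Gen^D(S)$. In $S=\BC[x,y,z]$ with standard grading one has $\Gen^D(S)=\{x,y,z\}$, and $f=x^2+y^2+z^2$ is relevant but divisible by none of them. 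For $d=1$ the conditions hold on $\Gen^D(S)$ but fail at $f$, since $D^f=2\BZ$ and $S(1)_{(f)}$ is not even free ($\CO(1)$ generates $\Pic(\BP^2\setminus V(f))\cong\BZ/2\BZ$). What the covering by the $D_+(f_0)$, $f_0\in\Gen^D(S)$, legitimately gives is only that the conditions on $\Gen^D(S)$ imply invertibility of $\CO_X(d)$. The paper gives no argument for the simplification either.
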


\begin{proof}
$(a) \implies (b):$ Let $\CO_X(d)$ be free. As $\Proj^D(S)$ is covered by the set of all $\Spec(S_{(f)})$ for relevant $f \in S$, the claim follows by the definition of a line bundle.

$(b) \implies (a):$ We clearly have the following trivialization data for $L = \CO_X(d)$: Take $U_f = \Spec(S_{(f)})$ and $\psi_f$ to be the isomorphism induced by the isomorphism $S_{(f)} \to S(d)_{(f)}$ that is given my multiplication with $u_f$. Then by construction, $\frac{u_f}{u_g}$ is a unit in $S_{(fg)}$ and $\CO_X(d)$ is a line bundle.
The equivalence of $(b)$ and $(c)$ is trivial, as $(b)$ means that $S(d)_{(f)}$ is a free $S_{(f)}$-module of rank $1$ with basis $u_f$.
\end{proof}

The following corollary directly follows from \cite{paper1}, Theorem 2.7.

\begin{corollary}\label{thm:O(d)_free}
    Let $S$ be an integral factorially graded domain, $d\in D$ and $(f_i)_{i\in I}$ a system of generators for $S_+$. 
    Then $\CO_X(d)$ is free if and only if $d \in \bigcap_{i \in I} D^{f_i}$.
\end{corollary}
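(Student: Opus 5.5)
The plan is to reduce the statement to the local criterion of Proposition~\ref{prop:O_X(d)_inv_free_mod}. I would then show, chart by chart, that condition (c) for a relevant $f$ is equivalent to $\deg$-membership $d \in D^{f}$.

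\emph{Reduction to the generators.} Since the $f_i$ generate $S_+$, the last claim of Proposition~\ref{prop:hom_top} shows that the sets $D_+(f_i)$ cover $X$. Via Theorem~\ref{thm:Proj_D=Proj^D} this is the cover by the $\Spec(S_{(f_i)})$.

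For an arbitrary relevant $f$, Corollary~\ref{cor:intersection_and_open_subsets_of_hom_elements}(b) gives
\[ D_+(f)=\bigcup_i D_+(ff_i), \qquad \Spec(S_{(ff_i)})=D_+(f)\cap D_+(f_i). \]
I also use $D^{f_i}\subseteq D^{ff_i}$, which holds because units of $S_{f_i}$ stay units in $S_{ff_i}$. So I would apply the criterion of Proposition~\ref{prop:O_X(d)_inv_free_mod} in the form used in its proof of (b)$\Rightarrow$(a). That form only requires trivializing units $u_i$ on the cover $\{D_+(f_i)\}$, with transition functions $u_i/u_j\in S_{(f_if_j)}^\times$. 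Hence it suffices to prove, for each single relevant $f=f_i$: $S_f$ has a unit $u$ of degree $d$ with $S_{(f)}\xrightarrow{\cdot u}S(d)_{(f)}$ an isomorphism if and only if $d\in D^f$.

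\emph{The easy direction.} Suppose $d\in D^{f}$, so there is a homogeneous unit $u\in S_f$ of degree $d$. Multiplication by $u$ maps degree-$0$ elements of $S_f$ to degree-$d$ elements, that is, $S_{(f)}\to S(d)_{(f)}$. Multiplication by $u^{-1}$ is an inverse, so this is an isomorphism of $S_{(f)}$-modules. Taking $u_i$ to be such a unit for each $f_i$ yields (c), and hence freeness by Proposition~\ref{prop:O_X(d)_inv_free_mod}.

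\emph{The converse, which I expect to be the main obstacle.} Suppose $\CO_X(d)$ is free. By Proposition~\ref{prop:O_X(d)_inv_free_mod}(b), $S(d)_{(f)}$ is free of rank one with some generator $\sigma$. Using factoriality, write $\sigma=a/b$ in reduced fractional form, with $b$ a product of $D$-prime divisors of $f$. I claim that $a$ is a product of $D$-primes dividing $f$ times a unit, so that $\sigma$ is a homogeneous unit of $S_f$ of degree $d$ and $d\in D^f$.

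Suppose instead that some $D$-prime $p\mid a$ has $p\nmid f$. Then $(p)$ is a $D$-prime ideal in $D_+(f)$. By Propositions~\ref{prop:psi_f_injective} and~\ref{cor:psi_f_surj} it corresponds to a point $\Fq=\psi_f((p))$, and every element of $S(d)_{(f)}=S_{(f)}\sigma$ lies in $(p)S_f$. To contradict this, I would produce an element of degree $d$ in $S_f$ whose numerator is not divisible by $p$. For this I apply Corollary~\ref{cor:rel_many_el_deg_zero} to homogeneous elements of degree $d$, coprime to $p$, multiplied by suitable products of divisors of $f$.

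The delicate point is that the Corollary only controls multiples $N\cdot d$ modulo $D^f$. I would handle it by observing that, since $f$ is relevant, $D/D^f$ is finite. Moreover, a generator of the invertible module $S(d)_{(f)}$ must satisfy $\sigma^{N}\in u\cdot S_{(f)}^\times$ for a unit $u$ of degree $Nd$. Taking the reduced form of $\sigma^N$ then forces the prime $p$ out of the numerator, because $p$ is a $D$-prime not dividing $f$ and so can never appear in a homogeneous unit of $S_f$. This contradiction shows $a$ has no such prime factor, which gives $d\in D^{f_i}$ for every $i$ and completes the equivalence.
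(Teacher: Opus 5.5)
\textbf{Gap in the $\Rightarrow$ direction.} Your $\Leftarrow$ direction is the paper's argument: a unit $u_i\in S_{f_i}$ of degree $d$ makes $q\mapsto u_iq$ an isomorphism $S_{(f_i)}\to S(d)_{(f_i)}$ on each chart. The $\Rightarrow$ direction, however, rests on a false claim. You keep only the part of Proposition~\ref{prop:O_X(d)_inv_free_mod}(b) saying that $S(d)_{(f)}$ is free of rank one. From this you try to show that a generator $\sigma$ must be a homogeneous unit of $S_f$.

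Here is a counterexample. Take $S=\BC[x,y]$ with $D=\BZ$, $\deg x=1$, $\deg y=2$. Then $S_+=(x,y)$, $D^x=\BZ$ and $D^y=2\BZ$. For $f=y$ and $d=1$, every element of $S_{1+2k}$ is divisible by $x$. Hence $S(1)_{(y)}=x\cdot S_{(y)}=x\cdot\BC[x^2/y]$ is free with generator $\sigma=x$, even though $x\nmid y$ and $1\notin D^{y}$.

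Both steps of your contradiction argument fail here:
\begin{enumerate}
\item The degree-$d$ element with numerator prime to $p$ that you want to produce does not exist. Corollary~\ref{cor:rel_many_el_deg_zero} only reaches some multiple $Nd$, never $d$ itself.
\item Your patch $\sigma^N\in u\cdot S_{(f)}^\times$ also fails: $x^2=y\cdot(x^2/y)$, and $x^2/y$ is not a unit of $\BC[x^2/y]$. This is exactly the failure of $\phi_{d,e}$ to be an isomorphism that the paper warns about.
\end{enumerate}
Worse, $x$ generates $\CO_X(1)$ on both $D_+(x)$ and $D_+(y)$ with transition function $1$. So $\CO_X(1)\cong\CO_X$, while $1\notin D^x\cap D^y$. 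No argument that starts only from freeness of the modules $S(d)_{(f)}$ can produce $d\in D^{f}$.

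\textbf{How the paper argues.} The paper's $\Rightarrow$ direction uses Proposition~\ref{prop:O_X(d)_inv_free_mod} in full. Its condition (b), equivalently (c), explicitly includes that $S_f$ contains a unit $u_f$ of degree $d$. For $f=f_i$ this is literally the statement $d\in D^{f_i}$.

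So the fix is to quote that unit clause and drop the reduced-fraction argument. The example above shows that the corollary holds only with \emph{free} understood in the sense of that proposition: trivialized on each $D_+(f)$ by a homogeneous unit of degree $d$. It does not hold for plain module-theoretic freeness of $\CO_X(d)$.
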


\begin{proof}
    \Ra By Proposition~\ref{prop:O_X(d)_inv_free_mod} and factoriality, we know that $S(d)_{(f_i)}$ contains a unit $u_{f_i}$ in degree $d$, which is equivalent to $d \in \bigcap_{i \in I} D^{f_i}$.

    \La Conversely, let $d$ be contained in the intersection of all $D^{f_i}$. Then there must exist a unit $u_i \in S_{f_i}$ of degree $d$ for all $i \in I$ and thus, the map $S_{(f_i)} \to S(d)_{(f_i)}$ given by $q \mapsto u_i q$ is an isomorphism and turns $S(d)_{(f_i)}$ into a free $S_{(f_i)}$-module of rank 1.
\end{proof}

\begin{example}\label{ex:invertible}
We consider the graded rings from Example~\ref{ex:ush_sep1} again.
    In (1), every relevant element is strongly relevant, so $D^f = D$ and hence, all $\CO_X(d)$ for $d \in D$ are invertible.

    For (2) we compute that $D^x \cap D^z = \BZ \cap (\BZ \times \BZ/2\BZ) = 2 \BZ \times \{0\}$, which shows that $\CO_X(d)$ is free if and only if $d = k (2,0)$ for $k \in \BZ$. 
    However, as $\CO_X(0,1)^{\otimes 2} = \CO_X(0, 0) = \CO_X$, it is also invertible. Hence, $\CO_X(1,1)$ and $\CO_X(1,0)$ are also invertible. In particular, choosing $O_X(1,0)$ and $\CO_X(1,1)$ as members of the ample family, we recover the relevant locus $S_+ = (x, z)$ and hence its Proj. Note, again, that this choice is not canonical.
\end{example}

In the remaining section, we will determine when $\CO_X(d)$ is torsion-free and reflexive.

\begin{definition}\label{def:reflexive_sheaf}
    Let $X$ be a scheme and $\CF$ a coherent sheaf on $X$ and consider the dual sheaf $\CF^\vee = \CH\text{om}_{\CO_X}(\CF, \CO_X)$. Let $\CG$ be an $\CO_X$-module.
    \begin{enumerate}[label=(\arabic*)]
        \item We say that $\CF$ is \emph{reflexive}, if the natural map $\CF \to (\CF^\vee)^\vee$ is an isomorphism.
        \item $\CG$ is called \emph{torsion-free}, if for all $U \subseteq X$ and all $0 \neq r \in \CO_X(U)$ and all $0 \neq z \in \CG(U)$ it holds $rz \neq 0$.
    \end{enumerate}    
\end{definition}

\begin{proposition}
Let $S$ be an integral $D$-graded domain, $d \in D$. Then
    \begin{enumerate}[label=(\alph*)]

        \item $\CO_X(d)$ is torsion-free of generic rank $1$ 
    \end{enumerate}
    Let $S$ also be noetherian. Then
    \begin{enumerate}

        \item[(b)] $\CO_X(d)^\vee$ and $(\CO_X(d)^\vee)^\vee$ are torsion-free. 

        \item[(c)] the natural map 
    \begin{align*}
        \alpha_d\colon \CO_X(d) \to (\CO_X(d)^\vee)^\vee
    \end{align*}
    is injective.

        \item[(d)] $\CO_X(d)^\vee$ and $(\CO_X(d)^\vee)^\vee$ are reflexive.
    \end{enumerate}
\end{proposition}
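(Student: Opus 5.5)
We work chart by chart on the affine cover $X=\bigcup_{f\in\Rel^D(S)}\Spec(S_{(f)})$. Torsion-freeness is local, and so are the questions of whether a map is injective and whether a sheaf is reflexive. On the chart $U_f=\Spec(S_{(f)})$ we have $\Gamma(U_f,\CO_X(d))=S(d)_{(f)}=(S_f)_d$. Since $S$ is a domain, so are $S_f$ and $S_{(f)}$. Moreover $X$ is integral: it is irreducible because the intersections $\Spec(S_{(fg)})$ of two charts are nonempty, as $S_{(fg)}\neq 0$ for nonzero $f,g$.

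\emph{Step (a).} The module $(S_f)_d$ sits inside the field $\Quot(S)$, and the action of $S_{(f)}$ on it is multiplication in that field. So $rz=0$ with $r\neq 0$ forces $z=0$, which gives torsion-freeness on each chart. It propagates to every open $U$ because sections on $U$ embed into the product of their restrictions to the $U\cap U_f$. Each of those is a localization of a chart and hence still sits inside $\Quot(S)$.

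For the generic rank, we tensor with $K=\Quot(S_{(f)})$. If $(S_f)_d\neq 0$, pick $0\neq u\in(S_f)_d$; every element $z$ satisfies $z=(z/u)\,u$ with $z/u$ of degree $0$ in $\Quot(S)$. We then need $z/u\in K$, i.e.\ every degree-zero fraction is a quotient of elements of $S_{(f)}$. We argue this by clearing denominators with a suitable power of $f$ times a homogeneous element, using Corollary~\ref{cor:rel_many_el_deg_zero}. So $(S_f)_d\otimes K=K$ and the rank is $1$. If $(S_f)_d=0$ for some $f$, the rank degenerates. Here we use relevance: some power $N$ gives $Nd\in D^f$, so $N$-fold multiples are nonzero, and we argue via $d\in\supp(D)$ or simply that effectivity makes $(S_f)_d\neq 0$. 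This is the point I expect to need care, and I would first try to show $(S_f)_d\neq0$ directly from Corollary~\ref{cor:rel_many_el_deg_zero}.

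\emph{Step (b).} For the noetherian case we use Lemma~\ref{lem:O_X_coherent}: $\CF=\CO_X(d)$ is coherent, so $\CF^\vee=\CH\text{om}(\CF,\CO_X)$ is coherent. The dual of any coherent sheaf on an integral scheme is torsion-free. Indeed, if $r\varphi=0$ with $r\neq 0$, then $r\varphi(z)=0$ in the domain $\CO_X(U)$, so $\varphi(z)=0$ for all $z$. The same argument applied to $\CF^\vee$ gives torsion-freeness of $(\CF^\vee)^\vee$.

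\emph{Steps (c) and (d).} For (c), the kernel of $\alpha_d$ consists of sections $z$ killed by every functional. By (a), $\CF$ is torsion-free of rank $1$, and locally $(S_f)_d\hookrightarrow K$ via $z\mapsto z/u$. Clearing denominators of a finite generating set, we get a nonzero $S_{(f)}$-linear map $(S_f)_d\to S_{(f)}$, multiplication by a suitable $c/u$, that is injective. Hence no nonzero $z$ lies in the kernel.

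For (d), we cite the standard fact that the dual of a coherent sheaf on a noetherian integral scheme is reflexive: $\CG^\vee\to\CG^{\vee\vee\vee}$ has left inverse $(\alpha_\CG)^\vee$, so it is split injective. Equality then follows by comparing ranks and torsion-freeness at generic points together with the standard normal-ish argument. If normality is unavailable, we use the purely formal identity $\alpha_{\CG^\vee}\circ$ composed with $(\alpha_\CG)^\vee$. Applying this with $\CG=\CF$ and $\CG=\CF^\vee$ yields reflexivity of both $\CF^\vee$ and $\CF^{\vee\vee}$. The main obstacle is this step (d): the formal identity gives only split injectivity, and surjectivity may genuinely need a normality or depth hypothesis. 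I would try to obtain that hypothesis from factoriality of the grading, which makes each $S_{(f)}$ normal.
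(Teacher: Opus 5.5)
Your parts (a)--(c) are sound, and more self-contained than the paper. The paper proves torsion-freeness in (a) by the same embedding into $\Quot(S)$, and for (b)--(d) it simply cites Schwede's Lemma 2.4, Lemma 2.5 and Theorem 2.8, using that $X$ is noetherian and integral and that $\CO_X(d)$ is coherent. Your generic-rank computation over $\kappa(\eta)=\Quot(S_{(f)})$ is the correct one; the paper's identification of the fibre with $\Quot(S)$ is imprecise. The point you flagged closes as you suspected. By effectivity, $d=\sum_i n_i\deg(s_i)$ with $0\neq s_i\in h(S)$. Each $-\deg(s_i)$ is attained in $S_f$ by $s_i^{N-1}v$, where $v$ is a homogeneous unit of degree $-N\deg(s_i)$, which exists for suitable $N$ by relevance of $f$. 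So the support of the domain $S_f$ is all of $D$, and $(S_f)_d\neq 0$.

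The genuine gap is (d). You begin by citing the right standard fact, which is all the paper does. But your own justification stops at split injectivity of $\alpha_{\CG^\vee}$, via $(\alpha_\CG)^\vee\circ\alpha_{\CG^\vee}=\mathrm{id}_{\CG^\vee}$. For surjectivity you fall back on a ``normal-ish'' comparison of generic ranks, or on normality of $S_{(f)}$ coming from factoriality. Neither route works:
\begin{itemize}
\item A generic isomorphism between torsion-free sheaves need not be an isomorphism. The ideal sheaf of a closed point on a smooth surface maps to its double dual $\CO_X$ in exactly that way.
\item The proposition does not assume $S$ factorially graded, so normality is not available.
\end{itemize}
Normality is also unnecessary. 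What is missing is injectivity of $(\alpha_\CG)^\vee$. At the generic point, $\alpha_\CG$ is $V\to V^{**}$ for a finite-dimensional $\kappa(\eta)$-vector space, hence an isomorphism. So $C=\operatorname{coker}(\alpha_\CG)$ is a coherent torsion sheaf. Applying the left exact functor $\CH\text{om}(-,\CO_X)$ to $\CG\to\CG^{\vee\vee}\to C\to 0$ gives an exact sequence $0\to C^\vee\to\CG^{\vee\vee\vee}\to\CG^\vee$, whose last map is $(\alpha_\CG)^\vee$. Here $C^\vee=0$: take a local $\varphi\colon C\to\CO_X$ and a local section $c$ killed by some $r\neq 0$; then $r\varphi(c)=\varphi(rc)=0$ in a domain, so $\varphi(c)=0$. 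Hence $(\alpha_\CG)^\vee$ is injective. Since it has $\alpha_{\CG^\vee}$ as a right inverse, it is an isomorphism, and therefore so is $\alpha_{\CG^\vee}$.

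This argument works on any noetherian integral scheme; it is the fact the paper invokes through Schwede's Theorem 2.8 (Hartshorne's result that duals of coherent sheaves are reflexive). Applying it with $\CG=\CF$ and with $\CG=\CF^\vee$ gives both halves of (d).
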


\begin{proof}
    \begin{enumerate}[label=(\alph*)]
        \item We observe that $\CO_X(d)$ has torsion if and only if there exists some relevant $f \in S$ and non-zero elements $r \in S_{(f)}$, $z \in S(d)_{(f)}$ such that $rz = 0$. But this multiplication lives inside $\Quot(S)$, so there cannot exist such elements $r, z$ for no relevant $f \in S$. Hence $\CO_X(d)$ is torsion-free.
    Let $\eta = (0)$ be the generic point of $X$. Then 
    \begin{align*}
        \CO_X(d) \otimes \kappa(\eta) \equ S(d) \otimes_S \Quot(S) \cong \Quot(S),
    \end{align*}
    so $\CO_X(d)$ has fiber of dimension one over $\Quot(S)$.

    \item As $\CO_X(d)$ is coherent by Lemma~\ref{lem:O_X_coherent} and $\Proj^D(S)$ is integral (by \cite{KU}, Lemma 3.6) and noetherian, the claim follows by \cite{Schwede}, Lemma 2.4.

    \item As $\CO_X(d)$ is torsion-free by (a), we can apply \cite{Schwede}, Lemma 2.5.    

    \item Again, since $\CO_X(d)$ is coherent and $X$ is noetherian and integral, we can apply \cite{Schwede}, Theorem 2.8.
    \end{enumerate}
\end{proof}

\begin{remark}
    \begin{enumerate}
        \item[(1)] Note that by \cite{Har94}, Theorem 1.9 (also see \cite{Schwede}, Theorem 2.10), $\CO_X(d)$ is reflexive if and only if it is $S_2$, for $S$ being an integral $D$-graded noetherian and normal domain.
        \item[(2)] It is well-known that any finite locally free $\CO_X$-module is reflexive. In particular, this applies for $S$ being an integral $D$-graded noetherian domain such that $\CO_X(d)$ is finite locally free, so that $\CO_X(d)$ is also reflexive.
    \end{enumerate}
\end{remark}

\end{document}